\setlist[1]{left=0pt .. \parindent}
\setlist{itemindent=.5em}
\newsavebox{\@brx}
\newcommand{\llangle}[1][]{\savebox{\@brx}{\(\m@th{#1\langle}\)}%
  \mathopen{\copy\@brx\kern-0.5\wd\@brx\usebox{\@brx}}}
\newcommand{\rrangle}[1][]{\savebox{\@brx}{\(\m@th{#1\rangle}\)}%
  \mathclose{\copy\@brx\kern-0.5\wd\@brx\usebox{\@brx}}}
\newtheorem{theorem}{Theorem}
\newtheorem{lemma}[theorem]{Lemma}
\newtheorem{proposition}[theorem]{Proposition}
\theoremstyle{definition}
\newtheorem{definition}[theorem]{Definition}
\theoremstyle{remark}
\newtheorem{remark}[theorem]{Remark}
\numberwithin{theorem}{section}
\numberwithin{equation}{section}
\newcommand{\mollifytime}[2]{[\![ #1 ]\!]_{#2}}
\newcommand{\N}{\mathbb{N}}
\newcommand{\R}{\mathbb{R}}
\renewcommand{\d}{\mathrm{d}}
\newcommand{\dx}{\mathrm{d}x}
\newcommand{\dt}{\mathrm{d}t}
\newcommand{\ds}{\mathrm{d}s}
\newcommand{\eps}{\varepsilon}
\renewcommand{\epsilon}{\varepsilon}
\renewcommand{\rho}{\varrho}
\renewcommand{\u}{\boldsymbol{u}}
\newcommand{\g}{\boldsymbol{g}}
\newcommand{\muplus}{\boldsymbol{\mu}^+}
\newcommand{\muminus}{\boldsymbol{\mu}^-}
\newcommand{\mupm}{\boldsymbol{\mu}^\pm}
\newcommand{\bomega}{\boldsymbol{\omega}}
\DeclareMathAlphabet{\mathpzc}{OT1}{pzc}{m}{it}
\DeclareMathOperator{\spt}{spt}
\DeclareMathOperator{\dist}{dist}
\DeclareMathOperator*{\osc}{osc}
\DeclareMathOperator{\Div}{div}
\DeclareMathOperator*{\esssup}{sup}
\DeclareMathOperator{\loc}{loc}
\def\Xint#1{\mathchoice
    {\XXint\displaystyle\textstyle{#1}}%
    {\XXint\textstyle\scriptstyle{#1}}%
    {\XXint\scriptstyle\scriptscriptstyle{#1}}%
    {\XXint\scriptscriptstyle\scriptscriptstyle{#1}}%
    \!\int}
\def\XXint#1#2#3{\setbox0=\hbox{$#1{#2#3}{\int}$}
    \vcenter{\hbox{$#2#3$}}\kern-0.5\wd0}
\def\bint{\Xint-}
\def\dashint{\Xint{\raise4pt\hbox to7pt{\hrulefill}}}
\def\XXiint#1#2#3{\setbox0=\hbox{$#1{#2#3}{\iint}$}
    \vcenter{\hbox{$#2#3$}}\kern-0.5\wd0}
\subjclass[2020]{35K86, 35K20, 35K65, 35K67, 35D30}
\keywords{obstacle problem, porous medium equation, sign-changing solutions, Cauchy-Dirichlet problem, boundary regularity}
\begin{document}
%
%

\title[Boundary regularity for obstacle problems]{Continuity up to the boundary for obstacle problems to porous medium type equations}

\author[K. Moring]{Kristian Moring}
\address{Kristian Moring\\
	Fakult\"at f\"ur Mathematik, Universit\"at Duisburg-Essen\\
	Thea-Ley\-mann-Str.~9, 45127 Essen, Germany}
\email{kristian.moring@uni-due.de}

\author[L. Sch\"atzler]{Leah Sch\"atzler}
\address{Leah Sch\"atzler \\ 
Fachbereich Mathematik, Paris-Lodron-Universit\"at Salzburg \\
Hellbrunner Str~34, 5020 Salzburg, Austria}
\email{leahanna.schaetzler@plus.ac.at}

\begin{abstract}
We show that signed weak solutions to obstacle problems for porous medium type equations with Cauchy-Dirichlet boundary data are continuous up to the parabolic boundary, provided that the obstacle and boundary data are continuous.
This result seems to be new even for signed solutions to the (obstacle free) Cauchy-Dirichlet problem to the singular porous medium equation, which is retrieved as a special case.
\end{abstract}
\maketitle

\section{Introduction}
In the present paper, we consider the obstacle problem to partial differential equations whose prototype is the porous medium equation (PME)
$$
	\partial_t \big( |u|^{q-1} u \big) - \Delta u = 0,
$$
where $q \in (0,\infty)$. We refer to $0<q<1$ as the degenerate case, and to $ q > 1$ as the singular case. In the literature, the singular porous medium equation is often called the fast diffusion equation. For the standard theory of equations of this type, see e.g.~\cite{Vazquez_PME,Vazquez_FDE,DGV,DK,WZY}. More specifically, we consider problems of the form
\begin{align} \label{eq:pde}
\left\{
\begin{array}{rl}
\partial_t \big( |u|^{q-1} u \big)- \Div \mathbf{A}(x,t,|u|^{q-1} u, \nabla u) = 0 &\text{ in } \Omega_T, \vspace{1mm}\\
u \geq \psi &\text{ in } \Omega_T, \vspace{1mm}\\
u=g &\text{ on } S_T, \vspace{1mm}\\
u=g_o &\text{ on } \Omega \times \{0\},
\end{array}
\right. 
\end{align}
where $\mathbf{A}$ is a vector field satisfying the assumptions specified in Section~\ref{subsec:def-sol}, and $S_T = \partial \Omega \times (0,T)$ denotes the lateral boundary of $\Omega_T$.

In nonlinear potential theory, the obstacle problem is used as a standard tool. Supersolutions, which are defined in potential theory via a parabolic comparison principle, are connected to weak supersolutions in certain ways. In particular, they are typically approximated by weak supersolutions. Such approximants can be constructed via successive obstacle problems, in which regularity up to the boundary plays an important role, see e.g.~\cite{HKM} for the elliptic case and~\cite{KL-crelle} for the porous medium equation.

Local H\"older continuity for nonnegative solutions to the (obstacle free) porous medium equation was proven by DiBenedetto and Friedman in~\cite{DF} in the degenerate case. For the corresponding result in the singular case see~\cite{DGV}. Signed solutions to the porous medium type equations were treated in~\cite{Liao}.

In presence of an obstacle, local H\"older continuity was shown for quasilinear equations in~\cite{StruweVivaldi}, and problems of quadratic growth were treated in~\cite{Choe}. For the porous medium equation, an analogous result was shown in~\cite{BLS_holder} in the degenerate regime in case of nonnegative obstacles. The corresponding result in the singular case was proven in~\cite{Cho_Scheven}, where also more general equations of porous medium type were included. The local H\"older continuity in case of signed obstacles has been treated in~\cite{MS}, in which the result covered the full range of the parameter $q$.

In the obstacle free case continuity up to the boundary for solutions to degenerate porous medium type problems of the form~\eqref{eq:pde}$_{(1,3,4)}$ was proven in~\cite{DB-boundary}. Boundary regularity for nonnegative solutions to the degenerate model equation in terms of Perron's method  was developed in~\cite{KLL}. The results were extended in~\cite{BB}, where the authors proved a barrier characterization of regular boundary points, including also domains that are non-cylindrical. Furthermore, it is worth to mention that existence of a (unique) solution that is continuous up to the boundary has been shown in~\cite{Abdulla1,Abdulla2}, in which the author considers nonnegative very weak solutions to the model equation in non-cylindrical domains for the full parameter range. Boundary regularity for doubly nonlinear equations was shown in~\cite{part2} in the doubly degenerate case. Several tools in the aforementioned paper are also adapted and applied in this paper. 

Questions on existence related to obstacle problems for porous medium type equations are addressed in~\cite{AL,BLS,Schaetzler1,Schaetzler2}.

At this stage, we state our main result.

\begin{theorem}
\label{thm:main_theorem}
	Let $q \in (0,\infty)$ and $\Omega \subset \R^n$ be a bounded open set which satisfies the geometric density condition~\eqref{geometry} for some $\alpha_*, \rho_o >0$. Furthermore, suppose that the obstacle function $\psi$ satisfies~\eqref{eq:psi_conds}, the lateral boundary datum $g$ satisfies~\eqref{eq:g-conds} and the initial datum $g_o$~\eqref{eq:go-conds}. Let $u$ be a weak solution to the obstacle problem~\eqref{eq:pde} according to Definition~\ref{d.obstacle-wsol}. Then $u$ can be extended as a function that is  continuous up to the boundaries $S_T$ and $\Omega \times \{0\}$. More precisely, if $\mathcal K \subset \overline{\Omega} \times (0,T)$ is a compact set, there exists a modulus of continuity $\bomega$ determined by $C_o$, $C_1$, $n$, $q$, $\alpha_*$, $\rho_o$, $\|u\|_\infty$, $\bomega_\psi$, $\bomega_g$ and $\dist(\mathcal K, \Omega \times \{0\})$ such that 
\begin{align*}
|u(x_o,t_o) - u(x_1,t_1) | \leq \bomega \left( |x_o-x_1| + \|u\|_\infty^\frac{1-q}{2}|t_o-t_1|^\frac{1}{2} \right)
\end{align*}
for every $(x_o,t_o) \in \mathcal K \cap S_T$ and $(x_1,t_1) \in \mathcal K$. 

Furthermore, if $K \subset \Omega$ is a compact set, there exists a modulus of continuity $\bomega$ determined by $C_o$, $C_1$, $n$, $q$, $\|u\|_\infty$, $\bomega_\psi$, $\bomega_{g_o}$ and $\dist (K,\partial \Omega)$ such that 
\begin{align*}
|u(x_o,0) - u(x_1,t_1) | \leq \bomega \left( |x_o-x_1| + \|u\|_\infty^\frac{1-q}{2}|t_1|^\frac{1}{2} \right)
\end{align*}
for every $x_o \in K$ and $(x_1,t_1) \in K \times [0,T)$.

If $\psi$ satisfies~\eqref{eq:psi_conds}, $g \in C(\Omega_T \cup \partial_p \Omega_T)$, where $\partial_p \Omega_T = (\overline{\Omega} \times \{0\}) \cup S_T$, satisfies~\eqref{eq:g-conds} and the initial datum $g_o$ satisfies~\eqref{eq:go-conds} with $g_o \equiv g(\cdot,0)$, then $u$ can be extended as a function in $\Omega_T \cup \partial_p \Omega_T$ that is continuous up to $\partial_p \Omega_T$. Moreover, there exists a modulus of continuity $\bomega$ determined by $C_o$, $C_1$, $n$, $q$, $\alpha_*$, $\rho_o$, $\|u\|_\infty$, $\bomega_\psi$, $\bomega_g$ such that 
\begin{align*}
|u(x_o,t_o) - u(x_1,t_1) | \leq \bomega \left( |x_o-x_1| + \|u\|_\infty^\frac{1-q}{2}|t_o-t_1|^\frac{1}{2} \right)
\end{align*}
for every $(x_o,t_o) \in \partial_p \Omega_T$ and $(x_1,t_1) \in \Omega_T \cup \partial_p \Omega_T$.
\end{theorem}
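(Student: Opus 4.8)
The plan is to establish continuity up to the parabolic boundary by proving quantitative oscillation estimates near boundary points, following the strategy used for the doubly nonlinear case in~\cite{part2} and adapted to the porous medium setting. The proof naturally splits into the three regimes mentioned in the statement: continuity up to the lateral boundary $S_T$, continuity up to the initial surface $\Omega \times \{0\}$, and the combined statement at corner points of $\partial_p \Omega_T$. The backbone is a De Giorgi–type iteration carried out in intrinsically scaled cylinders, exploiting that a weak solution to the obstacle problem is in particular a weak supersolution to the equation (so it admits lower bounds from below via the obstacle's modulus of continuity $\bomega_\psi$) and, on the set where $u>\psi$, behaves like a solution, hence is also a subsolution there.

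**First I would** treat the lateral boundary. Fix a boundary point $(x_o,t_o)\in S_T$ and work in a cylinder $Q_{\rho,\rho^2}(x_o,t_o)$ intersected with $\Omega_T$; using the geometric density condition~\eqref{geometry} on $\Omega$, the complement of $\Omega$ occupies a fixed fraction of each ball $B_\rho(x_o)$, which lets energy/logarithmic estimates propagate the boundary information $u=g$ into the interior. Because $q$ can be singular or degenerate, one must use the intrinsic geometry: replace $\rho^2$ by a time scale $\theta\rho^2$ with $\theta$ comparable to a power of the oscillation of $u$, and then run the reduction-of-oscillation argument. The key alternatives are the familiar De Giorgi ones: either $u$ stays away from its infimum on a large portion of the cylinder — where one pushes the boundary datum down using $\bomega_g$ and the comparison with $g$ — or it is close to its infimum on a large portion, where the obstacle constraint $u\ge\psi$ together with $\bomega_\psi$ prevents $u$ from dipping too low, and a second De Giorgi iteration on the upper level sets (using that $u$ is a subsolution where $u>\psi$, combined with the standard gluing argument to handle the set $\{u=\psi\}$) gives the reduction. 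Iterating over dyadically shrinking radii yields a modulus of continuity $\bomega$ depending only on the listed quantities, and passing to the limit identifies a continuous representative whose values on $S_T$ equal $g$.

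**Next I would** handle the initial surface: near a point $x_o\in\Omega$ with $t$ small, one has the full ball $B_\rho(x_o)\subset\Omega$ available, so only the initial condition $u(\cdot,0)=g_o$ and the obstacle enter; here no geometric density condition is needed (reflected in the absence of $\alpha_*,\rho_o$ from that modulus). One runs the same intrinsic De Giorgi scheme in cylinders $B_\rho(x_o)\times(0,\theta\rho^2)$ anchored at $t=0$, using energy estimates with the initial datum and the continuity of $g_o$ and $\psi$ to control the oscillation as $t\downarrow 0$, together with $\dist(K,\partial\Omega)$ to guarantee the cylinders stay inside. Finally, for the combined statement under the compatibility hypothesis $g_o\equiv g(\cdot,0)$ and $g\in C(\Omega_T\cup\partial_p\Omega_T)$, one glues the two constructions: at a corner point $(x_o,0)\in S_T\cap(\overline\Omega\times\{0\})$ both the lateral and the initial estimates apply in overlapping intrinsic cylinders, and the compatibility condition ensures the two one-sided moduli match up, so the resulting $\bomega$ is uniform over all of $\partial_p\Omega_T$ (now independent of $\dist(\mathcal K,\Omega\times\{0\})$ since the initial time is itself a boundary point). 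The continuous extension to $\Omega_T\cup\partial_p\Omega_T$ then follows by combining the interior continuity (known from~\cite{MS}) with these boundary moduli via a standard patching argument.

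**The hard part will be** the intrinsic scaling near the boundary in the singular range $q>1$: unlike the degenerate case, the oscillation can a priori be comparable to $\|u\|_\infty$, the time scale $\theta$ degenerates, and one must carefully track the dependence on $\|u\|_\infty^{(1-q)/2}$ that appears in the statement's time modulus; getting a genuine reduction of oscillation that does not stall requires the right choice of intrinsic cylinder together with a delicate use of the obstacle bound $u\ge\psi$ to rule out the "bad" alternative. Managing the interplay of the obstacle constraint with the subsolution property on $\{u>\psi\}$ — in particular the gluing/De Giorgi step that absorbs the coincidence set — and ensuring all constants depend only on the data $C_o,C_1,n,q,\alpha_*,\rho_o,\|u\|_\infty,\bomega_\psi,\bomega_g,\bomega_{g_o}$ (and not on $u$ itself) is the technically most demanding part of the argument.
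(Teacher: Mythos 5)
Your overall philosophy (intrinsic scaling, reduction of oscillation, geometric density, alternatives involving the data) matches the paper, but two of your central mechanisms are not workable as described, and they are precisely where the paper does its real work. First, the obstacle. Your device ``$u$ is a subsolution on $\{u>\psi\}$, plus a standard gluing argument to absorb $\{u=\psi\}$'' has no precise content: $\{u>\psi\}$ is an open set of unknown geometry, the interior De Giorgi lemmas cannot be run on it, and there is no gluing lemma of this kind. The paper instead proves that $\max\{u,k\}$ is a weak subsolution of the obstacle-free equation as soon as $k\ge\sup\psi$ on the cylinder (Lemma~\ref{lem:weak_subsol}), and — crucially for the lateral boundary — that if in addition $k\ge\sup_{S_T\cap Q} g$, the truncation can be extended by the constant $k$ across $\partial\Omega$ to a subsolution in the \emph{full} cylinder (Lemma~\ref{lem:super_sub_extension}, via Hardy's inequality and \eqref{geometry}). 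Only for this extended function does the geometric density condition yield the slice-wise measure information \eqref{eq:measure-density-uk} that feeds the interior machinery; without the extension, your claim that ``energy/logarithmic estimates propagate the boundary information $u=g$'' is not an argument. The admissibility of the level is then arranged through the quantitative alternatives \eqref{eq:comparison g}: either $\muplus-\tfrac18\bomega$ dominates both $\sup\psi$ and $\sup g$, or the oscillation is already controlled by $4\osc\psi$, $4\osc g$ — which is exactly how $\bomega_\psi$ and $\bomega_g$ enter the final modulus. Your sketch never formulates this dichotomy.

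Second, the signed/singular structure. For sign-changing solutions the equation degenerates or becomes singular at $u=0$, not at the extrema, so the intrinsic time scale must be tied to the distance of $u$ from zero; this forces the near-zero/away-from-zero case distinction \eqref{eq:near zero}--\eqref{eq:away from zero}, which is absent from your two classical De Giorgi alternatives about the infimum. Away from zero a De Giorgi iteration with $\theta\approx|\mupm|^{q-1}$ works (Lemmas~\ref{l.de_giorgi2_scheven2}, \ref{l.supersub-slicewise-to-pointwise}), but near zero in the singular range $q>1$ a pure De Giorgi scheme stalls, and the paper must invoke the expansion of positivity of Liao, adapted in Proposition~\ref{prop:expansion-of-positivity-q}, together with the rescaling of Appendix~\ref{appendix-a} normalizing $\bomega_o=1$; your ``delicate use of the obstacle bound $u\ge\psi$ to rule out the bad alternative'' names the difficulty without supplying any mechanism. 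Two smaller points: the interior continuity from \cite{MS} that you want to patch with is proved there for H\"older continuous obstacles, so it is not available under \eqref{eq:psi_conds}; and the boundedness of $u$ (needed for $\|u\|_\infty^{(1-q)/2}$ in the modulus) is not an assumption but is itself established via the maximum principle argument of Lemma~\ref{lem:boundedness}.
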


\subsection{Strategy}

The strategy of the proof can roughly be divided into two parts. In Section~\ref{sec:def-prop}, we show that with appropriate truncation levels the truncated solutions to the obstacle problem are weak sub- or supersolutions to the obstacle free problem. Together with a suitable extension argument we are able to treat the truncated solutions as local sub- or supersolutions to the obstacle free problem also in cylinders that intersect the complement of the domain. In this case, admissibility of the truncation levels depends on the extrema of both the obstacle and the boundary datum.
 
In the second part we show continuity up to the parabolic boundary, which we prove separately up to the lateral and initial boundaries. Near the lateral boundary we consider backward cylinders whose vertices are attached to the lateral boundary, while near the initial boundary we use forward cylinders attached to the initial boundary. The proofs are based on estimates for sub- and supersolutions to the obstacle free problem in both cases. 

We separate the cases where the solution is near zero and where it is away from zero. In the former case, extrema of the solution are controlled by the oscillation of the solution, obstacle and the boundary data in a given cylinder, while the latter is complementary to the former case.
In both of these cases, we need to split the proof into further alternatives.
Very roughly speaking, we distinguish whether quantities related to the extrema or oscillation of the solution are large or small compared to quantities related to the extrema or oscillation of the boundary values and obstacle function.
By choosing said quantities appropriately, we are able to truncate the solution, extend it to the complement, and treat the extended truncations as sub- or supersolutions to the obstacle free problem by results in Section~\ref{subsec:truncations}. Then, the reduction in oscillation is shown by using tools from the obstacle free case, which are described in Section~\ref{sec:aux}. At this point, boundary conditions together with~\eqref{geometry} obviously play an important role.

The tools in Section~\ref{sec:aux} for reduction in oscillation are divided into the local case and the case near the initial boundary, which are further divided into tools used in the cases near zero and away from zero. In the cases away from zero, we apply a De Giorgi type iteration scheme, which is essentially based on energy estimates given in the beginning of Sections~\ref{subsec:local-case} and~\ref{subsec:initial-case}. In the case near zero on the initial boundary, we use a result on propagation of positivity, see Lemma~\ref{l.de-giorgi-initial}. In the case near zero on the lateral boundary we use separate tools in the singular and degenerate cases. In the degenerate case we are able to exploit a standard De Giorgi type iteration argument, while in the singular case the same approach is not applicable as such. In that case, we are required to rely on a heavier tool; a certain formulation of expansion of positivity, see~\cite{Liao}, which is adapted to our setting in Proposition~\ref{prop:expansion-of-positivity-q}.

\subsection{On the notion of solution}

In this paper we consider a notion of solution to the obstacle problem, in which the variational inequality holds in a local sense. The boundary conditions are imposed separately and they are attained in the standard Sobolev sense slice-wise on the lateral boundary and in the $L^{q+1}$-sense on the initial boundary. The reason for using the local notion mainly is that it allows comparison maps whose boundary values do not necessarily coincide with the boundary values of the solution, which makes the proofs in Section~\ref{subsec:truncations} attainable without unnecessary complications. Furthermore, this allows us to relax the assumptions on the boundary data. It was shown in~\cite{BLS} that the local inequality implies the global one (which was used e.g. in~\cite{BLS,Schaetzler1,Schaetzler2}), provided that the domain and boundary data satisfy appropriate regularity assumptions and the boundary data is attained in the aforementioned sense. The reverse direction also holds under similar assumptions, which is discussed in Appendix~\ref{sec:appendix_definitions}.

\subsection{Novelty and significance}

To the authors' knowledge, the issue of boundary regularity has not been addressed in the literature even in the obstacle free case for signed solutions to the singular porous medium equation. Our result already covers this as a special case. Furthermore, in contrast to the results in the local case (\cite{MS,Cho_Scheven,BLS_holder}), we are able to treat an obstacle and boundary datum with general moduli of continuity. 

Indeed, the approach we use has certain differences compared to the proofs for local H\"older regularity to the obstacle problem in~\cite{MS,Cho_Scheven,BLS_holder}. In the aforementioned cases, the obstacle was involved in all parts of the proof. Furthermore, the construction of a shrinking sequence of cylinders was strongly affected by continuity properties of the obstacle. Namely, the first step of the construction and oscillation decay estimate (which was eventually proved in non-intrinsic cylinders) relied heavily on the assumption that the obstacle is H\"older continuous. In our proof of boundary regularity the assumptions on the obstacle can be relaxed. Due to the conditions on the boundary of the domain and chosen alternatives, the treatment of reduction in oscillation can be reduced entirely to a problem in the obstacle free case provided that the boundary satisfies rather mild regularity assumptions. Furthermore, this allows us to construct cylinders and prove an oscillation decay estimate in a manner which is not affected by any specific continuity properties of the obstacle or the boundary datum.

Moreover, instead of assuming that the solution is bounded in the first place, the proof is included in Section~\ref{subsec:boundedness} based on a maximum principle for subsolutions to the obstacle free problem.

\medskip
 
\noindent
{\bf Acknowledgments.}
K. Moring has been supported by the Magnus Ehrnrooth Foundation. Both authors have also been supported by the FWF-Project P31956-N32 "Doubly nonlinear evolution equations". The authors would like to thank Christoph Scheven for useful discussions and hospitality during the second author's visit at the University of Duisburg-Essen.



\section{Definition and some properties of solutions} \label{sec:def-prop}

\subsection{Notation}
Let $\Omega \subset \R^n$, $n \geq 2$, be a bounded domain and $0<T<\infty$.
For cylinders with vertex $z_o = (x_o,t_o) \in \overline{\Omega_T}$ we use the notations
\begin{align*}
	Q_{\varrho,s}(z_o) := B_\varrho(x_o) \times (t_o-s,t_o),\\
	Q_{\varrho,s}^+(z_o) := B_\varrho(x_o) \times (t_o,t_o+s).
\end{align*}

For $b \in \R$ and $\alpha > 0$ we denote the signed $\alpha$-power of $b$ by
\[ \boldsymbol{b}^\alpha :=
\begin{cases}
|b|^{\alpha -1} b &\text{if } b \neq 0,\\
0 &\text{if } b = 0. 
\end{cases}\]

\subsection{Definition of solutions} \label{subsec:def-sol}
In~\eqref{eq:pde} we assume that $\mathbf{A} \colon \Omega_T \times \R \times \R^n \to \R^n$ is a Carath\'eodory function, i.e.,~it is
measurable with respect to $(x,t) \in \Omega_T$ for all $(u, \xi) \in \R \times \R^n$
and continuous with respect to $(u, \xi) \in \R \times \R^n$ for a.e.~$(x,t) \in \Omega_T$. Moreover, we suppose that $\mathbf{A}$ satisfies the structure conditions
\begin{equation}
	\left\{
	\begin{array}{l}
		\mathbf{A}(x,t,u,\xi) \cdot \xi \geq C_o |\xi|^2, \\[5pt]
		|\mathbf{A}(x,t,u,\xi)| \leq C_1 |\xi|,
	\end{array}
	\right.
	\quad \text{for a.e. $(x,t) \in \Omega_T$ and all $(u, \xi) \in \R \times \R^n$,}
	\label{eq:structure}
\end{equation}
where $C_o, C_1 >0$ are given constants.

For a given obstacle function $\psi \colon \Omega \times [0,T) \to \R$ we define classes of functions
$$
K_{\psi}(\Omega_T) := \left\{ v\in C((0,T); L^{q+1}(\Omega)) : v \in  L^2(0,T;H^1(\Omega)),\, v \geq \psi \, \text{ a.e. in } \Omega_T \right\},
$$
and
$$
K'_{\psi}(\Omega_T) := \left\{ v \in K_{\psi}(\Omega_T): \partial_t v \in L^{q+1}(\Omega_T) \right\}.
$$
For boundary values $g \in L^2(0,T;H^1(\Omega))$ we consider the class
$$
K_{\psi,g}(\Omega_T) := \left\{ v \in K_{\psi}(\Omega_T) : v-g \in L^2(0,T; H^1_0(\Omega)) \right\}.
$$
Furthermore, we denote
\begin{equation*}
I_{\psi_o}(\Omega) := \left\{v \in L^{q+1}(\Omega) : v \geq \psi(\cdot,0)\, \text{ a.e. in } \Omega \right\}.
\end{equation*}

For the sake of generality, we define weak solutions to the obstacle problem as local solutions that attain the lateral boundary values $g$ or the initial boundary values $g_o$ in the following way.
For a discussion of the connection to global weak solutions we refer to Appendix~\ref{sec:appendix_definitions}.
\begin{definition} \label{d.obstacle-wsol}
\begin{enumerate}

\item A function $u \in K_{\psi}(\Omega_T)$ is a weak solution to the obstacle problem~\eqref{eq:pde}$_{1-2}$, if and only if the variational inequality
\begin{equation}\label{eq:obstacle-wsol}
\llangle \partial_t \u^q , \alpha \eta(v-u) \rrangle + \iint_{\Omega_T} \alpha \mathbf{A}(x,t,\u^q, \nabla u) \cdot \nabla \left(\eta (v-u) \right) \, \d x \d t \geq 0
\end{equation}
holds for every cut-off function $\alpha\in W^{1,\infty}_0([0,T]; \R_{\geq 0})$, every $\eta \in C^1_0(\Omega, \R_{\geq0})$ and all comparison maps $v \in K_{\psi}'(\Omega_T)\cap C([0,T];L^{q+1}(\Omega)) $.
Here, we define the time term by
\begin{align*} 
\llangle \partial_t \u^q , \alpha \eta(v-u) \rrangle := &\iint_{\Omega_T} \eta \left[ \alpha' \left( \tfrac{q}{q+1}|u|^{q+1} -  \u^q v \right) - \alpha \u^q \partial_t v \right] \, \d x \d t.
\end{align*}

\item We say that $u \in K_{\psi,g}(\Omega_T)$ is a weak solution to the obstacle problem with lateral boundary values $g \in K_\psi(\Omega_T)$ if~\eqref{eq:obstacle-wsol} holds.

\item Moreover, we say that $u$ is a weak solution to the obstacle problem with initial values $g_o \in I_{\psi_o}(\Omega)$ if~\eqref{eq:obstacle-wsol} holds and 
\begin{equation} \label{eq:obstacle-initial-q+1}
\bint_0^h \int_\Omega |u-g_o|^{q+1} \, \d x \d t \xrightarrow{h\downarrow 0} 0.
\end{equation} 
\end{enumerate}

\end{definition}

Throughout the paper, we will use the following notion when referring to a solution to the partial differential equation \eqref{eq:pde}$_1$ without obstacle.
\begin{definition}
\label{def:localsol_pme}
\begin{enumerate}
\item A function
$$
	u \in C^0((0,T);L^{q+1}_\mathrm{loc}(\Omega)) \cap L^2_\mathrm{loc}(0,T;W^{1,2}_\mathrm{loc}(\Omega))
$$
is a local weak sub(super)solution to \eqref{eq:pde}$_1$ in $\Omega_T$ if
\begin{align}
	\iint_{\Omega_T} 
	- \partial_t \varphi \u^q + \mathbf{A}(x,t,\u^q, \nabla u)\cdot \nabla \varphi \,\dx\dt
	\leq (\geq) \, 0
	\label{eq:localsol_pme}
\end{align}
holds for all test functions $\varphi \in C^\infty_0(\Omega_T, \R_{\geq0})$.
\item Moreover, we say that $u$ is a weak sub(super)solution to \eqref{eq:pde}$_1$ with initial values $g_o \in L^{q+1}_\mathrm{loc}(\Omega)$ if it is a local weak sub(super)solution and
\begin{equation} \label{eq:pme_initial_values}
\bint_0^h \int_K (u-g_o)_\pm^{q+1} \, \d x \d t \xrightarrow{h\downarrow 0} 0,
\end{equation}
holds true for every compact set $K \subset \Omega$.
\end{enumerate}
\end{definition}

In the proof of boundary regularity, we will make following assumptions on the obstacle $\psi$ and boundary data $g$ and $g_o$:
\begin{align}
&\psi \in C(\overline{\Omega_T}) \text{ with modulus of continuity } \bomega_{\psi} (\cdot) \text{ in } \overline{\Omega_T}; \label{eq:psi_conds} \\ 
&g \in K_\psi(\Omega_T) \cap C(\Omega_T \cup \overline{S_T}), \text{ with modulus of continuity } \bomega_{g} (\cdot) \text{ on } \overline{S_T}; \label{eq:g-conds}  \\
&g_o \in I_{\psi_o}(\Omega) \cap C(\overline{\Omega}) \text{ with modulus of continuity } \bomega_{g_o} (\cdot) \text{ on } \overline{\Omega}. \label{eq:go-conds}
\end{align}

Furthermore, we will assume that the domain $\Omega$ satisfies the geometric density condition, i.e.
\begin{equation}\label{geometry}
	\begin{minipage}[c][1.5cm]{0.9\textwidth}
	there exist $\alpha_*\in(0,1)$ and $\rho_o>0$, such that for all $x_o\in\partial \Omega$ and $\rho \in (0,\rho_o]$ there holds
	$
	|\Omega \cap B_{\rho}(x_o)|\leq (1-\alpha_*)|B_\rho(x_o)|.
	$
	\end{minipage}
\end{equation}

\subsection{Truncation of solutions to the obstacle problem} \label{subsec:truncations}
In this section, we show that truncations of weak solutions to the obstacle problem to \eqref{eq:pde} with suitable truncation levels are weak sub(super)solutions to \eqref{eq:pde} in the obstacle-free setting in the intersection of the space-time cylinder $\Omega_T$ with cylinders with vertex in $\Omega_T$ or on its lateral boundary $S_T$.

To this end, for $v \in L^1(\Omega_T)$, $v_o \in L^1(\Omega)$ and $h >0$ we define a mollification in time by
\begin{equation} \label{e.time_moll_g}
\mollifytime{v}{h} (x,t) :=  
e^{-\frac{t}{h}} v_o(x) + \tfrac{1}{h} \int_0^t e^\frac{s-t}{h}v(x,s) \, \d s.
\end{equation}

The properties of the this mollification procedure are collected in the following lemma. We refer to~\cite[Lemma 2.9]{Kinnunen-Lindqvist} and~\cite[Appendix B]{BDM:pq}. 
\begin{lemma}
\label{lem:time_mollification}
Let $1 \leq r \leq \infty$, $X \in \big\{ L^r(\Omega), W^{1,r}(\Omega), W^{1,r}_0(\Omega) \big\}$, $v \in L^r(0,T;X)$ and $\mollifytime{v}{h}$ be defined by \eqref{e.time_moll_g} with $v_o \in X$. Then the following statements hold true:
\begin{enumerate}
\item We have that $\mollifytime{v}{h} \in L^r(0,T;X)$. Moreover,  $\mollifytime{v}{h} \in C([0,T];X)$.
\item
The weak time derivative $\partial_t \mollifytime{v}{h} \in L^r(0,T;X)$ is given by the formula
\begin{equation*}
  \partial_t \mollifytime{v}{h} = \tfrac{1}{h} ( v - \mollifytime{v}{h} ).
\end{equation*}
\item If $1 \leq r < \infty$, then
\begin{equation*}
\mollifytime{v}{h} \to v\quad \text{ in } L^r(0,T;X) \text{ as } h \downarrow 0.  
\end{equation*}
Further, if $v \in C([0,T];X)$ and $v_o = v(\cdot,0)$, then $\mollifytime{v}{h} \to v$ in $C([0,T];X)$ as $h \downarrow 0$.
\item
If $\Omega \subset \R^n$ is a bounded set, $v \in C(\overline{\Omega_T})$ and $v_o = v(\cdot,0)$, then 
$$
\mollifytime{v}{h} \to v \quad \text{ uniformly in }\Omega_T\ \text{ as }\ h \to 0.
$$
\end{enumerate}
\end{lemma}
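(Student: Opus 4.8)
\textbf{Plan of proof for Lemma~\ref{lem:time_mollification}.}
The idea is to treat the mollification $\mollifytime{v}{h}$ as the unique solution of the linear ODE $h\,\partial_t w + w = v$ with initial datum $w(0)=v_o$, written in integral form as~\eqref{e.time_moll_g}, and to exploit that the convolution-type kernel $t\mapsto \tfrac1h e^{-t/h}\mathbf 1_{\{t>0\}}$ has unit mass on $(0,\infty)$. Throughout, the $X$-valued Bochner integral in~\eqref{e.time_moll_g} makes sense because $s\mapsto e^{(s-t)/h}v(x,s)$ is in $L^r(0,T;X)$ for fixed $t$, so all manipulations below are justified by standard vector-valued integration theory (Fubini for Bochner integrals, and differentiation under the integral sign for absolutely continuous $X$-valued functions).

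First I would prove (1) and (2) together. Writing $\mollifytime{v}{h}(t) = e^{-t/h}v_o + \tfrac1h\int_0^t e^{(s-t)/h}v(s)\,\ds$, an application of Minkowski's integral inequality in $X$ gives $\|\mollifytime{v}{h}(t)\|_X \leq e^{-t/h}\|v_o\|_X + \tfrac1h\int_0^t e^{(s-t)/h}\|v(s)\|_X\,\ds$, and then Young's convolution inequality on $(0,T)$ (using $\|\tfrac1h e^{-\cdot/h}\|_{L^1(0,\infty)}=1$) yields $\|\mollifytime{v}{h}\|_{L^r(0,T;X)} \leq \|v_o\|_X\,\|e^{-\cdot/h}\|_{L^r} + \|v\|_{L^r(0,T;X)}$, proving membership in $L^r(0,T;X)$. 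Continuity into $X$ follows by estimating $\mollifytime{v}{h}(t)-\mollifytime{v}{h}(\tau)$ directly from the integral formula: the boundary terms are continuous in $t$ and the integral term is an absolutely continuous function of the upper limit. For (2), I would differentiate~\eqref{e.time_moll_g}: the factor $e^{-t/h}$ differentiates to $-\tfrac1h e^{-t/h}$, and differentiating $\tfrac1h\int_0^t e^{(s-t)/h}v(s)\,\ds = \tfrac1h e^{-t/h}\int_0^t e^{s/h}v(s)\,\ds$ via the product rule gives $-\tfrac1h\cdot\tfrac1h e^{-t/h}\int_0^t e^{s/h}v(s)\,\ds + \tfrac1h e^{-t/h}e^{t/h}v(t) = -\tfrac1h\big(\mollifytime{v}{h}(t)-e^{-t/h}v_o\big) + \tfrac1h v(t)$; combining, $\partial_t\mollifytime{v}{h} = \tfrac1h(v-\mollifytime{v}{h})$, which lies in $L^r(0,T;X)$ by (1) and the hypothesis $v\in L^r(0,T;X)$. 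This identity should be read weakly, i.e. tested against $C^\infty_0(0,T)$ scalar functions, with the computation justified by Fubini.

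Next, for (3) I would use a density/approximation argument. For $v$ in a dense subset of $L^r(0,T;X)$ — say $v\in C([0,T];X)$ — together with $v_o=v(0)$, one writes $\mollifytime{v}{h}(t)-v(t) = e^{-t/h}(v_o-v(t)) + \tfrac1h\int_0^t e^{(s-t)/h}(v(s)-v(t))\,\ds$; the first term tends to $0$ and the second is bounded by $\tfrac1h\int_0^t e^{(s-t)/h}\sup_{|s-t|\le\delta}\|v(s)-v(t)\|_X\,\ds$ plus a term controlled by $e^{-\delta/h}\cdot 2\|v\|_{C([0,T];X)}$, giving uniform convergence on $[0,T]$, hence the $C([0,T];X)$ statement and a fortiori the $L^r$ one for continuous $v$. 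For general $v\in L^r(0,T;X)$, pick $v_k\to v$ in $L^r(0,T;X)$ with $v_k\in C([0,T];X)$; by linearity $\mollifytime{v}{h} = \mollifytime{v_k}{h} + \mollifytime{(v-v_k)}{h}$, and by the contraction estimate from step~(1) (with $v_o$-part handled by also approximating $v_o$ in $X$, or simply noting $\|\mollifytime{(v-v_k)}{h}\|_{L^r(0,T;X)}\le \|v-v_k\|_{L^r(0,T;X)} + C(h)\|v_o - (v_k)_o\|_X$) one makes the error uniformly small in $h$, then sends $h\downarrow 0$ for fixed $k$. A standard $3\varepsilon$-argument concludes. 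Finally, (4) follows from the same computation as in step~(3) but measuring everything in the sup-norm over $\Omega$: since $v\in C(\overline{\Omega_T})$ is uniformly continuous, $\sup_{x\in\Omega,|s-t|\le\delta}|v(x,s)-v(x,t)|\to 0$ as $\delta\to0$, and with $v_o=v(\cdot,0)$ the boundary term $e^{-t/h}\sup_x|v_o(x)-v(x,t)|$ is controlled by the modulus of continuity in $t$; splitting the time integral at $s=t-\delta$ exactly as above gives uniform convergence on $\Omega_T$.

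The only mildly delicate point is the rigorous justification of the differentiation identity in (2) at the level of vector-valued weak derivatives — i.e. showing that the pointwise formula~\eqref{e.time_moll_g} indeed has the claimed weak time derivative in $L^r(0,T;X)$ rather than merely a formal one. This is handled by testing against $\phi\in C^\infty_0(0,T)$, inserting~\eqref{e.time_moll_g}, and applying Fubini's theorem for Bochner integrals to move the $t$-integration inside; integration by parts in $t$ then transfers the derivative onto $\phi$ and reproduces $\tfrac1h(v-\mollifytime{v}{h})$. Everything else is a routine combination of Minkowski's/Young's inequalities and uniform continuity, and since the statement is quoted from~\cite[Lemma~2.9]{Kinnunen-Lindqvist} and~\cite[Appendix~B]{BDM:pq}, one may alternatively simply cite those references for the full details.
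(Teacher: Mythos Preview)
Your proof plan is correct and follows the standard route for this type of exponential time mollification. The paper itself does not give a proof of Lemma~\ref{lem:time_mollification} at all; immediately after the statement it simply writes ``We refer to~\cite[Lemma 2.9]{Kinnunen-Lindqvist} and~\cite[Appendix B]{BDM:pq}.'' Your outline is precisely the argument contained in those references (kernel of unit $L^1$-mass, Young/Minkowski for (1), direct differentiation of the integral formula for (2), density plus a $3\varepsilon$-argument for (3), uniform continuity for (4)), so there is nothing to compare---you have supplied what the paper outsources. One minor point worth tightening in a full write-up: in the $L^r$-convergence of (3) with general $v_o\in X$, it is cleanest to first observe that $\|e^{-\cdot/h}v_o\|_{L^r(0,T;X)}^r=\|v_o\|_X^r\,\tfrac{h}{r}(1-e^{-rT/h})\to 0$ and thereby reduce to $v_o=0$, so that the density argument runs with zero initial value throughout and the $3\varepsilon$ estimate is genuinely uniform in $h$.
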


With this mollification in time at hand, we are able to proceed as follows.
\begin{lemma} \label{lem:weak_subsol}
Let $z_o = (x_o,t_o) \in \overline \Omega \times (0,T)$ and consider the cylinder $Q_{R,S}(z_o)$ with $R,S>0$.
Further, let $u$ be a weak solution to the obstacle problem according to Definition~\ref{d.obstacle-wsol} and suppose that $\psi \in C(\overline{\Omega_T})$. Then,
\begin{enumerate}
\item \label{lem:weak_subsol_1} $\min \{u,k\}$ is a weak supersolution to~\eqref{eq:pde}$_1$ in $Q_{R,S}(z_o) \cap \Omega_T$ in the sense of Definition \ref{def:localsol_pme} for any $k \in \R$;
\item \label{lem:weak_subsol_2} $\max\{u,k\}$ is a weak subsolution to~\eqref{eq:pde}$_1$ in $Q_{R,S}(z_o) \cap \Omega_T$ in the sense of Definition \ref{def:localsol_pme} for any $k \geq \sup_{Q_{R,S}(z_o) \cap \Omega_T} \psi$.
\end{enumerate}
\end{lemma}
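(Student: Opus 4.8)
The plan is to test the variational inequality \eqref{eq:obstacle-wsol} with carefully chosen comparison maps that encode the truncation, and then let the mollification parameter pass to the limit. I will first reduce everything to a fixed subcylinder: pick $\varphi \in C^\infty_0(Q_{R,S}(z_o) \cap \Omega_T, \R_{\geq 0})$, so that $\spt \varphi \Subset \Omega_T$, and choose $\eta \in C^1_0(\Omega, \R_{\geq 0})$ with $\eta \equiv 1$ on the spatial projection of $\spt \varphi$, together with $\alpha \in W^{1,\infty}_0([0,T];\R_{\geq 0})$ equal to $1$ on the time projection of $\spt \varphi$; this way the cut-offs $\alpha, \eta$ disappear wherever they meet $\varphi$, and the admissible-comparison-map machinery of Definition~\ref{d.obstacle-wsol} becomes usable on the subcylinder.

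For part~\eqref{lem:weak_subsol_2}: fix $k \geq \sup_{Q_{R,S}(z_o)\cap\Omega_T}\psi$ and set $w := \max\{u,k\}$. The natural candidate comparison map is $v := w - \lambda \varphi$ for $\lambda > 0$ small — but this is not yet admissible because it need not lie in $K'_\psi$ (it lacks the time-derivative regularity and may not be globally defined). The fix is the time mollification~\eqref{e.time_moll_g}: replace $u$ by $\mollifytime{u}{h}$ inside the construction, i.e. work with $v := \max\{\mollifytime{u}{h}, k\} - \lambda \varphi$ (or a similar mollified truncation), which by Lemma~\ref{lem:time_mollification}(1)--(2) has $\partial_t v \in L^{q+1}$ and lies in $C([0,T];L^{q+1}(\Omega))$; since $k \geq \sup \psi$ on the relevant cylinder and $\mollifytime{u}{h}$ is a convex-combination-type average of $u \geq \psi$ with the initial datum, one checks $v \geq \psi$ there after choosing the initial datum in the mollification appropriately (or localizing so the obstacle constraint is automatic on $\spt\varphi$, using $k \geq \sup\psi$). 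Plugging $v - u = (k - u)_+ - \lambda\varphi + (\text{mollification error})$ into \eqref{eq:obstacle-wsol}, the term involving $(k-u)_+$ is handled by observing that $(k-u)_+ \geq 0$ forces the right sign in the gradient term on $\{u < k\}$ where $\nabla w = 0$, and the time term telescopes; the $-\lambda\varphi$ contribution, after dividing by $\lambda$ and sending $\lambda \downarrow 0$, yields exactly the subsolution inequality \eqref{eq:localsol_pme} for $w$, modulo the mollification errors. Finally let $h \downarrow 0$ using Lemma~\ref{lem:time_mollification}(3)–(4) to remove those errors: $\mollifytime{u}{h} \to u$ strongly, $\partial_t \mollifytime{u}{h} = \tfrac1h(u - \mollifytime{u}{h})$ combines with the time term in $\llangle \partial_t\u^q, \cdot\rrangle$ to produce, in the limit, the distributional time derivative $-\iint \partial_t\varphi\, \boldsymbol{w}^q$ (this is the standard Kinnunen–Lindqvist-type cancellation). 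Part~\eqref{lem:weak_subsol_1} is the mirror image: $\min\{u,k\}$ with \emph{any} $k \in \R$, comparison map built from $\max\{\mollifytime{u}{h},k\} + \lambda\varphi$ direction reversed, giving the supersolution inequality; here no lower bound on $k$ is needed because the obstacle constraint $v \geq \psi$ is preserved automatically when one only ever \emph{adds} a nonnegative quantity to something $\geq \psi$.

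The main obstacle is the admissibility of the comparison map together with the correct sign bookkeeping in the time term: one must ensure $v \in K'_\psi(\Omega_T) \cap C([0,T];L^{q+1}(\Omega))$ at every stage (hence the detour through $\mollifytime{\cdot}{h}$), and one must verify that the contribution of the truncation part $(k-u)_\pm$ to both the elliptic term $\iint \mathbf{A}(\cdots)\cdot\nabla(\eta(v-u))$ and the time term $\llangle \partial_t\u^q, \alpha\eta(v-u)\rrangle$ has the sign that does not spoil the inequality — this uses crucially that $\nabla u = 0$ a.e. on $\{u = k\}$, the monotonicity built into $\boldsymbol{b}^q$, and, for part~\eqref{lem:weak_subsol_2}, the hypothesis $k \geq \sup\psi$ which is precisely what makes $\max\{\mollifytime{u}{h},k\}-\lambda\varphi$ stay above $\psi$. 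Everything else — the double limit $\lambda\downarrow0$ then $h\downarrow0$, and the passage from $\eta,\alpha$-weighted inequalities to the clean local form — is routine once these sign and admissibility points are settled.
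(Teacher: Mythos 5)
Your proposal has two genuine gaps, both at the points you yourself flag as ``the main obstacle.'' First, admissibility: comparison maps in Definition~\ref{d.obstacle-wsol} must satisfy $v \geq \psi$ a.e.\ in \emph{all} of $\Omega_T$, not only near $\spt\varphi$. Your candidate $v = \max\{\mollifytime{u}{h},k\} - \lambda\varphi$ equals $k$ on the set $\{\mollifytime{u}{h} < k\}$, which reaches outside $Q_{R,S}(z_o)$, and there the hypothesis $k \geq \sup_{Q_{R,S}\cap\Omega_T}\psi$ gives no control of $\psi$; moreover $\mollifytime{u}{h} \geq \psi$ itself is false in general, since mollification only preserves the order against $\mollifytime{\psi}{h}$, which may lie below $\psi$. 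Neither ``choosing the initial datum appropriately'' nor ``localizing'' repairs this. The paper's comparison map is structurally different: it is a perturbation of $\mollifytime{u}{i}$ itself, shifted upward by $\|\psi - \mollifytime{\psi}{i}\|_\infty$ and perturbed only inside $Q\cap\Omega_T$ by $-\eps\varphi\,\tfrac{(\mollifytime{u}{i}-k)_+}{(\mollifytime{u}{i}-k)_+ + \sigma}$, so that outside $\spt\varphi$ it dominates $\psi$ automatically and inside it does so because $\eps$ is small relative to $k - \sup_{Q\cap\Omega_T}\psi$.

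Second, and more seriously, the mechanism by which the truncation enters is wrong. Writing $v-u = (k-u)_+ - \lambda\varphi$, the $(k-u)_+$ contribution is \emph{not} supported in $\spt\varphi$, so it carries terms with $\mathbf{A}(x,t,\u^q,\nabla u)\cdot\nabla\eta$ and $\alpha'$ spread over all of $\Omega_T$; these have no sign and do not scale with $\lambda$, so they cannot be discarded before dividing by $\lambda$. Even if they could, the surviving $-\lambda\varphi$ term only involves $\u^q$ and $\nabla u$, so the limit $\lambda \downarrow 0$ would at best produce a subsolution-type inequality for $u$ --- which is neither what is claimed nor true in general for an obstacle-problem solution --- rather than for $w=\max\{u,k\}$. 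In the paper the truncation is encoded in the test direction itself: the perturbation carries the factor $\tfrac{(\mollifytime{u}{i}-k)_+}{(\mollifytime{u}{i}-k)_+ + \sigma}$, a Lipschitz approximation of $\chi_{\{u>k\}}$, and the limits $i\to\infty$, $\sigma\downarrow 0$ reconstruct $\boldsymbol{\max\{u,k\}}^q$ through the explicit primitive $k^q + q\int_k^{u}\tfrac{|s|^{q-1}(s-k)_+}{(s-k)_+ + \sigma}\,\ds$ and $\nabla\max\{u,k\}$ through $\tfrac{(u-k)_+}{(u-k)_+ + \sigma}\,\mathbf{A}(x,t,\u^q,\nabla u)$. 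Without such a localization of the perturbation to $\{u>k\}$ your scheme cannot yield the asserted conclusion; your sign observations ($\nabla w = 0$ on $\{u<k\}$, monotonicity of $\boldsymbol{b}^q$, the Kinnunen--Lindqvist cancellation for the time term) are correct ingredients but do not close these two gaps.
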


\begin{proof}
We start by showing \eqref{lem:weak_subsol_2}. Let us denote $Q = Q_{R,S}(z_o)$ for simplicity. Since $\Omega$ is bounded, there exists $\rho > 0$ such that $B_\frac{\rho}{2}(0) \supset \Omega$.  Since $\psi \in C(\overline{\Omega_T})$, we extend it as a continuous function to $B_{\rho} (0) \times [0,T]$ and let $\psi \equiv 0$ in $(\R^n \setminus B_{\rho}(0)) \times [0,T]$. We still denote the extension by $\psi$. We use time mollifications $\mollifytime{u}{h,(\psi_o)_\delta}$ and $ \mollifytime{\psi}{h,(\psi_o)_\delta}$ with initial values $(\psi_o)_\delta$, where $\psi_o = \psi(\cdot,0)$ and $(\cdot)_\delta$ denotes a standard mollification in the spatial variables with parameter $\delta > 0$. Observe that for any sequence $\eps_i \downarrow 0$ when $i \to \infty$, there exists a sequence $\delta_i \downarrow 0$ such that $\|\mollifytime{\psi}{h,(\psi_o)_\delta} - \mollifytime{\psi}{h,\psi_o}\|_\infty < \eps_i$ for every $\delta \in (0,\delta_i]$ and $h > 0$, and there exists a sequence $\tilde h_i > 0$ such that $\|\psi - \mollifytime{\psi}{h,\psi_o}\|_\infty < \eps_i$ for every $h \in (0,\tilde h_i]$ by Lemma~\ref{lem:time_mollification} (4). 
Furthermore, as in the proof of Lemma~\ref{lem:global-is-local} together with Lemma~\ref{lem:time_mollification} (3), for the sequence $\delta_i \downarrow 0$ determined above, there exists $\hat h_i \downarrow 0$ such that $\max \{\| \mollifytime{u}{h,(\psi_o)_{\delta_i}} - u\|_{L^2(0,T;H^1(\Omega))}, \| \mollifytime{u}{h,(\psi_o)_{\delta_i}} - u\|_{L^{q+1}(\Omega_T)}\} < \eps_i$ for every $h \in (0,\hat h_i]$. By choosing $ h_i = \min \{\tilde h_i,\hat h_i\}$, we find that 
$$
\|\mollifytime{\psi}{h_i,(\psi_o)_{\delta_i}} - \psi\|_\infty \xrightarrow{i\to \infty} 0, \quad \|\mollifytime{u}{h_i,(\psi_o)_{\delta_i}} - u\|_{L^2(0,T;H^1(\Omega))} \xrightarrow{i\to \infty} 0,
$$
and
$$
\|\mollifytime{u}{h_i,(\psi_o)_{\delta_i}} - u\|_{L^{q+1}(\Omega_T)} \xrightarrow{i\to \infty} 0.
$$
With these choices, let us denote $\mollifytime{u}{i} := \mollifytime{u}{h_i,(\psi_o)_{\delta_i}}$ and $\mollifytime{\psi}{i}:= \mollifytime{\psi}{h_i,(\psi_o)_{\delta_i}}$.

At this point, suppose that $k > \sup_{Q \cap \Omega_T} \psi$.
We define a comparison map
$$
v_i = \mollifytime{u}{i} - \eps \varphi \frac{\left( \mollifytime{u}{i} - k \right)_+}{\left( \mollifytime{u}{i} - k \right)_+ + \sigma} + \| \psi - \mollifytime{\psi}{i} \|_{L^\infty(\Omega_T)},
$$
where $\varphi \in C_0^\infty(Q\cap \Omega_T, \R_{\geq 0})$
and $\eps \leq \frac{1}{2 \|\varphi \|_\infty} ( k - \sup_{Q \cap \Omega_T} \psi )$.
Then, $v_i$ with large enough $i$ is an admissible comparison map. Indeed, 
if $\mollifytime{u}{i} \leq k$, then
$$
v_i \geq \mollifytime{u}{i} - \mollifytime{\psi}{i} + \psi \geq \psi \quad \text{ in } \Omega_T.
$$
Further, if $\mollifytime{u}{i} > k$, note that by our choice of $\varphi$ we have that
$$
v_i \geq \mollifytime{u}{i} - \mollifytime{\psi}{i} + \psi \geq \psi \quad \text{ in } \Omega_T \setminus Q.
$$ 
Moreover, by choosing $i$ large enough, there holds $\|\psi - \mollifytime{\psi}{i}\|_{L^\infty(\Omega_T)} \leq \tfrac{1}{2}(k - \sup_{Q\cap \Omega_T} \psi )$. Now for all such $i$ we have
\begin{align*}
v_i &\geq \mollifytime{u}{i} - \eps \|\varphi\|_\infty - \| \psi - \mollifytime{\psi}{i} \|_\infty \\
&\geq k - \tfrac{1}{2}(k - \sup_{Q \cap \Omega_T} \psi) - \tfrac12(k - \sup_{Q \cap \Omega_T} \psi) \\
&= \sup_{Q \cap \Omega_T} \psi \geq \psi
\end{align*}
in $\Omega_T \cap Q$, so that $v_i \geq \psi$ in $\Omega_T$ in  every case.
Furthermore, due to the monotonicity of $\R \ni v \mapsto \boldsymbol{v}^q$, for any $\eps \leq \frac{\sigma}{\|\varphi\|_\infty}$ we have that
\begin{align} \label{e.pos_derivative}
	(\u^q &- \boldsymbol{\mollifytime{u}{i}^q})
	\left[ \partial_t \left( \mollifytime{u}{i} - \eps \varphi \tfrac{\left( \mollifytime{u}{i} - k \right)_+}{\left( \mollifytime{u}{i} - k \right)_+ + \sigma} \right)
	+ \varepsilon \partial_t \varphi \tfrac{\left( \mollifytime{u}{i} - k \right)_+}{\left( \mollifytime{u}{i} - k \right)_+ + \sigma} \right] \nonumber \\
	&= \tfrac{1}{h_i} (\u^q - \boldsymbol{\mollifytime{u}{i}^q}) (u- \mollifytime{u}{i} ) \left(1- \eps \varphi \tfrac{\sigma}{ \left[\left( \mollifytime{u}{i} - k \right)_+ + \sigma \right]^2} \chi_{\{\mollifytime{u}{i} \geq k\}}\right) \geq 0.
\end{align}
Fix $\sigma>0$ and $\eps = \frac{1}{\|\varphi \|_\infty} \min \left \{\frac{1}{2} ( k - \sup_{Q \cap \Omega_T} \psi ), \sigma \right\}$ and choose compactly supported $\alpha$ and $\eta$ in $Q_{R,S} \cap \Omega_T$ such that $\alpha \eta \equiv 1$ in $\spt(\varphi)$.
Using this together with~\eqref{e.pos_derivative}, the fact that $\boldsymbol{\mollifytime{u}{i}^q} \partial_t \mollifytime{u}{i} = \frac{1}{1+q} |\mollifytime{u}{i}|^{q+1}$ and integration by parts, we derive the following estimate for the parabolic part of \eqref{eq:obstacle-wsol}
\begin{align*}
\llangle &\partial_t \u^q , \alpha\eta (v_i-u) \rrangle \\
&= \iint_{\Omega_T} \alpha'\eta \left[ \tfrac{q}{q+1}|u|^{q+1} - \u^q \mollifytime{u}{i} \right] - \alpha \eta \boldsymbol{\mollifytime{u}{i}^q} \partial_t \mollifytime{u}{i}\, \d x \d t \\
&\phantom{+}+ \eps \iint_{\Omega_T}  \alpha' \eta \varphi \u^q \tfrac{\left( \mollifytime{u}{i} - k \right)_+}{\left( \mollifytime{u}{i} - k \right)_+ + \sigma}  + \alpha \eta \varphi \boldsymbol{\mollifytime{u}{i}^q} \partial_t \tfrac{\left( \mollifytime{u}{i} - k \right)_+}{\left( \mollifytime{u}{i} - k \right)_+ + \sigma} \, \d x \d t \\
&\phantom{+}- \iint_{\Omega_T} \alpha' \eta \u^q \|\psi - \mollifytime{\psi}{i}\|_\infty\, \d x \d t + \eps \iint_{\Omega_T}  \alpha \eta \u^q \partial_t \varphi \tfrac{\left( \mollifytime{u}{i} - k \right)_+}{\left( \mollifytime{u}{i} - k \right)_+ + \sigma} \, \d x \d t \\
&\phantom{+} - \iint_{\Omega_T} \alpha\eta  (\u^q - \boldsymbol{\mollifytime{u}{i}^q}) \left(\partial_t \mollifytime{u}{i} - \eps \varphi \partial_t \tfrac{\left( \mollifytime{u}{i} - k \right)_+}{\left( \mollifytime{u}{i} - k \right)_+ + \sigma} \right)\, \d x \d t \\
&\leq \iint_{\Omega_T} \alpha' \eta  \left[ \tfrac{q}{q+1}|u|^{q+1} - \u^q \mollifytime{u}{i} + \tfrac{1}{q+1} |\mollifytime{u}{i}|^{q+1} \right]\, \d x \d t \\
&\phantom{+}+ \eps \iint_{\Omega_T}  \alpha' \eta \varphi \u^q \tfrac{\left( \mollifytime{u}{i} - k \right)_+}{\left( \mollifytime{u}{i} - k \right)_+ + \sigma}  + \alpha \eta \varphi \boldsymbol{\mollifytime{u}{i}^q} \partial_t \tfrac{\left( \mollifytime{u}{i} - k \right)_+}{\left( \mollifytime{u}{i} - k \right)_+ + \sigma} \, \d x \d t \\
&\phantom{+}- \iint_{\Omega_T} \alpha' \eta \u^q \|\psi - \mollifytime{\psi}{i}\|_\infty \, \d x \d t + \eps \iint_{\Omega_T}  \alpha \eta \u^q \partial_t \varphi \tfrac{\left( \mollifytime{u}{i} - k \right)_+}{\left( \mollifytime{u}{i} - k \right)_+ + \sigma} \, \d x \d t \\
&= \iint_{\Omega_T} \alpha' \eta \left[ \tfrac{q}{q+1}|u|^{q+1} - \u^q \mollifytime{u}{i} + \tfrac{1}{q+1} |\mollifytime{u}{i}|^{q+1} \right]\, \d x \d t \\
&\phantom{+}+ \eps \iint_{\Omega_T}  \partial_t \varphi (\u^q - \boldsymbol{\mollifytime{u}{i}^q})\tfrac{\left( \mollifytime{u}{i} - k \right)_+}{\left( \mollifytime{u}{i} - k \right)_+ + \sigma}  \, \d x \d t \\
&\phantom{+} + \eps \iint_{\Omega_T}  \boldsymbol{\mollifytime{u}{i}^q} \partial_t \left(  \varphi \tfrac{\left( \mollifytime{u}{i} - k \right)_+}{\left( \mollifytime{u}{i} - k \right)_+ + \sigma} \right) \, \d x \d t \\
&\phantom{+}- \iint_{\Omega_T} \alpha' \eta  \u^q \|\psi - \mollifytime{\psi}{i}\|_{\infty} \, \d x \d t. 
\end{align*}
By integration by parts, for the term on the third line of the right-hand side of the preceding inequality we obtain that
\begin{align*}
\iint_{\Omega_T}  &\boldsymbol{\mollifytime{u}{i}^q} \partial_t \left(  \varphi \tfrac{\left( \mollifytime{u}{i} - k \right)_+}{\left( \mollifytime{u}{i} - k \right)_+ + \sigma} \right) \, \d x \d t \\
&= - \iint_{\Omega_T} \varphi \partial_t \boldsymbol{\mollifytime{u}{i}^q} \tfrac{\left( \mollifytime{u}{i} - k \right)_+}{\left( \mollifytime{u}{i} - k \right)_+ + \sigma} \, \d x \d t \\
&= - \iint_{\Omega_T}\varphi \partial_t \left( k^q + q\int_k^{\mollifytime{u}{i}} \tfrac{|s|^{q-1}(s-k)_+}{(s-k)_+ + \sigma} \, \d s \right) \, \d x \d t \\
&= \iint_{\Omega_T}\partial_t \varphi \left( k^q + q\int_k^{\mollifytime{u}{i}} \tfrac{|s|^{q-1}(s-k)_+}{(s-k)_+ + \sigma} \, \d s \right) \, \d x \d t. 
\end{align*}
Further, note that the terms on the first, second and fourth line of the right-hand side of the penultimate inequality vanish when $i \to \infty$.
Thus, by combining the estimates and passing to the limit $i \to \infty$, we get that
\begin{align*}
\limsup_{i\to\infty}&\, \llangle \partial_t \u^q, \alpha\eta (v_i-u) \rrangle \\
&\leq \eps \iint_{\Omega_T} \partial_t \varphi \left( k^q + q\int_k^{u} \tfrac{|s|^{q-1}(s-k)_+}{(s-k)_+ + \sigma}\, \d s \right)  \, \d x\d t.
\end{align*}
Since $u_i \to u$ in $L^2(0,T;H^1(\Omega))$ as $i \to \infty$ and $\alpha \eta \equiv 1 $ in $\spt(\varphi)$, for the diffusion part of the variational inequality~\eqref{eq:obstacle-wsol} we compute that
\begin{align*}
\iint_{\Omega_T}&\alpha \mathbf{A}(x,t,\u^q, \nabla u) \cdot \nabla (\eta  (v_i - u) ) \, \d x \d t\\
&= \iint_{\Omega_T}\alpha \mathbf{A}(x,t,\u^q, \nabla u) \cdot \nabla (\eta  (\mollifytime{u}{i} - u) ) \, \d x \d t \\
 &\phantom{+}- \eps \iint_{\Omega_T}\alpha \mathbf{A}(x,t,\u^q, \nabla u) \cdot \nabla \left(\eta \varphi \tfrac{\left( \mollifytime{u}{i} - k \right)_+}{\left( \mollifytime{u}{i} - k \right)_+ + \sigma}\right) \, \d x \d t\\
 &\phantom{+}+ \|\psi - \mollifytime{\psi}{i}\|_\infty \iint_{\Omega_T}\alpha \mathbf{A}(x,t,\u^q, \nabla u) \cdot \nabla  \eta \, \d x \d t \\ 
&\xrightarrow{i\to \infty} - \eps \iint_{\Omega_T} \mathbf{A}(x,t,\u^q, \nabla u) \cdot \nabla \left(\varphi \tfrac{\left( u - k \right)_+}{\left( u - k \right)_+ + \sigma}\right) \, \d x \d t.
\end{align*}
Further, using~\eqref{eq:structure}$_1$, for the term on right-hand side we find that
\begin{align*}
- \eps &\iint_{\Omega_T} \mathbf{A}(x,t,\u^q, \nabla u) \cdot \nabla \left(\varphi \tfrac{\left( u - k \right)_+}{\left( u - k \right)_+ + \sigma}\right) \, \d x \d t \\
&= -\eps \iint_{\Omega_T} \tfrac{\left( u- k \right)_+}{\left( u- k \right)_+ + \sigma} \mathbf{A}(x,t,\u^q, \nabla u) \cdot  \nabla \varphi \, \d x \d t \\
&\phantom{+} - \eps \iint_{\Omega_T} \varphi \tfrac{\sigma}{\left[( u - k )_+ + \sigma\right]^2} \mathbf{A}(x,t,\u^q, \nabla u) \cdot \nabla (u-k)_+ \, \d x \d t \\
&\leq -\eps \iint_{\Omega_T} \tfrac{\left( u- k \right)_+}{\left( u- k \right)_+ + \sigma} \mathbf{A}(x,t,\u^q, \nabla u) \cdot  \nabla \varphi \, \d x \d t.
\end{align*}
By combining all the estimates and dividing by $\eps$ we infer that
\begin{align} \label{eq:obstacle_subsol}
0&\leq \limsup_{i\to \infty} \left( \llangle \partial_t \u^q, \alpha \eta (v_i-u) \rrangle + \iint_{\Omega_T} \alpha \mathbf{A}(x,t,u, \nabla u) \cdot \nabla (\eta (v_i-u)) \, \d x \d t \right) \nonumber \\
&\leq \iint_{\Omega_T} \partial_t \varphi \left( k^q + q\int_k^{u} \tfrac{|s|^{q-1}(s-k)_+}{(s-k)_+ + \sigma}\, \d s \right)  \, \d x\d t \nonumber \\
&\phantom{+} - \iint_{\Omega_T} \tfrac{\left( u- k \right)_+}{\left( u- k \right)_+ + \sigma} \mathbf{A}(x,t,\u^q, \nabla u) \cdot  \nabla \varphi \ \d x \dt.
\end{align}
Thus, finally passing to the limit $\sigma \downarrow 0$, we obtain that
\begin{align*}
\iint_{\Omega_T} -\partial_t \varphi \max\{u,k\}^q + \mathbf{A}(x,t,\max\{u,k\}^q, \nabla \max\{u,k\})  \cdot \nabla \varphi  \, \d x \d t \leq 0. 
\end{align*}
for any $k> \sup_{Q}\psi$.
If $k = \sup_Q \psi$, we use the dominated convergence theorem to
pass to the limit $k \downarrow \sup_{Q}\psi$, which completes the proof of $\eqref{lem:weak_subsol_2}$.

In order to show that $\eqref{lem:weak_subsol_1}$ holds, we define a comparison map
$$
v_i = \mollifytime{u}{i} + \eps \varphi \frac{(\mollifytime{u}{i}-k)_-}{(\mollifytime{u}{i}-k)_- + \sigma} +\| \psi - \mollifytime{\psi}{i}\|_\infty,
$$
where $\mollifytime{\cdot}{i}$ is defined as in the case~$\eqref{lem:weak_subsol_2}$ and $\varphi \in C_0^\infty(Q\cap \Omega_T, \R_{\geq 0})$ together with small enough $\eps > 0$.
Since we have that
$$
v_i \geq \mollifytime{u}{i} + \psi - \mollifytime{\psi}{i} \geq \psi \quad \text{ in } \Omega_T,
$$
$v_i$ is an admissible comparison map for any $k \in \R$.
We let $\eps \leq \frac{\sigma}{\|\varphi\|_{\infty}}$ such that 
\begin{align*} \label{e.pos_derivative_super}
(\u^q &- \boldsymbol{\mollifytime{u}{i}^q})
	\left[\partial_t \left( \mollifytime{u}{i} + \eps \varphi \tfrac{\left( \mollifytime{u}{i} - k \right)_-}{\left( \mollifytime{u}{i} - k \right)_- + \sigma} \right)- \eps \partial_t \varphi \tfrac{\left( \mollifytime{u}{i} - k \right)_-}{\left( \mollifytime{u}{i} - k \right)_- + \sigma} \right] \nonumber \\
&= \tfrac{1}{h_i} (\u^q - \boldsymbol{\mollifytime{u}{i}^q}) (u- \mollifytime{u}{i} ) \left(1- \eps \varphi \tfrac{\sigma}{ \left[\left( \mollifytime{u}{i} - k \right)_- + \sigma \right]^2} \chi_{\{\mollifytime{u}{i} \leq k\}}\right) \geq 0.
\end{align*}
Proceeding similar as in the proof of \eqref{lem:weak_subsol_2}, we conclude the proof of \eqref{lem:weak_subsol_1}.
\end{proof}

For a cylinder $Q_{R,S}(x_o,t_o)$ with a vertex $(x_o,t_o)\in S_T$ such that $0<S<t_o$, we define an extension of $\mathbf{A}(x,t,u,\xi)$ by
\[ \widetilde{\mathbf{A}}(x,t,u,\xi) :=
\begin{cases}
\mathbf{A}(x,t,u,\xi) \quad &\text{ in } Q_{R,S} \cap \Omega_T \\
\xi \quad &\text{ in } Q_{R,S} \setminus \Omega_T.
\end{cases}
\]
Observe that $\widetilde{\mathbf{A}}$ is a Carath\'eodory function satisfying~\eqref{eq:structure} with $C_o$ and $C_1$ replaced by $\min\{1,C_o\}$ and $\max\{1,C_1\}$ respectively.

In the following we show that we can extend $\min\{u,k\}$ and $\max\{u,k\}$ by constant $k$ from $Q_{R,S}\cap \Omega_T$ into $Q_{R,S}$ as super- and subsolution, respectively, to the obstacle free porous medium type equation with vector field $\widetilde{\mathbf{A}}$.  

In the proof we will exploit the following Hardy's inequality.

\begin{lemma} \label{lem:hardy}
Let $\Omega$ be a bounded open set in $\R^n$ which satisfies~\eqref{geometry} and $u \in H^1_0(\Omega)$. Then, there exists a constant $c$ depending only on $n$ and $\alpha_*$ such that
$$
\int_\Omega \left(\frac{|u(x)|}{\dist(x,\partial \Omega)} \right)^2 \, \d x \leq c \int_\Omega |\nabla u(x)|^2 \, \d x.
$$
\end{lemma}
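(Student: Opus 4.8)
\textbf{Proof proposal for Lemma~\ref{lem:hardy} (Hardy's inequality under the geometric density condition).}

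The plan is to reduce this to the well-known fact that the geometric density condition~\eqref{geometry} is a \emph{measure density condition} on the complement, which in turn yields a pointwise capacity density bound on $\partial\Omega$, and hence the uniform fatness (in the sense of the Wiener criterion) that is the classical sufficient condition for the Hardy inequality. Concretely, I would argue as follows. Fix $u \in H^1_0(\Omega)$ and assume first that $u \in C^\infty_0(\Omega)$; the general case then follows by density, since both sides of the asserted inequality are continuous with respect to the $H^1_0$-norm (the left side because $|u|/\dist(\cdot,\partial\Omega)$ is controlled by $\nabla u$ precisely by the inequality being proved, applied to elements of the approximating sequence). The decomposition is the standard Whitney one: write $\Omega = \bigcup_j Q_j$ as a union of dyadic Whitney cubes with $\diam(Q_j) \approx \dist(Q_j,\partial\Omega)$ and bounded overlap of the dilated cubes $2Q_j$. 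On each $Q_j$ one has $\dist(x,\partial\Omega) \approx \ell(Q_j)$, so it suffices to bound $\sum_j \ell(Q_j)^{-2}\int_{Q_j}|u|^2\,\dx$ by $c\int_\Omega|\nabla u|^2\,\dx$.

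The key local estimate is: for each Whitney cube $Q_j$, there is a point $y_j \in \partial\Omega$ with $\dist(Q_j,\partial\Omega) \approx |x - y_j|$ for $x \in Q_j$, and a ball $B_j = B_{c\ell(Q_j)}(y_j)$ whose concentric dilate contains $Q_j$ and for which~\eqref{geometry} gives $|B_j \setminus \Omega| \geq \alpha_* |B_j|$ (taking $\ell(Q_j)$ small enough, which is harmless since $\Omega$ is bounded and we may also absorb the finitely many large-scale cubes into the gradient term via the Poincar\'e inequality on $\Omega$ itself). Since $u \equiv 0$ on $B_j \setminus \Omega$, a set of measure at least $\alpha_*|B_j|$, the Poincar\'e–Sobolev inequality with zero boundary data on a definite portion of the ball gives
\begin{align*}
\fint_{B_j} |u|^2 \,\dx \leq c(n,\alpha_*)\, \ell(Q_j)^2 \fint_{B_j} |\nabla u|^2 \,\dx,
\end{align*}
hence $\ell(Q_j)^{-2}\int_{Q_j}|u|^2\,\dx \leq c \int_{B_j}|\nabla u|^2\,\dx$. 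Summing over $j$ and using that the balls $B_j$ have bounded overlap (which follows from the bounded overlap of the Whitney cubes together with $\ell(B_j)\approx\ell(Q_j)\approx\dist(Q_j,\partial\Omega)$) yields $\sum_j \ell(Q_j)^{-2}\int_{Q_j}|u|^2 \leq c(n,\alpha_*)\int_\Omega |\nabla u|^2$, which is the claim.

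The main obstacle — and the step deserving the most care — is the \emph{bounded overlap of the enlarged balls} $B_j$: unlike the Whitney cubes themselves, the balls $B_j$ centered at boundary points need not have controlled overlap for free, and one must exploit that each $B_j$ has radius comparable to its distance to $\partial\Omega$-projection together with a standard covering/volume-counting argument (a fixed point $x$ can lie in $B_j$ only for cubes $Q_j$ whose side lengths are within a bounded ratio of $\dist(x,\partial\Omega)$ and which are within bounded distance, and there are boundedly many such). An alternative that sidesteps Whitney decompositions entirely is to invoke the characterization of the Hardy inequality via uniform $p$-fatness of $\R^n\setminus\Omega$ (for $p=2$): condition~\eqref{geometry} implies $\R^n\setminus\Omega$ is uniformly $2$-fat because the $2$-capacity of $(\R^n\setminus\Omega)\cap \overline{B_\rho(x_o)}$ relative to $B_{2\rho}(x_o)$ is bounded below by a dimensional constant times that of a set of measure $\alpha_*|B_\rho|$, and uniform fatness of the complement is classically equivalent to the validity of~\eqref{lem:hardy} (see, e.g., the Lewis–Wannebo theory); citing this would shorten the argument considerably, at the cost of a less self-contained proof.
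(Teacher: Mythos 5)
The paper itself gives no proof of Lemma~\ref{lem:hardy}: it is taken from the literature, cf.\ Remark~\ref{rem:hardy}, where the sufficiency of uniform $2$-thickness of $\R^n\setminus\Omega$ is quoted from \cite{Lewis}. Your second, capacity-based route (measure density of the complement $\Rightarrow$ uniform $2$-fatness $\Rightarrow$ Hardy) is therefore essentially the route the paper relies on and is sound, except that ``classically equivalent'' overstates matters: uniform fatness implies the Hardy inequality (Lewis, Wannebo), but the converse fails in general (it does hold for $p=n=2$ by Ancona); you only need the forward implication anyway.

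Your main, Whitney-based argument, however, has a genuine gap at exactly the step you flag: the balls $B_j=B_{c\ell(Q_j)}(y_j)$ do \emph{not} have bounded overlap, and the justification you offer is incorrect. If $x\in B_j$, then $\dist(x,\partial\Omega)\le |x-y_j|\le c\,\ell(Q_j)$, which gives only the one-sided bound $\ell(Q_j)\gtrsim \dist(x,\partial\Omega)$; the asserted two-sided comparability fails because the $B_j$ are centered \emph{on the boundary}. Already for a half-space, a point $x$ near a boundary point $y$ lies in $B_j$ for (boundedly many cubes at) every dyadic scale between $\dist(x,\partial\Omega)$ and $\diam\Omega$ in the Whitney ``tower'' above $y$, so that
$$
\sum_j \chi_{B_j}(x) \approx \log\Big(\frac{\diam\Omega}{\dist(x,\partial\Omega)}\Big),
$$
which is unbounded. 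Summing your (correct) local estimates therefore only yields a Hardy inequality with a logarithmic loss, $\int_\Omega |u|^2\dist(x,\partial\Omega)^{-2}\dx \le c\int_\Omega \log\big(c\,\diam\Omega/\dist(x,\partial\Omega)\big)|\nabla u|^2\dx$, not the claim. The local ingredient (vanishing on a set of measure $\ge \alpha_*|B_j|$ plus the measure-density Poincar\'e inequality, modulo the harmless mismatch between $B_j$ and the dilate containing $Q_j$) is fine; what is missing is a summation mechanism exploiting $p=2>1$. The standard repairs are either the maximal-function proof --- the same ingredients give the pointwise bound $|u(x)|\le c(n,\alpha_*)\dist(x,\partial\Omega)\, M(|\nabla u|)(x)$ for a.e.\ $x$ with $2\dist(x,\partial\Omega)\le \rho_o$, and the $L^2$-boundedness of the Hardy--Littlewood maximal operator concludes --- or the fatness/self-improvement route of \cite{Lewis} that you list as an alternative. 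A minor further point: since \eqref{geometry} is only assumed for radii $\rho\le\rho_o$, the region $\{\dist(x,\partial\Omega)>\rho_o/2\}$ must be handled by Poincar\'e's inequality on $\Omega$, as you indicate, so the constant produced in this generality also depends on $\rho_o$ and $\diam\Omega$, not merely on $n$ and $\alpha_*$ as the statement suggests.
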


\begin{remark} \label{rem:hardy}
Observe that the positive geometric density condition is not the weakest possible assumption in Lemma~\ref{lem:hardy}. The result holds true under the assumption that $\R^n \setminus\Omega$ is uniformly $2$-thick, see e.g.~\cite{Lewis}. Thus, Lemma~\ref{lem:super_sub_extension} is also valid under the same assumption.
\end{remark}

\begin{lemma} \label{lem:super_sub_extension}
Suppose that $\Omega \subset \R^n$ is a bounded open set and satisfies \eqref{geometry}. Let $(x_o,t_o) \in S_T$ and consider the cylinder $Q_{R,S}(x_o,t_o)$ with $R>0$ and $0<S<t_o$.
Let $u$ be a weak solution to the obstacle problem according to Definition~\ref{d.obstacle-wsol} with obstacle $\psi$ satisfying~\eqref{eq:psi_conds} and lateral boundary values $g$ satisfying~\eqref{eq:g-conds}.
\begin{enumerate}
\item If $k \leq \inf_{S_T \cap Q_{R,S}} g$, then
$$
	u_k :=
	\left\{
	\begin{array}{ll}
		\min \{ u,k \}
		& \text{in } \; Q_{R,S} \cap \Omega_T, \\[5pt]
		k
		& \text{in } \; Q_{R,S} \setminus \Omega_T.
	\end{array}
	\right.
$$
is a weak supersolution in $Q_{R,S}$ in the sense of Definition~\ref{def:localsol_pme} with $\mathbf{A}$ replaced by the vector field $\widetilde{\mathbf{A}}$.
\item If $k \geq \max \{ \sup_{Q_{R,S} \cap \Omega_T} \psi, \sup_{Q_{R,S} \cap S_T} g \}$, then 
$$
	u_k =
	\left\{
	\begin{array}{ll}
		\max \{ u,k \}
		& \text{in } \; Q_{R,S} \cap \Omega_T, \\[5pt]
		k
		& \text{in } \; Q_{R,S} \setminus \Omega_T.
	\end{array}
	\right.
$$
is a weak subsolution in $Q_{R,S}$ in the sense of Definition~\ref{def:localsol_pme} with $\mathbf{A}$ replaced by the vector field $\widetilde{\mathbf{A}}$.
\end{enumerate}
\end{lemma}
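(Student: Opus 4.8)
The plan is to prove part (1) and obtain part (2) by the symmetric argument. Write $Q:=Q_{R,S}(x_o,t_o)$, $v:=\min\{u,k\}$, $w:=(k-u)_+=k-v\ge0$ and $W:=\boldsymbol k^q-\boldsymbol v^q$, which is $\ge0$ by monotonicity of $b\mapsto\boldsymbol b^q$. First I would check that $u_k$ has the regularity required in Definition~\ref{def:localsol_pme}: since $k\le\inf_{S_T\cap Q}g$, the trace of $\min\{u,k\}(\cdot,t)$ on $\partial\Omega\cap B_R(x_o)$ equals $k$, so $u_k$ and its constant extension match along the interface and glue without a singular part in the gradient. Next, because $\widetilde{\mathbf A}(x,t,\cdot,0)\equiv0$, $u_k\equiv k$ and $\nabla u_k\equiv0$ on $Q\setminus\Omega_T$, and because $\iint_Q\partial_t\varphi\,\dx\dt=0$ for any $\varphi\in C_0^\infty(Q,\R_{\ge0})$, splitting the integral over $Q\cap\Omega_T$ and $Q\setminus\Omega_T$ and inserting $\boldsymbol v^q=\boldsymbol k^q-W$ reduces the claim of (1) to
\[
\iint_{Q\cap\Omega_T}\partial_t\varphi\,W+\mathbf A(x,t,\boldsymbol v^q,\nabla v)\cdot\nabla\varphi\,\dx\dt\ge0\qquad\text{for every }\varphi\in C_0^\infty(Q,\R_{\ge0}).
\]

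\textbf{Interior inequality and cut-off.} By Lemma~\ref{lem:weak_subsol}~\eqref{lem:weak_subsol_1}, $v$ is a weak supersolution to \eqref{eq:pde}$_1$ in $Q\cap\Omega_T$; substituting $\boldsymbol v^q=\boldsymbol k^q-W$ in the defining inequality and using $\iint\partial_t\phi\,\dx\dt=0$, this is exactly the displayed inequality but restricted to $\phi\in C_0^\infty(Q\cap\Omega_T,\R_{\ge0})$, i.e.\ to test functions vanishing near $S_T$. The real point is therefore the passage to $\varphi$ which need not vanish on $S_T\cap Q$. For this I would use $\zeta_\eps(x):=\min\{\dist(x,\partial\Omega)/\eps,1\}$, which is Lipschitz and vanishes on $\partial\Omega$, so that $\varphi\zeta_\eps$ is admissible in the interior inequality (after a routine smooth approximation, using $\zeta_\eps\in H^1_0(\Omega)$). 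Inserting $\varphi\zeta_\eps$ and using $\partial_t(\varphi\zeta_\eps)=\zeta_\eps\partial_t\varphi$, $\nabla(\varphi\zeta_\eps)=\zeta_\eps\nabla\varphi+\varphi\nabla\zeta_\eps$ gives
\[
0\le\iint_{Q\cap\Omega_T}\zeta_\eps\big(\partial_t\varphi\,W+\mathbf A\cdot\nabla\varphi\big)\,\dx\dt+\iint_{Q\cap\Omega_T}\varphi\,\mathbf A(x,t,\boldsymbol v^q,\nabla v)\cdot\nabla\zeta_\eps\,\dx\dt.
\]
Since $0\le\zeta_\eps\uparrow1$ on $\Omega$, $W\in L^\infty$ and $|\mathbf A\cdot\nabla\varphi|\le C_1|\nabla v||\nabla\varphi|\in L^1$, the first integral converges to $\iint_{Q\cap\Omega_T}\partial_t\varphi\,W+\mathbf A\cdot\nabla\varphi$ as $\eps\downarrow0$ by dominated convergence. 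Everything then hinges on showing that the second, cut-off error integral tends to $0$.

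\textbf{Killing the error term — the main obstacle.} This is where the hypotheses on $\Omega$, $g$ and $k$ enter, and it is the technical core. Because $u-g\in L^2(0,T;H_0^1(\Omega))$, the trace of $u(\cdot,t)$ on $\partial\Omega$ coincides with that of $g(\cdot,t)$ for a.e.\ $t$, so $k\le\inf_{S_T\cap Q}g$ forces $w(\cdot,t)=(k-u(\cdot,t))_+$ to have vanishing trace on $\partial\Omega\cap B_R(x_o)$. Choosing $\chi\in C_0^\infty(B_R(x_o),[0,1])$ equal to $1$ on a neighbourhood of the spatial support of $\varphi$, the slices $\chi w(\cdot,t)$ therefore lie in $H_0^1(\Omega)$, so Hardy's inequality (Lemma~\ref{lem:hardy}) applies to them and yields, together with $u\in L^2(0,T;H^1(\Omega))$, that $\iint_{Q\cap\Omega_T}\big(\chi w/\dist(\cdot,\partial\Omega)\big)^2\,\dx\dt\le c\iint_{Q\cap\Omega_T}|\nabla(\chi w)|^2\,\dx\dt<\infty$. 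Exploiting this integrability, together with $|\mathbf A|\le C_1|\nabla v|=C_1|\nabla w|$, the identity $\nabla w=\nabla(\chi w)$ on $\spt\varphi$, and the structure of $\nabla\zeta_\eps$ (supported in $\{\dist(\cdot,\partial\Omega)<\eps\}$, of size $\eps^{-1}$ there), one shows that the cut-off error integral tends to $0$ as $\eps\downarrow0$. Feeding this back gives $\iint_{Q\cap\Omega_T}\partial_t\varphi\,W+\mathbf A\cdot\nabla\varphi\ge0$ for every admissible $\varphi$, which is the reduced inequality and completes part (1). I expect this last estimate to be the hard step: the error integral is \emph{not} small simply because $v$ solves the equation (it need not vanish for a generic sub/supersolution), and the whole purpose of the sign condition on $k$ together with Hardy's inequality is to exploit that the relevant truncation of $u$ vanishes on the lateral boundary.

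\textbf{Part (2).} The argument for part (2) is identical after replacing $v$ by $\max\{u,k\}$ — which is a weak subsolution in $Q\cap\Omega_T$ by Lemma~\ref{lem:weak_subsol}~\eqref{lem:weak_subsol_2}, and this is precisely why $k\ge\sup_{Q\cap\Omega_T}\psi$ is imposed — taking $w:=(u-k)_+$, $W:=\boldsymbol v^q-\boldsymbol k^q\ge0$, and reversing all inequalities; the extra hypothesis $k\ge\sup_{S_T\cap Q}g$ is exactly what makes $w$ vanish on the lateral boundary near $x_o$, so the same Hardy estimate disposes of the error term. Finally, the geometric density condition \eqref{geometry} is used only to have Hardy's inequality available, cf.\ Remark~\ref{rem:hardy}, so under the weaker uniform $2$-thickness of $\R^n\setminus\Omega$ the whole argument goes through verbatim.
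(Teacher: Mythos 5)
Your overall architecture is the same as the paper's: reduce the claim to showing that the interior inequality for the truncation survives for test functions $\varphi\in C_0^\infty(Q_{R,S},\R_{\geq0})$ that need not vanish on $S_T\cap Q_{R,S}$, insert a spatial cutoff vanishing on $\partial\Omega$ (your $\zeta_\eps$, the paper's $\eta_\lambda$), and dispose of the commutator term via Hardy's inequality (Lemma~\ref{lem:hardy}), which is exactly where the sign condition on $k$ relative to $g$ enters; the splitting over $Q_{R,S}\cap\Omega_T$ and $Q_{R,S}\setminus\Omega_T$ and the symmetric treatment of the two cases also match the paper.

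However, the step you yourself flag as the hard one has a genuine gap. Because you start from the \emph{conclusion} of Lemma~\ref{lem:weak_subsol}, i.e.\ after the regularization parameter $\sigma$ has already been sent to zero inside that proof, your error term is $\iint_{Q\cap\Omega_T}\varphi\,\mathbf A(x,t,\boldsymbol v^q,\nabla v)\cdot\nabla\zeta_\eps\,\dx\dt$, which contains no factor of $w=(k-u)_+$, and the ingredients you list ($\nabla w\in L^2$, $\chi w/\dist(\cdot,\partial\Omega)\in L^2$ by Hardy, $|\nabla\zeta_\eps|\leq\eps^{-1}$ on the $\eps$-collar) do not imply that it vanishes. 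They would only control $\eps^{-1}\iint_{\{\dist<\eps\}}|\nabla w|\,\dx\dt$ if the collar had measure $O(\eps)$, which \eqref{geometry} does not provide (snowflake-type domains satisfy \eqref{geometry} while their inner collar has measure of order $\eps^{2-d}$ with $d>1$); and even in one dimension a function such as $w(x)=x^{7/5}\sin(x^{-1/2})$ has $w'\in L^2$ and $w/x\in L^2$ near $0$, yet $\eps^{-1}\int_0^\eps|w'|\,\dx\to\infty$, so no estimate of the claimed form can follow from those facts alone. The paper avoids precisely this: it re-enters the proof of Lemma~\ref{lem:weak_subsol} and inserts $\varphi\eta_\lambda$ into the $\sigma$-regularized inequality \eqref{eq:obstacle_subsol}, whose cutoff error carries the factor $\tfrac{(u-k)_+}{(u-k)_++\sigma}\leq\sigma^{-1}(u-k)_+$; this supplies the numerator needed for Hardy (since $|\nabla\eta_\lambda|\leq c/\lambda\leq c/\dist(x,\partial\Omega)$ on the collar), giving a bound $\tfrac{c}{\sigma}\,\|\nabla u\|_{L^2((\Omega\setminus\Omega_\lambda)\times(0,T))}\,\|\nabla(\varphi(u-k)_+)\|_{L^2(\Omega_T)}\to0$ as $\lambda\to0$ with $\sigma$ fixed, and only afterwards is $\sigma\downarrow0$ taken. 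To repair your argument you would either have to reinstate such a $u$-dependent factor in the test function at the level of Definition~\ref{def:localsol_pme} (which forces you to redo the time-mollification work, i.e.\ essentially the paper's computation) or impose an additional regularity assumption on $\partial\Omega$; as written, the key estimate does not go through.
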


\begin{proof}
We start with the case $(2)$. For the proof of the property that $u_k(\cdot,t) \in H^1(B_R(x_o))$ for a.e. $t \in (t_o-S,t_o)$, see e.g.~\cite[Lemma 2.1]{GLL}. We show that we can use $\varphi \in C_0^\infty(Q_{R,S}, \R_{\geq0})$ in Lemma~\ref{lem:weak_subsol}, which does not necessarily vanish on the boundary of $\Omega$. As a starting point, we take inequality~\eqref{eq:obstacle_subsol}, which reads as
\begin{align*} 
0&\leq \limsup_{i\to \infty} \left( \llangle \partial_t \u^q, \alpha \eta (v_i-u) \rrangle + \iint_{\Omega_T} \alpha \mathbf{A}(x,t,\u^q, \nabla u) \cdot \nabla( \eta (v_i-u)) \, \d x \d t \right) \nonumber \\
&\leq \iint_{\Omega_T} \partial_t \varphi \left( k^q + q\int_k^{u} \tfrac{|s|^{q-1}(s-k)_+}{(s-k)_+ + \sigma}\, \d s \right)  \, \d x\d t \\
&\phantom{+} - \iint_{\Omega_T} \tfrac{\left( u- k \right)_+}{\left( u- k \right)_+ + \sigma} \mathbf{A}(x,t,\u^q, \nabla u) \cdot  \nabla \varphi \, \d x \d t \\
&= \mathrm{I} + \mathrm{II}. 
\end{align*}

Denote the inner parallel set by $\Omega_\lambda := \{x\in \Omega : \dist(x,\partial \Omega) \geq \lambda\}$. Let $\eta_\lambda \in C_0^\infty(\Omega,[0,1])$ such that $\eta_\lambda = 1$ in $\Omega_\lambda$ and $|\nabla \eta_\lambda| \leq \frac{c}{\lambda}$ with a numerical constant $c >0$. Now we plug $\varphi \eta_\lambda$ in the place of $\varphi$, which gives us that
\begin{align*}
\mathrm{II} = - &\iint_{\Omega_T} \eta_\lambda \tfrac{\left( u- k \right)_+}{\left( u- k \right)_+ + \sigma} \mathbf{A}(x,t,\u^q, \nabla u) \cdot  \nabla \varphi \, \d x \d t \\
&- \iint_{\Omega_T}\varphi \tfrac{\left( u- k \right)_+}{\left( u- k \right)_+ + \sigma} \mathbf{A}(x,t,\u^q, \nabla u) \cdot  \nabla \eta_\lambda \, \d x \d t.
\end{align*}
We estimate the second term above by
\begin{align*}
- &\iint_{\Omega_T}\varphi \tfrac{\left( u- k \right)_+}{\left( u- k \right)_+ + \sigma} \mathbf{A}(x,t,\u^q, \nabla u) \cdot  \nabla \eta_\lambda \, \d x \d t \\
&\leq \frac{C_1}{\sigma}\iint_{\Omega_T} \varphi (u-k)_+ |\nabla u| |\nabla \eta_\lambda| \, \d x \d t \\
&\leq \frac{c(C_1)}{\sigma} \left( \iint_{(\Omega \setminus \Omega_\lambda) \times (0,T)}  |\nabla u|^2 \, \d x \d t \right)^\frac{1}{2} \left(\iint_{(\Omega \setminus \Omega_\lambda) \times (0,T)}  \varphi^2 \frac{(u-k)_+^2}{\lambda^2} \, \d x \d t \right)^\frac{1}{2} \\
&\leq \frac{c(C_1)}{\sigma} \left( \iint_{(\Omega \setminus \Omega_\lambda) \times (0,T)} |\nabla u|^2 \, \d x \d t \right)^\frac{1}{2} \left(\iint_{(\Omega \setminus \Omega_\lambda) \times (0,T)} \frac{ [\varphi(u-k)_+]^2}{\dist(x,\partial \Omega)^2} \, \d x \d t \right)^\frac{1}{2} \\
&\leq \frac{c(C_1)}{\sigma} \left( \iint_{(\Omega \setminus \Omega_\lambda) \times (0,T)} |\nabla u|^2 \, \d x \d t \right)^\frac{1}{2} \left( \iint_{\Omega_T} \frac{ [\varphi(u-k)_+]^2}{\dist(x,\partial \Omega )^2} \, \d x \d t \right)^\frac{1}{2} \\
&\leq \frac{c(C_1,n,\alpha_*)}{\sigma} \left( \iint_{(\Omega \setminus \Omega_\lambda) \times (0,T)} |\nabla u|^2 \, \d x \d t \right)^\frac{1}{2} \left(\iint_{\Omega_T } |\nabla (\varphi (u-k)_+ )|^2 \, \d x \d t \right)^\frac{1}{2} \\
&\xrightarrow{\lambda \to 0} 0,
\end{align*}
where we used H\"older's inequality in the third line and Hardy's inequality slice-wise in the sixth line. Observe that Hardy's inequality is applicable, since $k \geq \sup_{Q_{R,S} \cap S_T} g$ and thus $\varphi(\cdot,t) (u(\cdot,t) -k)_+\in H^1_0(\Omega)$ for a.e. $t \in (0,T)$. As $\eta_\lambda \to \chi_\Omega$ pointwise when $\lambda \to 0$, we obtain that $\mathrm{I}$ and the term that is left in $\mathrm{II}$ converge to the corresponding integral over $\Omega_T$.

By passing to the limit $\sigma \downarrow 0$, we obtain that
\begin{align*}
\iint_{Q_{R,S} \cap\Omega_T} - &\partial_t \varphi \max\{u,k\}^q + \mathbf{A}(x,t,\max\{u,k\}^q, \nabla\max\{u,k\})  \cdot \nabla \varphi  \, \d x \d t \leq 0
\end{align*}
for $\varphi \in C_0^\infty(Q_{R,S},\R_{\geq0})$. 
Since $u_k = k$ in $Q_{R,S} \setminus \Omega_T$, there holds
\begin{align*}
\iint_{Q_{R,S} \setminus \Omega_T} - \partial_t \varphi \boldsymbol{k}^q +  \widetilde{\mathbf{A}}(x,t,\boldsymbol{k}^q, \nabla k) \cdot \nabla \varphi   \, \d x \d t = 0.
\end{align*}
By summing up the two (in)equalities above it follows that 
$$
\iint_{Q_{R,S}} -\partial_t \varphi \boldsymbol{u^q_k} + \widetilde{\mathbf{A}}(x,t,\boldsymbol{u^q_k},\nabla u_k) \cdot \nabla \varphi  \, \d x \d t \leq 0
$$
for every $\varphi \in C_0^\infty(Q_{R,S};\R_{\geq 0})$, which completes the proof for case $(2)$. The case $(1)$ can be treated analogously.
\end{proof}

\subsection{Boundedness of solutions to the obstacle problem} \label{subsec:boundedness}
Next, we show that weak solutions to the obstacle problem are bounded if the boundary data are bounded and the obstacle is continuous up to the boundary. First, we state a maximum principle for weak subsolutions to (obstacle free) porous medium type equations.

\begin{lemma} \label{lem:maximum-principle}
Let $u \in L^2(0,T;H^1(\Omega)) \cap L^q(\Omega_T)$ be a weak subsolution to~\eqref{eq:pde}$_1$ and $k \in \R$. If $(u - k)_+(\cdot,t) \in H^1_0(\Omega)$ for a.e. $t \in (0,T)$ and 
\begin{equation} \label{eq:sub-initial-max}
\bint_0^h \int_\Omega (\u^q-\boldsymbol{k}^q)_+ \, \d x \d t \xrightarrow{h \downarrow 0} 0,
\end{equation}
then 
$$
u \leq k \quad \text{ a.e. in } \Omega_T.
$$
\end{lemma}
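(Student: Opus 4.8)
The plan is to run the standard parabolic maximum-principle energy argument: test the subsolution inequality~\eqref{eq:localsol_pme} with a bounded truncation of $(u-k)_+$, discard the spatial term by ellipticity, and read off from the time term that a nonnegative functional of $(u-k)_+$ is controlled by its trace as $t\downarrow 0$, which vanishes by~\eqref{eq:sub-initial-max}. Fix $M>0$, set $\beta_M(s):=\min\{(s-k)_+,M\}$ and $\mathcal{B}_{k,M}(s):=\int_k^{\max\{s,k\}}q|r|^{q-1}\beta_M(r)\,\dr$. Then $\mathcal{B}_{k,M}\ge 0$, $\mathcal{B}_{k,M}(s)>0$ whenever $s>k$, and, since $\tfrac{\d}{\dr}\boldsymbol{r}^q=q|r|^{q-1}$,
\[
0\le\mathcal{B}_{k,M}(s)\le M\int_k^{\max\{s,k\}}q|r|^{q-1}\,\dr=M(\boldsymbol{s}^q-\boldsymbol{k}^q)_+\qquad\text{for all }s\in\R.
\]

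Because $(u-k)_+(\cdot,t)\in H^1_0(\Omega)$ for a.e.\ $t$, also $\beta_M(u)(\cdot,t)\in H^1_0(\Omega)$ for a.e.\ $t$, with $\nabla\beta_M(u)=\chi_{\{0<u-k<M\}}\nabla u$ a.e.; since $|\mathbf{A}|\le C_1|\nabla u|$ and $\nabla u\in L^2(\Omega_T)$, a density argument enlarges the admissible test functions in~\eqref{eq:localsol_pme} to $H^1_0(\Omega)$-valued ones. Combining this with a standard time regularization of~\eqref{eq:localsol_pme} (Steklov averages, or the mollification $\mollifytime{\cdot}{h}$ of Lemma~\ref{lem:time_mollification}), which gives $\partial_t\u^q$ a rigorous meaning, I would test with $\beta_M(u)$ on a time slab $\Omega\times(\tau_1,\tau_2)$, $0<\tau_1<\tau_2<T$. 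The parabolic contribution is treated via the chain-rule identity $\beta_M(u)\,\partial_t\u^q=q|u|^{q-1}\beta_M(u)\,\partial_t u=\partial_t\mathcal{B}_{k,M}(u)$ (justified at the regularized level), and after passing to the limit in the regularization one arrives at
\[
\int_\Omega\mathcal{B}_{k,M}(u(\cdot,\tau_2))\,\dx+\iint_{\Omega\times(\tau_1,\tau_2)}\mathbf{A}(x,t,\u^q,\nabla u)\cdot\nabla u\,\chi_{\{0<u-k<M\}}\,\dx\dt\le\int_\Omega\mathcal{B}_{k,M}(u(\cdot,\tau_1))\,\dx
\]
for a.e.\ $0<\tau_1<\tau_2<T$. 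By~\eqref{eq:structure}$_1$ the double integral is nonnegative, hence $\int_\Omega\mathcal{B}_{k,M}(u(\cdot,\tau_2))\,\dx\le\int_\Omega\mathcal{B}_{k,M}(u(\cdot,\tau_1))\,\dx$.

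To conclude I let $\tau_1\downarrow 0$. Since $\bint_0^h\int_\Omega(\u^q-\boldsymbol{k}^q)_+\,\dx\dt\to0$ and the integrand $t\mapsto\int_\Omega(\u^q-\boldsymbol{k}^q)_+(\cdot,t)\,\dx$ is nonnegative, there is a sequence $\tau_{1,j}\downarrow 0$, which may be chosen among the $\tau_1$ for which the previous inequality holds, such that $\int_\Omega(\u^q-\boldsymbol{k}^q)_+(\cdot,\tau_{1,j})\,\dx\to 0$; by the bound on $\mathcal{B}_{k,M}$, then $\int_\Omega\mathcal{B}_{k,M}(u(\cdot,\tau_{1,j}))\,\dx\le M\int_\Omega(\u^q-\boldsymbol{k}^q)_+(\cdot,\tau_{1,j})\,\dx\to 0$. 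Therefore $\int_\Omega\mathcal{B}_{k,M}(u(\cdot,\tau_2))\,\dx\le 0$ for a.e.\ $\tau_2\in(0,T)$, and since $\mathcal{B}_{k,M}\ge 0$ with $\mathcal{B}_{k,M}(s)>0$ for every $s>k$, this forces $u(\cdot,\tau_2)\le k$ a.e.\ in $\Omega$ for a.e.\ $\tau_2$, i.e.\ $u\le k$ a.e.\ in $\Omega_T$; any fixed $M>0$ suffices.

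I expect the only genuinely delicate point to be the rigorous treatment of the parabolic term: as $\u^q$ carries no a priori time derivative, one must regularize~\eqref{eq:localsol_pme} in time, justify the chain-rule identity for $\partial_t\mathcal{B}_{k,M}$ and the passage to the limit in the regularization parameter, and --- this is exactly why one truncates at level $M$ --- estimate the initial slice by $M\int_\Omega(\u^q-\boldsymbol{k}^q)_+(\cdot,\tau_1)\,\dx$ instead of by the a priori uncontrolled $\int_\Omega(u-k)_+(\u^q-\boldsymbol{k}^q)_+(\cdot,\tau_1)\,\dx$, since $u$ is only assumed to lie in $L^q(\Omega_T)$ and need not be bounded near $t=0$. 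Enlarging the test-function class to $H^1_0(\Omega)$-valued functions, needed because $(u-k)_+$ need not be compactly supported in $\Omega$, is routine given the linear growth of $\mathbf{A}$.
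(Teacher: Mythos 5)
Your proposal is correct and takes essentially the same approach as the paper: a time-mollified form of \eqref{eq:localsol_pme} tested with a bounded nondecreasing truncation of $(u-k)_+$, ellipticity \eqref{eq:structure}$_1$ to discard the diffusion term, the monotonicity/chain-rule trick to turn the parabolic term into a time derivative of a nonnegative potential, and \eqref{eq:sub-initial-max} to annihilate the initial contribution. The only cosmetic differences are your cutoff $\min\{(u-k)_+,M\}$ in place of the paper's $\tfrac{(u-k)_+}{(u-k)_++\sigma}$ (the paper lets $\sigma\downarrow 0$ to recover exactly $(\u^q-\boldsymbol{k}^q)_+$) and your selection of good time slices $\tau_{1,j}\downarrow 0$ where the paper works directly with averaged cutoffs near $t=0$.
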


\begin{proof}
Fix $\eps > 0$ and let $t_1 = \eps /2$ and $t_2 \in (4\eps,T)$. We can write the mollified weak formulation of~\eqref{eq:localsol_pme} for weak subsolutions as
$$
\iint_{\Omega_{t_1,t_2}} \partial_t \mollifytime{\u^q}{h} \varphi +  \mollifytime{\mathbf{A}(x,t,\u^q,\nabla u)}{h} \cdot \nabla \varphi \, \d x \d t \leq \frac 1 h \int_\Omega \u^q(t_1)  \int_{t_1}^{t_2} e^\frac{t_1-s}{h} \varphi(x,s) \, \d s \d x,
$$
where $\mollifytime{\cdot}{h}$ denotes the mollification in~\eqref{e.time_moll_g} with initial values zero. Let us use a test function $\varphi = \alpha \frac{(u-k)_+}{(u-k)_+ + \sigma}$, where $\sigma > 0$ and $\alpha = \alpha(t) \leq 1$ is a piecewise affine approximation of $\chi_{(\eps, \tau)}(t)$ with $\tau \in (4\eps,t_2)$. Observe that $\varphi$ is an admissible test function, since $\varphi \leq 1$ and $\varphi \in L^2(0,T;H^1_0(\Omega))$. When passing to the limit $h \to \infty$, the right hand side vanishes.
Further, by structure condition~\eqref{eq:structure}$_1$ for the divergence part on the left-hand side of the preceding inequality we obtain that
$$
\sigma \iint_{\Omega_T}\alpha \mathbf{A}(x,t,\u^q,\nabla u) \cdot \frac{\nabla u \chi_{\{u > k\}}}{[(u - k)_++\sigma]^2} \, \d x \d t  \geq 0.
$$
Moreover, by the fact that the map $s \mapsto \frac{(s - k)_+}{(s - k)_+ + \sigma}$ is increasing, and by integration by parts, we estimate the parabolic part on the left-hand side of the penultimate inequality by
\begin{align*}
\iint_{\Omega_T} \partial_t \mollifytime{\u^q}{h} \varphi  \, \d x \d t &\geq \iint_{\Omega_T} \alpha \partial_t \mollifytime{\u^q}{h} \frac{(\mollifytime{\u^q}{h}^\frac{1}{q} - k)_+}{(\mollifytime{\u^q}{h}^\frac{1}{q} - k)_+ + \sigma} \, \d x \d t \\
&= \iint_{\Omega_T} \alpha \partial_t \int_{\boldsymbol{k}^q}^{\mollifytime{\u^q}{h}} \frac{(\boldsymbol{s}^\frac{1}{q} - k)_+}{(\boldsymbol{s}^\frac{1}{q} - k)_+ + \sigma} \, \d s \, \d x \d t \\
&= q \iint_{\Omega_T} \alpha \partial_t \int_{k}^{\mollifytime{\u^q}{h}^\frac{1}{q}} \frac{|s|^{q-1}(s - k)_+}{(s - k)_+ + \sigma} \, \d s \, \d x \d t  \\
&= -q \iint_{\Omega_T} \alpha' \int_{k}^{\mollifytime{\u^q}{h}^\frac{1}{q}} \frac{|s|^{q-1}(s - k)_+}{(s - k)_+ + \sigma} \, \d s \, \d x \d t  \\
&\xrightarrow{h\to 0}- q \iint_{\Omega_T} \alpha' \int_{k}^{u} \frac{|s|^{q-1}(s - k)_+}{(s - k)_+ + \sigma} \, \d s \, \d x \d t  \\
&\xrightarrow{\sigma\to 0}- \iint_{\Omega_T} \alpha' (\u^q-\boldsymbol{k}^q)_+ \, \d x \d t.
\end{align*}
By collecting all the estimates above, we obtain that
$$
\frac{1}{\eps} \int_{\tau-\eps}^\tau \int_\Omega (\u^q-\boldsymbol{k}^q)_+ \, \d x \d t \leq \frac{2}{2\eps} \int_{0}^{2\eps} \int_\Omega (\u^q-\boldsymbol{k}^q)_+ \, \d x \d t.
$$
By passing to the limit $\eps \to 0$ and using initial condition~\eqref{eq:sub-initial-max}, we have $u(x,\tau) \leq k$ for a.e. $x \in \Omega$. Since this holds for a.e. $\tau \in (0,t_2)$ and $t_2 \in (0,T)$ is arbitrary, we have that $u \leq k$ a.e. in $\Omega_T$.
\end{proof}

\begin{lemma} \label{lem:boundedness}
Let $u$ be a weak solution to the obstacle problem according to Definition~\ref{d.obstacle-wsol} with obstacle $\psi \in C(\overline{\Omega_T})$, lateral boundary values $g \in L^2(0,T;H^1(\Omega)) \cap L^\infty(\Omega_T)$ with $g \geq \psi$ a.e. in $\Omega_T$ and initial boundary values $g_o \in L^\infty(\Omega)$ with $g_o \geq \psi(\cdot,0)$ a.e. on $\Omega$. Then $u \in L^\infty(\Omega_T)$.
\end{lemma}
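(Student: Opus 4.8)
The plan is to bound $u$ below by the obstacle and above by the maximum principle of Lemma~\ref{lem:maximum-principle}, applied to a truncation of $u$ that Lemma~\ref{lem:weak_subsol} turns into an obstacle-free subsolution. Put
\[
k := \max\Big\{ \|\psi\|_{L^\infty(\overline{\Omega_T})},\ \|g\|_{L^\infty(\Omega_T)},\ \|g_o\|_{L^\infty(\Omega)} \Big\},
\]
which is finite since $\psi\in C(\overline{\Omega_T})$ and $\overline{\Omega_T}$ is compact. The lower bound is immediate: $u\in K_\psi(\Omega_T)$ gives $u\geq\psi\geq-\|\psi\|_{L^\infty(\overline{\Omega_T})}\geq-k$ a.e.\ in $\Omega_T$, so it remains to prove $u\leq k$ a.e.

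Set $w:=\max\{u,k\}=k+(u-k)_+$. Since $k\geq\sup_{\overline{\Omega_T}}\psi$, Lemma~\ref{lem:weak_subsol}\,\eqref{lem:weak_subsol_2} shows that $w$ is a weak subsolution to~\eqref{eq:pde}$_1$ in $Q_{R,S}(z_o)\cap\Omega_T$ for every cylinder with vertex $z_o\in\Omega_T$; covering $\spt\varphi$ of a given $\varphi\in C_0^\infty(\Omega_T,\R_{\geq0})$ by finitely many such cylinders contained in $\Omega_T$ and splitting $\varphi$ by a subordinate partition of unity into non-negative pieces, one sums the cylinder-wise inequalities to conclude that $w$ is a local weak subsolution to~\eqref{eq:pde}$_1$ in all of $\Omega_T$ in the sense of Definition~\ref{def:localsol_pme}. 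Moreover $w\in L^2(0,T;H^1(\Omega))\cap C((0,T);L^{q+1}(\Omega))$.

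Next I check the hypotheses of Lemma~\ref{lem:maximum-principle} for $w$ at level $k$; to avoid integrability issues of $u$ as $t\uparrow T$, this is done on $\Omega_{T'}:=\Omega\times(0,T')$ for arbitrary $T'\in(0,T)$ (the maximum principle applies verbatim with $T$ replaced by $T'$). First, $w\leq|u|+k$ and $u\in L^{q+1}(\Omega_{T'})$: on $[\delta,T']$ the map $t\mapsto\|u(t)\|_{L^{q+1}(\Omega)}$ is continuous hence bounded, while $\int_0^\delta\!\int_\Omega|u|^{q+1}\,\dx\dt<\infty$ follows from~\eqref{eq:obstacle-initial-q+1} together with $g_o\in L^\infty(\Omega)$; thus $w\in L^2(0,T';H^1(\Omega))\cap L^q(\Omega_{T'})$. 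Second, since $k\geq\|g\|_{L^\infty(\Omega_T)}$ we have $0\leq(w-k)_+=(u-k)_+\leq(u-g)_+$ a.e., and because $u-g\in L^2(0,T;H^1_0(\Omega))$ the slices $(u-g)_+(\cdot,t)$ and, by a standard comparison in $H^1_0$, also $(u-k)_+(\cdot,t)$ lie in $H^1_0(\Omega)$ for a.e.\ $t$. Third, since $w\geq k\geq\|g_o\|_{L^\infty(\Omega)}$, one has $(\boldsymbol{w}^q-\boldsymbol{k}^q)_+=(\u^q-\boldsymbol{k}^q)_+\leq(\u^q-\boldsymbol{(g_o)}^q)_+$ a.e.; the elementary estimate for the signed power $s\mapsto\boldsymbol{s}^q$ (Hölder continuity with exponent $\min\{q,1\}$ when $q\leq1$, and the mean value theorem together with $u\leq|u-g_o|+\|g_o\|_{L^\infty(\Omega)}$ on $\{u>k\}$ when $q>1$) bounds this a.e.\ by $c(q,\|g_o\|_{L^\infty(\Omega)})\big(|u-g_o|^{\min\{q,1\}}+|u-g_o|\big)$, so Hölder's inequality (using $|\Omega|<\infty$) and~\eqref{eq:obstacle-initial-q+1} yield $\bint_0^h\!\int_\Omega(\boldsymbol{w}^q-\boldsymbol{k}^q)_+\,\dx\dt\to0$ as $h\downarrow0$. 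Lemma~\ref{lem:maximum-principle} on $\Omega_{T'}$ then gives $u=w\leq k$ a.e.\ in $\Omega_{T'}$, hence $u\leq k$ a.e.\ in $\Omega_T$ by arbitrariness of $T'$; combined with the lower bound, $\|u\|_{L^\infty(\Omega_T)}\leq k$.

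The core of the argument is short; the work lies in the verifications. The step I expect to require the most care is confirming the hypotheses of Lemma~\ref{lem:maximum-principle} for $w$ — in particular the $L^q$-integrability of $w$ near the final time (resolved by passing to $\Omega_{T'}$) and the initial-trace condition~\eqref{eq:sub-initial-max} (resolved by comparing $(\u^q-\boldsymbol{k}^q)_+$ with a power of $|u-g_o|$ and invoking~\eqref{eq:obstacle-initial-q+1}); the globalization of Lemma~\ref{lem:weak_subsol} from single cylinders to $\Omega_T$ is routine.
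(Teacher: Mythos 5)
Your proposal is correct and follows essentially the same route as the paper: bound $u$ from below by the obstacle, truncate at a level $k$ dominating the boundary data (and $\psi$), invoke Lemma~\ref{lem:weak_subsol}~\eqref{lem:weak_subsol_2} to make $\max\{u,k\}$ an obstacle-free subsolution, verify the $H^1_0$-condition via $(u-k)_+\leq(u-g)_+$ and the initial condition~\eqref{eq:sub-initial-max} via~\eqref{eq:obstacle-initial-q+1}, and conclude with Lemma~\ref{lem:maximum-principle}. The only blemish is the stated pointwise bound $c\big(|u-g_o|^{\min\{q,1\}}+|u-g_o|\big)$ in the case $q>1$, where the mean value theorem argument you describe actually yields $c\big(|u-g_o|^{q}+|u-g_o|\big)$ (i.e.\ the exponent should be $\max\{q,1\}$, as in the paper's estimate), which does not affect the conclusion since H\"older's inequality and~\eqref{eq:obstacle-initial-q+1} handle that term just as well.
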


\begin{proof}
Observe that $u$ is bounded from below, since $u \geq \psi$ and $\psi \in C(\overline{\Omega_T})$. Let $k := \max \{ \sup_{\Omega_T} g,\sup_{\Omega} g_o\}$. Lemma~\ref{lem:weak_subsol} implies that $u_k := \max\{u,k\}$ is a weak subsolution to~\eqref{eq:pde}$_1$ in $\Omega_T$, since $k \geq \sup_{\Omega_T} \psi$. Furthermore, since $(u_k-k)_+(\cdot,t) = (u-k)_+(\cdot,t) \leq (u-g)_+ (\cdot,t) \in H^1_0(\Omega)$ for a.e. $t \in (0,T)$, it follows that $(u_k - k)_+ (\cdot,t) \in H^1_0(\Omega)$ for a.e. $t \in (0,T)$. Furthermore, $(\u_k^q-\boldsymbol{k}^q)_+ = (\u^q-\boldsymbol{k}^q)_+ \leq (u-k)_+^q \leq (u-g_o)_+^q$ if $q <1$, such that the condition~\eqref{eq:sub-initial-max} is satisfied for $u_k$ by H\"older's inequality and~\eqref{eq:obstacle-initial-q+1}. If $q > 1$ we have 
\begin{align*}
(\u_k^q-\boldsymbol{k}^q)_+ \leq (\u^q-\boldsymbol{g_o}^q)_+ &\leq \left( |u| + |g_o| \right)^{q-1} (u- g_o)_+ \\
&\leq \left( |u - g_o| + 2|g_o| \right)^{q-1} |u- g_o| \\
&\leq c(q) \left( |u - g_o|^q + |g_o|^{q-1} |u-g_o| \right), 
\end{align*}
such that~\eqref{eq:sub-initial-max} holds by using H\"older's inequality and~\eqref{eq:obstacle-initial-q+1}. Now Lemma~\ref{lem:maximum-principle} implies that $u_k \leq k$ a.e.~in $\Omega_T$. Thus, $u \leq u_k = k = \max \{ \sup_{\Omega_T} g, \sup_{\Omega} g_o\}$ a.e.~in $\Omega_T$, completing the proof.
\end{proof}

\section{Auxiliary results} \label{sec:aux}

In this section, we collect further tools that we will need in the proof of Theorem~\ref{thm:main_theorem}.
For $w,k \in \R$ we define 
\begin{align*}
\mathfrak{g}_\pm(w,k) := \pm q \int_k^w  |s|^{q-1} (s-k)_\pm \, \d s.
\end{align*}

The following estimates follow from the definition above, see e.g.~\cite[Lemma 2.2]{BDL}.
\begin{lemma}\label{lem:calg}
There exists a constant $c = c(q) > 0$ such that for all $w,k \in \R$ and $q > 0$, the inequality
$$
\tfrac{1}{c} \left( |w| + |k| \right)^{q-1} (w-k)^2_\pm \leq \mathfrak g_\pm(w,k) \leq c \left( |w| + |k| \right)^{q-1} (w-k)^2_\pm
$$
holds true.
\end{lemma}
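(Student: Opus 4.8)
The plan is to establish the estimate for $\mathfrak g_+$ and then deduce the one for $\mathfrak g_-$ by the reflection $s\mapsto-s$. First I would note that $\mathfrak g_+(w,k)=q\int_k^w|s|^{q-1}(s-k)_+\,\d s$ vanishes whenever $w\le k$ (the factor $(s-k)_+$ is $0$ on the interval joining $k$ and $w$), and so does $(|w|+|k|)^{q-1}(w-k)_+^2$; hence it suffices to treat $w>k$, where $(s-k)_+=s-k\ge0$ on $[k,w]$. Setting $M:=|w|+|k|>0$, the triangle inequality gives $0<w-k\le M$ and $|\,|w|-|k|\,|\le w-k$, so $\tfrac M2\le\max\{|w|,|k|\}\le M$ and $[k,w]\subset[-M,M]$. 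In what follows $\sim$ and $\gtrsim$ hide multiplicative constants depending only on $q$.

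\emph{Case 1: $w-k<\tfrac M4$.} Then $\min\{|w|,|k|\}\ge\max\{|w|,|k|\}-(w-k)\ge\tfrac M4$, and since $w-k<M$ the numbers $k$ and $w$ must have the same sign, so $|s|\in[\tfrac M4,M]$ for every $s\in[k,w]$ and therefore $|s|^{q-1}\sim M^{q-1}$. Inserting this into $\mathfrak g_+(w,k)=q\int_k^w|s|^{q-1}(s-k)\,\d s$ and using $\int_k^w(s-k)\,\d s=\tfrac12(w-k)^2$ gives $\mathfrak g_+(w,k)\sim M^{q-1}(w-k)^2$, which is the assertion since $M=|w|+|k|$.

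\emph{Case 2: $w-k\ge\tfrac M4$.} Here $w-k\sim M$, so it is enough to show $\mathfrak g_+(w,k)\sim M^{q+1}$. For the upper bound, $(s-k)\le w-k\le M$ and $\int_k^w|s|^{q-1}\,\d s\le\int_{-M}^M|s|^{q-1}\,\d s=\tfrac2q M^q$ give $\mathfrak g_+(w,k)\le 2M^{q+1}$. For the lower bound, at least one of $|w|,|k|$ is $\ge\tfrac M2$. If $|w|\ge\tfrac M2$, restrict the integral to $I=[\,w-\tfrac M8,\,w\,]\subset[k,w]$, on which $|s|\in[\tfrac{3M}8,\tfrac{9M}8]$ and $s-k\ge(w-k)-\tfrac M8\ge\tfrac M8$, so $\mathfrak g_+(w,k)\ge q\int_I|s|^{q-1}(s-k)\,\d s\gtrsim M^{q-1}\cdot\tfrac M8\cdot\tfrac M8$. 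If instead $|k|\ge\tfrac M2$, restrict to $I=[\,k,\,k+\tfrac M8\,]\subset[k,w]$, on which again $|s|\in[\tfrac{3M}8,\tfrac{9M}8]$ while $\int_I(s-k)\,\d s=\tfrac{M^2}{128}$, with the same conclusion. In either sub-case $\mathfrak g_+(w,k)\gtrsim M^{q+1}$, completing the $\mathfrak g_+$ estimate.

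For $\mathfrak g_-$ I would substitute $s\mapsto-s$ in the defining integral to obtain the identity $\mathfrak g_-(w,k)=\mathfrak g_+(-w,-k)$, together with $(|-w|+|-k|)^{q-1}\big((-w)-(-k)\big)_+^2=(|w|+|k|)^{q-1}(w-k)_-^2$; applying the bound just proved with $(w,k)$ replaced by $(-w,-k)$ then yields the estimate for $\mathfrak g_-$ with the same constant. The only point that needs a little care is the bookkeeping in Case 2 — checking that the short sub-intervals used for the lower bound genuinely lie in $[k,w]$ (this is where $w-k\ge\tfrac M4$ is used) and that $|s|^{q-1}$ stays comparable to $M^{q-1}$ on them for every $q>0$, the comparison constant degenerating as $q\to0$ or $q\to\infty$, which is exactly why $c$ is allowed to depend on $q$; everything else is elementary one-variable calculus.
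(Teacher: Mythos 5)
Your proof is correct. Note that the paper does not prove this lemma at all: it simply records it as a consequence of the definition and cites \cite[Lemma 2.2]{BDL}, so there is no in-paper argument to compare against; your contribution is a self-contained elementary verification. Your structure (treat $\mathfrak g_+$ only, reduce to $w>k$, split into the regime $w-k<\tfrac14(|w|+|k|)$ where $[k,w]$ stays away from the origin and $|s|^{q-1}\sim(|w|+|k|)^{q-1}$, and the regime $w-k\ge\tfrac14(|w|+|k|)$ where one shows $\mathfrak g_+\sim(|w|+|k|)^{q+1}$, then recover $\mathfrak g_-$ via $\mathfrak g_-(w,k)=\mathfrak g_+(-w,-k)$) is exactly the kind of case analysis the cited reference performs, and all the steps check out: in Case 1 the one-line justification worth making explicit is that $k\le 0\le w$ would force $w-k=|w|+|k|=M$, contradicting $w-k<\tfrac M4$, which is why $0\notin[k,w]$ and $|s|\in[\tfrac M4,M]$ there; in Case 2 the sub-intervals of length $\tfrac M8$ do lie in $[k,w]$ because $w-k\ge\tfrac M4$, they avoid the origin since $|s|\ge\tfrac{3M}8$, and the upper bound $\int_{-M}^M|s|^{q-1}\,\d s=\tfrac 2qM^q$ is finite for every $q>0$; the reflection identity and the equality $(w-k)_-=(k-w)_+$ correctly transfer the estimate to $\mathfrak g_-$ with the same constant. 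The dependence of the constant on $q$ alone is tracked correctly throughout.
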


\subsection{Tools for local sub(super)solutions} \label{subsec:local-case}

In this section, assume that $u$ is a local weak sub(super)solution to \eqref{eq:pde}$_1$ in some cylinder
$$
	\mathcal{Q} = Q_{R,S}(\hat{x}_o,\hat{t}_o) \subset \R^{n+1}.
$$
We start with the standard energy estimate. The following statement is retrieved as a special case of \cite[Proposition 2.1]{part2} by setting $p=2$. For a detailed proof see also \cite[Proposition 3.1]{BDL}.
\begin{lemma} \label{l.caccioppoli}
Let $z_o = (x_o,t_o) \in \mathcal{Q}$, $Q_{\rho,s}(z_o) \Subset \mathcal{Q}$ and $k \in \R$ be an arbitrary level.
Further, assume that $\varphi$ is a non-negative, piecewise smooth cutoff function vanishing on $\partial B_\rho(x_o) \times (t_o-s,t_o)$.
\begin{enumerate}
\item If $u$ is a local weak subsolution in $\mathcal{Q}$ to \eqref{eq:pde}$_1$ according to Definition~\ref{def:localsol_pme}, then there exists $c = c(C_o,C_1) > 0$ such that
\begin{align*}
	\max \bigg\{
	&\esssup_{t_o -s < t < t_o} \int_{B_\rho(x_o) \times \{t\}} \varphi^2 \mathfrak{g}_{+}(u,k) \, \d x,
	\tfrac{C_o}{2} \iint_{Q_{\rho,s}(z_o)} \varphi^2 |\nabla (u-k)_{+}|^2 \, \d x \d t
	\bigg\} \\
	&\leq
	c \iint_{Q_{\rho,s}(z_o)} [(u-k)_{+}^2|\nabla \varphi|^2 + \mathfrak{g}_{+} (u,k) |\partial_t \varphi^2|] \, \d x \d t \\
	&\phantom{=} + \int_{B_\rho(x_o) \times \{t_o -s\}} \varphi^2 \mathfrak{g}_{+}(u,k)\, \d x.
\end{align*}
\item If $u$ is a local weak supersolution in $\mathcal{Q}$ to \eqref{eq:pde}$_1$ according to Definition~\ref{def:localsol_pme}, then there exists $c = c(C_o,C_1) > 0$ such that
\begin{align*}
	\max \bigg\{
	&\esssup_{t_o -s < t < t_o} \int_{B_\rho (x_o) \times \{t\}} \varphi^2 \mathfrak{g}_{-}(u,k)\, \d x,
	\tfrac{C_o}{2} \iint_{Q_{\rho,s}(z_o)} \varphi^2 |\nabla (u-k)_{-}|^2 \, \d x \d t
	\bigg\} \\
	&\leq
	c \iint_{Q_{\rho,s}(z_o)} [(u-k)_{-}^2|\nabla \varphi|^2 + \mathfrak{g}_{-} (u,k) |\partial_t \varphi^2|] \, \d x \d t \\
	&\phantom{=} + \int_{B_\rho(x_o) \times \{t_o -s\}} \varphi^2 \mathfrak{g}_{-}(u,k) \, \d x.
\end{align*}
\end{enumerate}
\end{lemma}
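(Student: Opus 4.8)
The plan is to derive the Caccioppoli-type energy estimate by testing the (mollified) weak formulation of \eqref{eq:localsol_pme} with an appropriate test function built from the truncation $(u-k)_+$ and the cutoff $\varphi$. Since $u$ only has a weak time derivative in a very weak sense, the first step is to regularize in time: replace $\u^q$ by its time mollification $\mollifytime{\u^q}{h}$ (with zero initial values, as in \eqref{e.time_moll_g}), so that $\partial_t \mollifytime{\u^q}{h} = \tfrac1h(\u^q - \mollifytime{\u^q}{h})$ is an honest $L^2$ function, and likewise mollify the vector field term. This yields a mollified inequality valid on any subinterval, which one may test pointwise in time.

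Next I would choose the test function $\varphi^2 \Phi'(\mollifytime{\u^q}{h})$, where $\Phi$ is (essentially) the primitive producing $\mathfrak{g}_+(u,k)$; concretely the relevant quantity is $\partial_t \mollifytime{\u^q}{h}\,\varphi^2\,\tfrac{d}{ds}\big[\text{truncation of }s^{1/q}\big]$, which by the chain rule and an integration by parts in time produces the term $\esssup_t \int \varphi^2 \mathfrak{g}_+(\mollifytime{\u^q}{h}^{1/q},k)$ up to an error involving $|\partial_t\varphi^2|$ times $\mathfrak{g}_+$. For the diffusion term, the structure condition \eqref{eq:structure}$_1$ gives the good term $C_o\varphi^2|\nabla(u-k)_+|^2$, while \eqref{eq:structure}$_2$ together with Young's inequality absorbs the cross term $\varphi|\nabla\varphi|\,|\nabla(u-k)_+|\,(u-k)_+$ into a small multiple of the good term plus $c(u-k)_+^2|\nabla\varphi|^2$. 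Because $\varphi$ vanishes on the lateral boundary $\partial B_\rho(x_o)\times(t_o-s,t_o)$ but not necessarily at the initial time slice $t_o-s$, the integration by parts leaves a boundary contribution $\int_{B_\rho\times\{t_o-s\}}\varphi^2\mathfrak{g}_+$, which is exactly the last term in the claimed estimate.

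Finally, one passes to the limit $h\downarrow 0$: by Lemma~\ref{lem:time_mollification}, $\mollifytime{\u^q}{h}\to\u^q$ and $\mollifytime{\mathbf{A}}{h}\to\mathbf{A}$ in the appropriate $L^r$ spaces, and one uses weak lower semicontinuity on the left-hand side (for the $\esssup$ term, first fix the time level and use a.e.\ convergence along a subsequence, or a standard Steklov-average argument) and strong/weak convergence on the right. The supersolution case (2) is identical with $(u-k)_+$ replaced by $(u-k)_-$ and $\mathfrak{g}_+$ by $\mathfrak{g}_-$, using that $s\mapsto\boldsymbol{s}^q$ is still monotone. The main obstacle is the careful handling of the time term through the mollification and the limit: ensuring the monotonicity argument produces the $\mathfrak{g}_\pm$ terms with the correct constants, that the initial-slice term survives with the right sign, and that one may legitimately pass to the limit in the $\esssup$-in-time quantity — but since this is cited as a special case $p=2$ of \cite[Proposition 2.1]{part2} (see also \cite[Proposition 3.1]{BDL}), I would mostly refer to those arguments and just indicate the few places where the signed-power nonlinearity and the presence of the initial-slice term require attention.
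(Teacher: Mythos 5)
Your proposal is correct and follows essentially the same route as the paper: the paper itself only cites \cite[Proposition 2.1]{part2} (with $p=2$) and \cite[Proposition 3.1]{BDL}, whose proofs are exactly the argument you sketch — time-mollification of $\u^q$, testing with $\varphi^2(u-k)_\pm$ (equivalently your $\Phi'$-formulation), the structure conditions plus Young's inequality for the diffusion term, the initial-slice contribution from the fact that $\varphi$ vanishes only laterally, and passage to the limit $h\downarrow 0$ with lower semicontinuity for the $\esssup$ term. Deferring the routine details to those references, as you do, is precisely what the paper does.
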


Throughout Section~\ref{subsec:local-case}, we will use parameters $\muminus, \muplus \in \R$ and $\bomega > 0$ satisfying
$$
\muminus \leq \inf_{\mathcal{Q}} u,\quad \muplus \geq \sup_{\mathcal{Q}} u \quad \text{and}\quad \bomega \geq \muplus - \muminus. 
$$

\subsubsection{Tools for the case near zero}
First we state a shrinking lemma in the degenerate case. The proof is an adaptation of~\cite[Lemma 4.2]{BDL} to the porous medium setting.

\begin{lemma}\label{lem:shrinking-deg}
Let $0< q < 1$, $j_* \in \N$ be an arbitrary positive integer and $\eps \in (0,1)$. Denote $\theta = (2^{-j_*}\eps \bomega)^{q-1}$ and suppose that $Q_{2\rho,\theta \rho^2}(x_o, t_o) \Subset \mathcal{Q}$. Let $u$ be a locally bounded local weak sub(super)solution to \eqref{eq:pde}$_1$ in $\mathcal{Q}$. Suppose that for some $\alpha \in(0,1)$
$$
\left|\left\{\pm(\mupm - u(\cdot,t)) \geq \varepsilon \bomega\right\} \cap B_\rho(x_o) \right| \geq  \alpha |B_\rho(x_o)|
$$
holds for all $t \in (t_o - \theta \rho^2,t_o]$. 
Then there exists $c = c(C_o,C_1,q,n,\alpha) >0$ such that 
$$
\left|\left\{\pm(\mupm - u) \leq \frac{\varepsilon \bomega}{2^{j_*}}\right\} \cap Q_{\rho,\theta\varrho^2}(x_o,t_o) \right| \leq \frac{c }{\sqrt{ j_*}}|Q_{\rho,\theta\varrho^2}(x_o,t_o)|.
$$
\end{lemma}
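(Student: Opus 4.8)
The plan is to adapt the De Giorgi--type shrinking lemma from \cite[Lemma 4.2]{BDL} to the porous medium setting, working with the signed power $\u^q$ rather than $u$ itself. I will treat only the supersolution case with the sign $+$, i.e.\ I estimate the measure of the set $\{\muplus - u \le 2^{-j_*}\eps\bomega\}$ inside $Q_{\rho,\theta\rho^2}(x_o,t_o)$; the subsolution case is symmetric after reflecting $u \mapsto -u$. First I would introduce the decreasing sequence of levels $k_j := \muplus - 2^{-j}\eps\bomega$ for $j = 0, 1, \dots, j_*$, so that the levels $k_j$ all lie in the slab where $u$ is close to $\muplus$; note that $\muplus - u \ge 0$ on $\mathcal Q$, and that $k_{j+1} - k_j = 2^{-(j+1)}\eps\bomega$. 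The parabolic cylinder is intrinsically scaled with $\theta = (2^{-j_*}\eps\bomega)^{q-1}$, which in the singular-looking exponent (here $0<q<1$, so $q-1<0$) makes $\theta$ large when $2^{-j_*}\eps\bomega$ is small; this is the scaling under which the energy estimate of Lemma~\ref{l.caccioppoli} has the right homogeneity.

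The key steps, in order, are as follows. (i) Apply the Caccioppoli estimate Lemma~\ref{l.caccioppoli}(2) to $w := -u$ at level $-k_j$ (equivalently to $(u - k_j)$ from above, using the corresponding $\mathfrak g_-$ with the roles adjusted), on the cylinder $Q_{2\rho,\theta\rho^2}(x_o,t_o)$ with a cutoff $\varphi$ that equals $1$ on $Q_{\rho,\theta\rho^2}$, vanishes on the lateral boundary of $B_{2\rho}$, and has $|\nabla\varphi|\lesssim 1/\rho$, $|\partial_t\varphi^2|\lesssim 1/(\theta\rho^2)$. Using Lemma~\ref{lem:calg} to compare $\mathfrak g_\pm(u,k_j)$ with $(|u|+|k_j|)^{q-1}(u-k_j)_\mp^2$ and the crude bound $(|u|+|k_j|)^{q-1}\asymp (\eps\bomega)^{q-1}$ on the relevant set (since $q-1<0$ and all quantities are comparable to multiples of $\eps\bomega$ there, up to constants depending on $\|u\|_\infty/(\eps\bomega)$ — this is where care is needed), the energy estimate reduces to
\begin{align*}
\iint_{Q_{\rho,\theta\rho^2}} |\nabla (u - k_j)_+^{\mathrm{from\ above}}|^2 \,\dx\dt \le \frac{c}{\rho^2}\,(2^{-j}\eps\bomega)^2\,|Q_{\rho,\theta\rho^2}|,
\end{align*}
after the initial term is handled using the hypothesis that $|\{\muplus - u(\cdot,t)\ge\eps\bomega\}\cap B_\rho|\ge\alpha|B_\rho|$ holds for \emph{all} $t$ in the time interval, which lets one either drop the initial slice or absorb it into the measure-density information. (ii) Apply the De Giorgi isoperimetric inequality (the lemma comparing, on each time slice, the measure of $\{k_j < u < k_{j+1}\}$ with the product of the gradient integral and the measure of $\{u \le k_j\}^c$ — here in the "from above" orientation) slice-wise and integrate in $t$; using the density hypothesis to bound the $\{u \le k_j\}$-type set from below by $\alpha|B_\rho|$ uniformly in time, one gets
\begin{align*}
(k_{j+1}-k_j)\,|A_{j+1}| \le \frac{c\rho^{n+1}}{\alpha|B_\rho|}\left(\iint |\nabla(\cdot)_+|^2\right)^{1/2} |A_j\setminus A_{j+1}|^{1/2},
\end{align*}
where $A_j := \{\muplus - u \le 2^{-j}\eps\bomega\}\cap Q_{\rho,\theta\rho^2}$. (iii) Insert the energy bound from (i); the factors $2^{-j}\eps\bomega$ and $(k_{j+1}-k_j) = 2^{-(j+1)}\eps\bomega$ cancel, leaving $|A_{j+1}|^2 \le c\,|Q_{\rho,\theta\rho^2}|\,(|A_j| - |A_{j+1}|)$. (iv) Since $(|A_j|)_{j}$ is decreasing and bounded by $|Q_{\rho,\theta\rho^2}|$, sum this inequality over $j = 0,\dots,j_*-1$: the right-hand side telescopes to at most $c\,|Q_{\rho,\theta\rho^2}|^2$, while the left-hand side is at least $j_*\,|A_{j_*}|^2$ by monotonicity. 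Hence $|A_{j_*}| \le c\,j_*^{-1/2}\,|Q_{\rho,\theta\rho^2}|$, which is the claim.

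The main obstacle I anticipate is step (i): making the passage from $\mathfrak g_\pm(u,k_j)$ (which involves the genuinely nonlinear weight $(|u|+|k_j|)^{q-1}$) to a clean $(\eps\bomega)^{q-1}$-weighted $L^2$ quantity, so that the intrinsic time scaling $\theta = (2^{-j_*}\eps\bomega)^{q-1}$ exactly balances the Caccioppoli right-hand side with the correct power of $2^{-j}\eps\bomega$. Because $0<q<1$ the weight is singular at $u = 0$, so one must use that on $A_j \setminus$ (a null set) the quantity $\muplus - u$ is of order $\eps\bomega$ while $|u|$ itself could be small only if $\muplus$ is close to $\eps\bomega$; the constant produced will therefore depend on $\|u\|_\infty$ (consistently with the statement, which allows dependence on $q, n, C_o, C_1, \alpha$ only — so in fact one must be more careful and use Lemma~\ref{lem:calg} to keep the $\mathfrak g$'s intact through the De Giorgi iteration, only invoking the scaling $\theta$ at the level of homogeneity of the whole inequality rather than pointwise bounds on the weight). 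The secondary technical point is the correct slice-wise form and orientation of the isoperimetric inequality for a function close to its supremum, and ensuring the measure-density hypothesis is invoked on every time slice (which is exactly why it is stated for all $t$ in the interval, not just a.e.\ or at the initial time).
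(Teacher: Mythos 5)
Your overall architecture (Caccioppoli estimate for the truncations at the dyadic levels $k_j=\muplus-2^{-j}\eps\bomega$, slice-wise De Giorgi isoperimetric inequality with the measure-density hypothesis in the denominator, cancellation of the factors $2^{-j}\eps\bomega$, and summation over $j$ to produce the $j_*^{-1/2}$ decay) is exactly the standard adaptation of \cite[Lemma 4.2]{BDL} that the paper invokes, and steps (ii)--(iv) are fine. The genuine gap is in your step (i), and you half-acknowledge it yourself: the claimed comparability $(|u|+|k_j|)^{q-1}\asymp(\eps\bomega)^{q-1}$ is simply false in general (nothing prevents $|u|+|k_j|\gg\eps\bomega$), and patching it with constants depending on $\|u\|_\infty/(\eps\bomega)$ is not allowed, since the constant in the statement may only depend on $C_o,C_1,q,n,\alpha$. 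Your fallback (``keep the $\mathfrak g$'s intact through the iteration'') is left unresolved, but the resolution is short and you should state it: in the degenerate range no lower bound on the weight is ever needed, because the gradient term on the left of Lemma~\ref{l.caccioppoli} carries no weight; one only needs the one-sided bound
\begin{equation*}
\mathfrak g_+(u,k_j)\le c(q)\,(|u|+|k_j|)^{q-1}(u-k_j)_+^2\le c(q)\,(u-k_j)_+^{q+1}\le c(q)\,(2^{-j}\eps\bomega)^{q+1}\le c(q)\,\theta\,(2^{-j}\eps\bomega)^2,
\end{equation*}
where the second inequality uses $|u|+|k_j|\ge (u-k_j)_+$ and $q-1<0$, and the last one uses $2^{-j}\eps\bomega\ge 2^{-j_*}\eps\bomega$ together with $\theta=(2^{-j_*}\eps\bomega)^{q-1}$. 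This removes any $\|u\|_\infty$ dependence and makes every occurrence of $\mathfrak g_+$ of the same order $\rho^{-2}(2^{-j}\eps\bomega)^2|Q_{2\rho,\theta\rho^2}|$ as the $|\nabla\varphi|^2$ term.

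Two further points in step (i) need correcting. First, your cutoff is internally inconsistent: since $Q_{\rho,\theta\rho^2}$ and $Q_{2\rho,\theta\rho^2}$ have the same time length, a cutoff equal to $1$ on all of $Q_{\rho,\theta\rho^2}$ cannot also vanish at the initial time of $Q_{2\rho,\theta\rho^2}$, so there is no useful bound $|\partial_t\varphi^2|\lesssim(\theta\rho^2)^{-1}$; take $\varphi=\varphi(x)$ time-independent and keep the initial-time term, which is then estimated by the same bound on $\mathfrak g_+$ as above (it is \emph{not} handled via the measure-density hypothesis --- that hypothesis is used only in the isoperimetric step, and is needed for all $t$ precisely because the slice-wise denominator $|\{u(\cdot,t)<k_j\}\cap B_\rho|\ge\alpha|B_\rho|$ must hold at every time). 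Second, your sign/solution pairing is swapped: for the quantity $\muplus-u$ one needs the energy estimate for $(u-k_j)_+$, i.e.\ Lemma~\ref{l.caccioppoli}(1) for \emph{sub}solutions (equivalently, Lemma~\ref{l.caccioppoli}(2) applied to $-u$, which is a supersolution of the reflected equation only when $u$ is a subsolution); as written, ``supersolution with the sign $+$'' is not a case in which the required estimate is available. With these repairs the proof goes through as you outline.
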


Then we state a De Giorgi type lemma in the local, obstacle free case. See~\cite[Lemma 3.1]{part2} in connection with \cite[Lemma 2.2]{Liao-lsc}.

\begin{lemma} \label{lem:de-giorgi-deg}
Let $q > 0$ and $u$ be a locally bounded local weak sub(super)solution to \eqref{eq:pde}$_1$ in $\mathcal{Q}$. Let $\theta = \left(\xi \bomega \right)^{q-1}$ for some $\xi \in (0,1)$ and $Q_{\rho,\theta\varrho^2}(x_o,t_o) \Subset \mathcal{Q}$. There exists a constant $\nu \in (0,1)$ depending only on $C_o,C_1,n$ and $q$ such that if
$$
\left| \left\{ \pm (\mupm - u) \leq \xi \bomega \right\}\cap Q_{\rho,\theta\varrho^2}(x_o,t_o) \right| \leq \nu \left|Q_{\rho,\theta\varrho^2}(x_o,t_o) \right|,
$$
then we have that
$$
\left| \mupm \right| > 8 \xi \bomega,
$$
or
$$
\pm (\mupm - u) \geq \tfrac12 \xi \bomega \quad \text{ a.e. in } Q_{\frac{\varrho}{2},\theta\left(\frac{\varrho}{2}\right)^2}(x_o,t_o).
$$
\end{lemma}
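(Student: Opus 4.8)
The plan is to reduce the statement to a standard parabolic De~Giorgi iteration for the obstacle free equation~\eqref{eq:pde}$_1$, the delicate point being to check that the intrinsic waiting time $\theta=(\xi\bomega)^{q-1}$ makes every estimate scale invariant. I would first normalise: replacing $u$ by $-u$ turns a subsolution into a supersolution, exchanges $\muplus$ with $-\muminus$, and leaves~\eqref{eq:structure} intact, so it suffices to treat the case in which $u$ is a supersolution and the lower sign is in force; a translation reduces to $(x_o,t_o)=(0,0)$. If $|\muminus|>8\xi\bomega$ the first alternative holds and there is nothing to prove, so from now on I assume $|\muminus|\le8\xi\bomega$.

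For $j=0,1,2,\dots$ set
$$\rho_j:=\tfrac{\rho}{2}\bigl(1+2^{-j}\bigr),\qquad k_j:=\muminus+\tfrac{\xi\bomega}{2}\bigl(1+2^{-j}\bigr),$$
$Q_j:=Q_{\rho_j,\theta\rho_j^2}(0)$ and $A_j:=\{u<k_j\}\cap Q_j$, so that $\rho_j\downarrow\tfrac{\rho}{2}$ and $k_j\downarrow\muminus+\tfrac12\xi\bomega$, and let $\varphi_j$ be a piecewise smooth cutoff with $\varphi_j\equiv1$ on $Q_{j+1}$, vanishing on the parabolic boundary of $Q_j$, with $|\nabla\varphi_j|\le c\,2^j/\rho$ and $|\partial_t\varphi_j^2|\le c\,4^j/(\theta\rho^2)$. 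The key computation is Lemma~\ref{l.caccioppoli}\,(2) on $Q_j$ at level $k_j$ with cutoff $\varphi_j$: the initial term drops since $\varphi_j$ vanishes at the bottom of $Q_j$, and on $\{u<k_j\}$, outside which the relevant integrands $\mathfrak{g}_-(u,k_j)$ and $(u-k_j)_-$ vanish, one has $\muminus\le u<k_j\le\muminus+\xi\bomega$, hence $|u|+|k_j|\le c\,\xi\bomega$ and $0\le(u-k_j)_-\le\xi\bomega$. Combining this with Lemma~\ref{lem:calg} (distributing powers so as to avoid needing a lower bound for $|u|+|k_j|$) gives $\mathfrak{g}_-(u,k_j)\le c\,\theta(\xi\bomega)^2$ there, so the right-hand side of the Caccioppoli inequality is $\le c\,4^j(\xi\bomega)^2\rho^{-2}\,|A_j|$, while its left-hand side controls both $\esssup_t\int_{B_{\rho_j}\times\{t\}}\varphi_j^2\,\mathfrak{g}_-(u,k_j)\,\dx$ and $\iint_{Q_j}\varphi_j^2|\nabla(u-k_j)_-|^2\,\dx\dt$ by that same quantity.

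These two bounds I would feed into the parabolic Sobolev inequality (see e.g.~\cite{DGV}) applied to $\varphi_j\,w$, with $w:=(u-k_j)_-$ when $0<q\le1$ and $w:=(u-k_j)_-^{(q+1)/2}$ when $q>1$. This choice is made so that in either regime $\esssup_t\int\varphi_j^2w^2\le c\,(\text{RHS})$ --- via the lower bound in Lemma~\ref{lem:calg} for $q\le1$, and via the elementary inequality $\mathfrak{g}_-(u,k_j)\ge c^{-1}(u-k_j)_-^{q+1}$ for $q>1$ --- while $\iint\varphi_j^2|\nabla w|^2\le c\,\theta\iint\varphi_j^2|\nabla(u-k_j)_-|^2$. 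Bounding $|A_{j+1}|$ by $\iint_{Q_j}(\varphi_j w)^{2(n+2)/n}$ with the help of $(u-k_j)_-\ge k_j-k_{j+1}=2^{-j-2}\xi\bomega$ on $A_{j+1}$, and normalising by $|Q_j|=c_n\theta\rho_j^{n+2}$, the powers of $\theta$ and $\xi\bomega$ cancel and one obtains a recursion
$$Y_{j+1}\le C\,b^{j}\,Y_j^{1+\frac{2}{n}},\qquad Y_j:=\frac{|A_j|}{|Q_j|},$$
with $C,b$ depending only on $C_o,C_1,n,q$. By the standard fast geometric convergence lemma there is $\nu=\nu(C_o,C_1,n,q)\in(0,1)$ such that $Y_0\le\nu$ forces $Y_j\to0$; since $\{-(\muminus-u)\le\xi\bomega\}=\{u\le\muminus+\xi\bomega\}$ contains $A_0$ up to a null set, the hypothesis gives exactly $Y_0\le\nu$, hence $|\{u<\muminus+\tfrac12\xi\bomega\}\cap Q_{\rho/2,\theta(\rho/2)^2}(0)|=0$, i.e.\ $-(\muminus-u)\ge\tfrac12\xi\bomega$ a.e.\ in $Q_{\rho/2,\theta(\rho/2)^2}(0)$; the subsolution case follows by applying this to $-u$.

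I expect the main obstacle to be keeping the iteration scale invariant. The alternative $|\mupm|\le8\xi\bomega$ is used precisely so that, on the sets where the energy terms live, $|u|+|k_j|\le c\,\xi\bomega$ and therefore Lemma~\ref{lem:calg} gives the needed comparison of $\mathfrak{g}_\pm(u,k_j)$ with $\theta(u-k_j)_\pm^2$; this comparison genuinely breaks down in the corner where $u$ and $k_j$ are both far smaller than $\xi\bomega$ (where $\mathfrak{g}_\pm\gg\theta(u-k_j)_\pm^2$ if $q<1$ and $\mathfrak{g}_\pm\ll\theta(u-k_j)_\pm^2$ if $q>1$), so one must retain only the one-sided bound that survives in each regime --- equivalently, carry $\mathfrak{g}_\pm$ rather than $(u-k_j)_\pm^2$ through the Sobolev step --- and then verify that the bookkeeping of powers of $\xi\bomega$ exactly neutralises the factor $\theta=(\xi\bomega)^{q-1}$, so that $C$, $b$ and ultimately $\nu$ depend only on $C_o,C_1,n,q$.
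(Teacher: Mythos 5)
Your argument is correct. Note that the paper does not prove Lemma~\ref{lem:de-giorgi-deg} at all: it simply cites \cite[Lemma 3.1]{part2} together with \cite[Lemma 2.2]{Liao-lsc}. Your proof is the standard De~Giorgi iteration that those references use, and it mirrors the proofs the paper does write out for the analogous away-from-zero and initial-boundary statements (Lemmas~\ref{l.de_giorgi2_scheven2} and~\ref{lem:de-giorgi-initial}); the only genuinely delicate points are exactly the ones you isolate, and you handle them correctly: the alternative $|\mupm|\le 8\xi\bomega$ gives $|u|+|k_j|\le c\,\xi\bomega$ on the truncation sets, so that for $0<q\le1$ Lemma~\ref{lem:calg} yields both $\mathfrak{g}_\pm(u,k_j)\le c\,(u-k_j)_\pm^{q+1}\le c\,\theta(\xi\bomega)^2$ and $\mathfrak{g}_\pm(u,k_j)\ge c^{-1}\theta(u-k_j)_\pm^2$, while for $q>1$ one keeps the elementary lower bound $\mathfrak{g}_\pm\ge c^{-1}(u-k_j)_\pm^{q+1}$ and works with $w=(u-k_j)_\pm^{(q+1)/2}$ in the parabolic Sobolev inequality. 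With these choices the powers of $\xi\bomega$ and of $\theta=(\xi\bomega)^{q-1}$ cancel in the recursion for $Y_j=|A_j|/|Q_j|$, so the resulting smallness threshold $\nu$ depends only on $C_o,C_1,n,q$, as claimed; the reduction to the supersolution case via $u\mapsto -u$ (with the reflected vector field, which satisfies the same structure conditions) is also fine.
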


In the singular case, we exploit the result on expansion of positivity in~\cite[Proposition 2.2]{Liao}.
Since it is stated and proved for a different formulation of \eqref{eq:pde}$_1$, which is equivalent in the case of locally bounded solutions, we have to add the assumption that $\max \{ |\muminus|,|\muplus|\} \leq 16 M$ in the following proposition. This is due to the fact that the expansion of positivity in~\cite[Proposition 2.2]{Liao} is shown for the quantity $\pm ((\mupm)^q - \u^q)$ in our notation, but we use the result for $\pm (\mupm - u)$.

\begin{proposition}
\label{prop:expansion-of-positivity-q}
Let $q > 1$ and assume that $u$ is a locally bounded local weak sub(super)solution
to \eqref{eq:pde}$_1$ according to Definition~\ref{def:localsol_pme} in a cylinder $\mathcal{Q} \supset B_{16 \rho}(y)\times (s, s + \delta M^{q-1} \rho^2]$ for some $M>0$ and $\delta \in(0,1)$ determined below.
Suppose that
$$
	\big| \big\{ \pm \big( \mupm - u(\cdot,s) \big) \geq M \big\}
	\cap B_\varrho(y) \big|
	\geq
	\alpha |B_\varrho(y)|
$$
for $\alpha \in (0,1)$, and that $\max\{|\muplus|,|\muminus|\} \leq 16 M$.
Then, there exist constants $\xi, \eta, \varepsilon, \delta \in (0,1)$ depending only on $C_o,C_1,n,q$ and $\alpha$ such that
$$
	\big| \mupm \big| > 8 \xi M
$$
or the inequality
$$
	\pm \big( \mupm - u(\cdot,t) \big) \geq \eta M
	\quad \text{a.e.~in } B_{2\varrho}(y)
$$
holds true for all times
$$
t \in \Big[ s + (1-\epsilon) \delta M^{q-1} \varrho^2,
		s + \delta M^{q-1} \varrho^2 \Big].
$$
\end{proposition}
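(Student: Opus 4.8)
The plan is to deduce Proposition~\ref{prop:expansion-of-positivity-q} from the expansion of positivity in \cite[Proposition~2.2]{Liao} by passing to the unknown $v:=\u^q$. Since $u$ is locally bounded and $b\mapsto\boldsymbol{b}^q$ is $C^1$ on $\R$ with derivative $q|b|^{q-1}$ and locally Lipschitz, $v=\u^q$ lies in the parabolic class naturally attached to Liao's equation and is a local weak sub(super)solution, in the sense used there, of the ``different formulation''
\[
  \partial_t v - \Div\mathbf{A}\bigl(x,t,v,\nabla\boldsymbol{v}^{1/q}\bigr) = 0 \qquad\text{in }\mathcal{Q},
\]
where $\mathbf{A}$ still obeys~\eqref{eq:structure} with its last argument read as $\nabla\boldsymbol{v}^{1/q}=\nabla u$; local boundedness of $u$ is exactly what makes this reformulation harmless. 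For $v$ the natural bounding levels are $\nu^\pm:=(\mupm)^q$, which satisfy $\nu^-\le\inf_\mathcal{Q}v$ and $\nu^+\ge\sup_\mathcal{Q}v$.

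First I would translate the density hypothesis. From $\boldsymbol{a}^q-\boldsymbol{b}^q = q\int_b^a|s|^{q-1}\,\d s$ one has, for all $a,b\in\R$ and some $c=c(q)\ge 1$ (an elementary inequality, cf.\ Lemma~\ref{lem:calg}),
\[
  \tfrac1c\,\bigl(|a|+|b|\bigr)^{q-1}|a-b| \;\le\; \bigl|\boldsymbol{a}^q-\boldsymbol{b}^q\bigr| \;\le\; q\,\bigl(|a|+|b|\bigr)^{q-1}|a-b| .
\]
Applying the left inequality with $a=\mupm$, $b=u(\cdot,s)$ on $\{\pm(\mupm-u(\cdot,s))\ge M\}\cap B_\varrho(y)$ and using $|\mupm|+|u|\ge|\mupm-u|$ together with $q-1\ge 0$ yields
\[
  \pm\bigl(\nu^\pm-\u^q(\cdot,s)\bigr) \;=\; \bigl|\nu^\pm-\u^q(\cdot,s)\bigr| \;\ge\; \tfrac1c\,|\mupm-u(\cdot,s)|^q \;\ge\; \tfrac1c\,M^q \;=:\; \widetilde M
\]
on that set. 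Hence the slice-wise density assumption of \cite[Proposition~2.2]{Liao} holds for $v$ at time $s$ on $B_\varrho(y)$ at level $\widetilde M$ with the same $\alpha$, while $\max\{|\nu^+|,|\nu^-|\} = \max\{|\muplus|,|\muminus|\}^q \le (16M)^q$. The intrinsic scaling also matches: for the fast diffusion exponent $m=1/q$ the cylinder length in \cite{Liao} at amplitude $\widetilde M$ is $\widetilde M^{(q-1)/q}\varrho^2 = c^{-(q-1)/q}M^{q-1}\varrho^2$, so Liao's $\delta_0\widetilde M^{(q-1)/q}\varrho^2$ becomes $\delta M^{q-1}\varrho^2$ with $\delta:=\delta_0 c^{-(q-1)/q}\in(0,1)$, and the required inclusion $\mathcal{Q}\supset B_{16\varrho}(y)\times(s,s+\delta M^{q-1}\varrho^2]$ is precisely our hypothesis. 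Thus \cite[Proposition~2.2]{Liao} applies to $v$.

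It remains to translate the conclusion back. \cite[Proposition~2.2]{Liao} gives, for all $t\in[s+(1-\epsilon)\delta M^{q-1}\varrho^2,\,s+\delta M^{q-1}\varrho^2]$ with $\epsilon=\epsilon_0\in(0,1)$ from \cite{Liao}, that either $|\nu^\pm|>8\xi_0\widetilde M$ or $\pm(\nu^\pm-v(\cdot,t))\ge\eta_0\widetilde M$ a.e.\ in $B_{2\varrho}(y)$, where $\xi_0,\eta_0,\epsilon_0,\delta_0\in(0,1)$ depend only on $C_o,C_1,n,q,\alpha$. In the first case $|\mupm|^q=|\nu^\pm|>8\xi_0 c^{-1}M^q$, hence $|\mupm|>8\xi M$ with $\xi:=\tfrac18(8\xi_0 c^{-1})^{1/q}\in(0,1)$. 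In the second case, the standing assumption $\muminus\le u\le\muplus$ on $\mathcal{Q}$ and $\max\{|\muminus|,|\muplus|\}\le 16M$ give $|u(\cdot,t)|,|\mupm|\le 16M$ on $B_{2\varrho}(y)$, so the right inequality above (with $a=\mupm$, $b=u(\cdot,t)$) together with the fact that $\boldsymbol{a}^q-\boldsymbol{b}^q$ has the same sign as $a-b$ gives
\[
  \pm\bigl(\mupm-u(\cdot,t)\bigr) \;=\; \bigl|\mupm-u(\cdot,t)\bigr| \;\ge\; \frac{\bigl|(\mupm)^q-\u^q(\cdot,t)\bigr|}{q\,(32M)^{q-1}} \;\ge\; \frac{\eta_0\widetilde M}{q\,(32M)^{q-1}} \;=\; \eta M
\]
a.e.\ in $B_{2\varrho}(y)$, where $\eta:=\eta_0 c^{-1}(q\,32^{q-1})^{-1}\in(0,1)$. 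Since $c=c(q)$ and $\xi_0,\eta_0,\epsilon_0,\delta_0$ depend only on $C_o,C_1,n,q,\alpha$, so do $\xi,\eta,\epsilon,\delta$, which is the assertion.

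The only genuinely delicate point is the first step, namely checking rigorously that $v=\u^q$ is an admissible local weak sub(super)solution for the formulation used in~\cite{Liao}: that the weak formulation~\eqref{eq:localsol_pme} for $u$ transforms correctly into the one used there, and that the transformed vector field still satisfies the structure conditions required in~\cite{Liao}. This is precisely where local boundedness of $u$ enters, keeping $v$ and $\nabla\boldsymbol{v}^{1/q}=\tfrac{1}{q}|v|^{1/q-1}\nabla v=\nabla u$ in the right Lebesgue and Sobolev classes and letting~\eqref{eq:structure} transfer with controlled constants. The rest is bookkeeping with the intrinsic scaling $\widetilde M=c(q)^{-1}M^q$ and the two elementary inequalities above.
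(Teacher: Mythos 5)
Your proposal is correct and takes essentially the same route as the paper: pass to $v=\u^q$, which is a sub(super)solution of the formulation used in \cite{Liao} with a suitably transformed vector field, translate the measure hypothesis at level $M$ into one for $v$ at level $\approx M^q$ via the elementary inequality relating $|\boldsymbol{a}^q-\boldsymbol{b}^q|$ and $|a-b|$, apply \cite[Proposition 2.2]{Liao} with the matching intrinsic time scaling, and convert the conclusion back using $\max\{|\muplus|,|\muminus|\}\leq 16M$. The only point you defer but the paper writes out is the explicit transformed vector field $\widetilde{\mathbf{A}}(x,t,v,\xi)=\mathbf{A}\bigl(x,t,v,\tfrac1q|v|^{\frac1q-1}\chi_{\R\setminus\{0\}}(v)\,\xi\bigr)$ together with its weighted structure conditions, which you correctly flag as the place where local boundedness of $u$ is used.
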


\begin{proof}
We only give the proof for a local weak subsolution, since the proof for supersolutions is analogous.
For $v := \u^q$ and $m = \frac{1}{q}$ we have that
$$
\partial_t v - \Div \mathbf{A}\big(x,t,v,\nabla \boldsymbol{v}^m \big) \leq  0.
$$
Since $u$ is bounded in $\Omega_T$, we find that $\nabla v = \frac{1}{m} |v|^{1-m} \nabla \boldsymbol{v}^m \in L^2(\Omega_T,\R^n)$.
Therefore, $\nabla v$ is the weak gradient of $v$.
Further, since $\nabla \boldsymbol{v}^m$ is zero a.e.~in the set $\{v=0\}$, we have that
$$
\nabla \boldsymbol{v}^m = m |v|^{m-1} \nabla v \chi_{\{v \neq 0\}},
$$
a.e.~in $\Omega_T$.
Now, we define the vector field $\widetilde{\mathbf{A}} \colon \Omega_T \times \R \times \R^n \to \R^n$ by
$$
\widetilde{\mathbf{A}}(x,t,v, \xi) = \mathbf{A} \big(x,t,v,m |v|^{m-1} \chi_{\R \setminus \{0\}}(v) \xi \big).
$$ 
Then $\widetilde{\mathbf{A}}$ satisfies the structure conditions
$$
	\left\{
	\begin{array}{l}
	|\widetilde{\mathbf{A}}(x,t,v, \xi)| \leq m C_1 |v|^{m-1} \chi_{\R \setminus \{0\}}(v) |\xi|, \\[5pt]
	\widetilde{\mathbf{A}}(x,t,v, \xi) \cdot \xi \ge m C_o |v|^{m-1} \chi_{\R \setminus \{0\}}(v) |\xi|^2
	\end{array}
	\right.
$$
for a.e. $(x,t) \in \Omega_T$ and every $(v,\xi) \in \R \times \R^n$.
In this setting, we are able to apply the result on expansion of positivity in the singular case in~\cite{Liao}.
Defining $\muplus_v := (\muplus)^\frac{1}{m} \geq \sup_{\mathcal{Q}} v$, we rewrite the assumption that
$$
\big| \big\{ \muplus - u(\cdot,s) \geq M \big\}
	\cap B_\varrho(y) \big| 
	\geq
	\alpha |B_\varrho(y)|
$$
as
$$
\big| \big\{ (\muplus_v)^m - \boldsymbol{v}^m(\cdot,s) \geq M \big\}
	\cap B_\varrho(y) \big| 
	\geq
	\alpha |B_\varrho(y)|.
$$
Since there exists a constant $c_1 = c_1(m) >0$ such that $c_1 |\boldsymbol{a}^m - \boldsymbol{b}^m| \leq |a-b|^m$ for any $a,b \in \R$, this implies that
$$
\big| \big\{ \muplus_v - v(\cdot,s) \geq (c_1 M)^\frac{1}{m} \big\}
	\cap B_\varrho(y) \big| 
	\geq
	\alpha |B_\varrho(y)|.
$$
Then, by using \cite[Proposition 2.2]{Liao} with $M$ replaced by $c_1M$, we infer that there exist constants $\tilde{\xi}, \varepsilon, \tilde{\delta}, \tilde{\eta} \in (0,1)$ depending only on $C_o,C_1,n,m$ and $\alpha$ such that
$$
	|\muplus_v| > \tilde{\xi} (c_1 M)^\frac{1}{m}
$$
or
$$
	\muplus_v - v(\cdot,t) \geq \tilde{\eta} (c_1 M)^\frac{1}{m} \quad \text{ a.e. in } B_{2 \rho}(y)
$$
for all $t \in \big[s+  (1-\eps)\tilde{\delta} (c_1 M)^{\frac{1}{m} -1} \rho^2, s+ \tilde{\delta} (c_1 M)^{\frac{1}{m} -1} \rho^2 \big]$.
At this point, we use the fact that $0<m<1$ and the assumption that $\max\{|\muplus_v|, |v|\} \leq \max\{|\muplus|, |\muminus| \}^\frac{1}{m} \leq (16M)^\frac{1}{m}$ to conclude that there exists a constant $c_2 = c_2(m) > 0$ such that
$$
	(\muplus_v)^m - \boldsymbol{v}^m
	\geq
	c_2 (|\muplus_v| + |v|)^{m-1} (\muplus_v -  v)
	\geq
	c_2 (32M)^{1-\frac{1}{m}} (\muplus_v - v).
$$
Combining the preceding inequalities, reverting to $u = \boldsymbol{v}^m$ and defining $\xi := \frac18 c_1 \tilde{\xi}^m$, $\eta := 2^{5\left(1-\frac{1}{m}\right)} c_1^\frac{1}{m} c_2 \tilde{\eta}$ and $\delta := c_1^{\frac{1}{m}-1} \tilde{\delta}$, this translates to the conclusion that
$$
	|\muplus| > 8 \xi M
$$
or
$$
	\muplus- u(\cdot,t) \geq \eta M \quad \text{ a.e. in } B_{2 \rho}(y)
$$
holds for every $t \in \big[s+  (1-\eps)\delta M^{q-1} \rho^2, s+ \delta M^{q-1} \rho^2\big]$.
\end{proof}

\begin{remark}
\label{rem:expansion}
Replacing $c_1 M$ by $\kappa c_1 M$ with $\kappa \in [\kappa_o,1]$ for some $\kappa_o \in (0,1)$ in the proof of Proposition~\ref{prop:expansion-of-positivity-q}, we obtain that under the hypotheses of Proposition \ref{prop:expansion-of-positivity-q} there holds $\big|\boldsymbol{\mu}^\pm\big| > 8\xi\kappa_o M$ or
$$
	\pm \big( \mupm - u(\cdot,t) \big) \geq \eta \kappa_o M
	\quad \text{a.e.~in } B_{2\varrho}(y)
$$
for all times
$$
t \in \Big[ s + \kappa_o^{q-1} (1-\epsilon) \delta M^{q-1} \varrho^2,
		s + \delta M^{q-1} \varrho^2 \Big].
$$
Thus, redefining $\xi$ as $\xi \kappa_o$ and $\eta$ as $\eta \kappa_o$, pointwise positivity can be claimed as close to $s$ as we need. 
\end{remark}

The next result is an analogue to~\cite[Lemma 3.3]{part2}.
The only difference in the proof is that we replace~\cite[Proposition 4.1, Chapter 4]{DGV}, i.e.~expansion of positivity for $p>2$, by the simpler result \cite[Proposition 2.1, Chapter 4]{DGV} for $p=2$.

\begin{lemma}
\label{lem:mu trapped}
Let $q>0$ and introduce the parameters $\Lambda, c > 0$ and $\alpha \in (0,1)$. Assume that $u$ is a locally bounded local weak sub(super)solution in $\mathcal{Q}$
to \eqref{eq:pde}$_1$.
Suppose that
$$
	c \bomega \leq \pm \mupm \leq \Lambda \bomega
$$
and for some $0 < a \leq \frac{1}{2}c$ there holds
$$
	\big| \big\{ \pm \big( \mupm - u(\cdot,s) \big) \geq a \bomega \big\}
	\cap B_\varrho \big|
	\geq
	\alpha |B_\varrho|.
$$
Then, there exist constants $\eta, b \in (0,1)$
depending only on $C_o,C_1,n,q$, $\Lambda, c, a$ and $\alpha$ such that
$$
	\pm \big( \mupm - u) \geq \eta \bomega
	\quad \text{a.e.~in }
	B_{2 \varrho} \times
	\Big( s + \tfrac12 b \bomega^{q-1} \varrho^2,
	s + b \bomega^{q-1} \varrho^2 \Big],
$$
provided that $B_{8 \rho} \times (s-b \bomega^{q-1} \varrho^2,s+ b \bomega^{q-1} \varrho^2 ) \Subset \mathcal{Q}$.
\end{lemma}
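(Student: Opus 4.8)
The plan is to follow the proof of \cite[Lemma 3.3]{part2}, the sole modification being that the expansion-of-positivity tool \cite[Proposition 4.1, Chapter 4]{DGV} for $p>2$ is replaced by its $p=2$ analogue \cite[Proposition 2.1, Chapter 4]{DGV}. I treat the subsolution case, where $\muplus\geq\sup_{\mathcal Q}u$ and the goal is to bound $\muplus-u$ from below; the supersolution case is obtained by reflecting $u\mapsto-u$, $\muplus\mapsto-\muminus$. The point of the hypothesis $c\bomega\leq\muplus\leq\Lambda\bomega$ is that, on the range of values relevant for the lower bound, \eqref{eq:pde}$_1$ is uniformly parabolic. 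Concretely, set $K:=\tfrac12 c\bomega$ and $\bar u:=\max\{u,\muplus-K\}$. Since truncating a local weak subsolution from below again yields a local weak subsolution in the obstacle-free setting, $\bar u$ is a local weak subsolution to \eqref{eq:pde}$_1$ in $\mathcal Q$, and from $c\bomega\leq\muplus\leq\Lambda\bomega$ we get $\tfrac12 c\bomega\leq\muplus-K\leq\bar u\leq\muplus\leq\Lambda\bomega$ throughout $\mathcal Q$. Hence $w:=\muplus-\bar u=\min\{\muplus-u,K\}$ satisfies $0\leq w\leq K$ and is a local weak supersolution of the reflected equation $\partial_t\Phi(w)-\Div\mathbf B(x,t,w,\nabla w)=0$, where $\Phi(w)=-(\muplus-w)^q$ has $\Phi'(w)=q(\muplus-w)^{q-1}$ comparable to $\bomega^{q-1}$ on $[0,K]$ with constants depending only on $q,c,\Lambda$, and $\mathbf B$ fulfils \eqref{eq:structure} with the same $C_o,C_1$.

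Next, after the intrinsic time scaling $\tau=\bomega^{1-q}(t-s)$, Lemma~\ref{lem:calg} together with $\Phi'\asymp\bomega^{q-1}$ on $[0,K]$ shows that the Caccioppoli estimate of Lemma~\ref{l.caccioppoli} for $w$ (and likewise the companion logarithmic estimate) becomes, after dividing by $\bomega^{q-1}$, exactly the energy estimate of a nonnegative weak supersolution of a uniformly parabolic equation whose structural constants depend only on $C_o,C_1,q,c,\Lambda$. From here the argument of \cite[Lemma 3.3]{part2} applies verbatim with \cite[Proposition 2.1, Chapter 4]{DGV} in place of \cite[Proposition 4.1, Chapter 4]{DGV}. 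Since $a\leq\tfrac12 c$ we have $a\bomega\leq K$, so the hypothesis reads
\[
\big|\{w(\cdot,s)\geq a\bomega\}\cap B_\varrho\big|=\big|\{\muplus-u(\cdot,s)\geq a\bomega\}\cap B_\varrho\big|\geq\alpha|B_\varrho|,
\]
and the expansion of positivity, applied in the variable $\tau$ on the cylinder provided by $B_{8\varrho}\times(s-b\bomega^{q-1}\varrho^2,s+b\bomega^{q-1}\varrho^2)\Subset\mathcal Q$ (the $B_{8\varrho}$ and the backward part are what the measure-shrinking step underlying \cite[Proposition 2.1, Chapter 4]{DGV} consumes), yields $\eta_0,b\in(0,1)$ depending only on $C_o,C_1,n,q,\Lambda,c,a,\alpha$ with
\[
w\geq\eta_0\,a\bomega\quad\text{a.e.~in }B_{2\varrho}\times\big(s+\tfrac12 b\bomega^{q-1}\varrho^2,\,s+b\bomega^{q-1}\varrho^2\big].
\]

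Finally I would undo the truncation: because $\eta_0 a\leq a\leq\tfrac12 c$, we have $\eta_0 a\bomega\leq K$, and therefore $\min\{\muplus-u,K\}\geq\eta_0 a\bomega$ forces $\muplus-u\geq\eta_0 a\bomega$ on the same set; setting $\eta:=\eta_0 a$ gives the claim. I expect the only delicate point to be the bookkeeping in the first two paragraphs: one must check carefully that $\bar u$ is an admissible weak sub/supersolution, that $\Phi$ is bi-Lipschitz-equivalent to a linear function on the window $[0,K]$ with constants controlled by $c,\Lambda,q$ alone, and that the single rescaling $\tau=\bomega^{1-q}(t-s)$ simultaneously normalizes all terms of the energy and logarithmic estimates to the $p=2$ form, so that \cite[Proposition 2.1, Chapter 4]{DGV} is literally applicable; once this reduction is justified, both the measure-to-pointwise conclusion and the precise shape of the time interval are read off directly from the cited proposition.
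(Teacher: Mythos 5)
Your proposal coincides with the paper's proof, which consists entirely of the remark that the lemma is the analogue of \cite[Lemma 3.3]{part2} with the $p>2$ expansion of positivity \cite[Proposition 4.1, Chapter 4]{DGV} replaced by the $p=2$ result \cite[Proposition 2.1, Chapter 4]{DGV}; your reduction -- truncating at $\muplus-\tfrac12 c\bomega$ (resp.\ $\muminus+\tfrac12 c\bomega$) so that the values stay in a window comparable to $\bomega$ where the equation is uniformly parabolic, rescaling time by $\bomega^{1-q}$, applying the classical expansion of positivity, and undoing the truncation via $a\le\tfrac12 c$ -- is exactly the intended mechanism. The only point needing the care you already flag is that $w=\muplus-\bar u$ solves an equation with a bi-Lipschitz nonlinearity $\Phi$ in the time term rather than literally a $p=2$ equation, so either rerun the DGV argument from your normalized energy and logarithmic estimates, or, cleaner and parallel to the paper's own Proposition~\ref{prop:expansion-of-positivity-q}, apply \cite[Proposition 2.1, Chapter 4]{DGV} to $(\muplus)^q-\bar u^{\,q}$, which after the time rescaling is a genuine nonnegative supersolution of a uniformly parabolic equation, and transfer hypothesis and conclusion back using that $s\mapsto s^q$ is bi-Lipschitz on $\big[\tfrac12 c\bomega,\Lambda\bomega\big]$ with constants comparable to $\bomega^{q-1}$.
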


\subsubsection{Tools for the case away from zero}

Next we will state and prove tools for the obstacle free problem in the case where $u$ is bounded away from zero.
This is expressed in terms of $\mupm$ and $\bomega$ by assuming that
\begin{equation} \label{eq:above-zero-dg-local}
\muminus > \xi \bomega \quad \text{ and}\quad \tfrac12 \muplus \leq \theta^\frac{1}{q-1} \leq \tfrac{\xi + 1}{\xi} \muplus
\end{equation}
or
\begin{equation} \label{eq:below-zero-dg-local}
\muplus <- \xi \bomega \quad \text{ and}\quad \tfrac12 |\muminus| \leq \theta^\frac{1}{q-1} \leq \tfrac{\xi+1}{\xi} |\muminus|
\end{equation}
holds true for some constant $\xi \in (0,1)$.

First, we state an auxiliary lemma concerning the truncation levels used in this subsection.

\begin{lemma} \label{lem:away-zero-levels}
Let $\eta \in \big(0,\frac{\xi}{2}\big]$ and $k_\pm = \mupm \mp \eta \bomega$. If~\eqref{eq:above-zero-dg-local} holds, then
$$
\tfrac{\xi}{\xi + 1}\muplus < \muminus \leq \muplus,
$$
$$
\muminus \leq k_- < \tfrac{3}{2} \muminus,\quad \text{and }\quad \tfrac12 \muplus < k_+ \leq \muplus.
$$
Furthermore, $\theta^\frac{1}{q-1} \approx k_\pm$ up to a constant depending on $\xi$.

If~\eqref{eq:below-zero-dg-local} holds, then
$$
 \muminus \leq \muplus < \tfrac{\xi}{\xi +1} \muminus,
$$
$$
\muminus \leq k_- < \tfrac12 \muminus,\quad \text{and } \quad \tfrac32 \muplus < k_+ \leq \muplus.
$$
Furthermore, $\theta^\frac{1}{q-1} \approx |k_\pm|$ up to a constant depending on $\xi$.
\end{lemma}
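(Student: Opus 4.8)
The plan is to obtain every assertion by elementary manipulation of the three standing inequalities $\muminus\le\inf_{\mathcal Q}u$, $\muplus\ge\sup_{\mathcal Q}u$ and $\bomega\ge\muplus-\muminus$, together with whichever of \eqref{eq:above-zero-dg-local}, \eqref{eq:below-zero-dg-local} is in force and the restriction $\eta\le\frac\xi2$. There is no analytic input; the lemma is pure arithmetic bookkeeping.

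Assume first \eqref{eq:above-zero-dg-local}. The bound $\muminus\le\muplus$ is immediate from $\muminus\le\inf_{\mathcal Q}u\le\sup_{\mathcal Q}u\le\muplus$. For the lower bound I would combine $\muplus-\muminus\le\bomega$ with $\bomega<\muminus/\xi$ (the rearrangement of $\muminus>\xi\bomega$, legitimate since $\xi>0$) to get $\muplus<\tfrac{\xi+1}{\xi}\muminus$, i.e.\ $\muminus>\tfrac{\xi}{\xi+1}\muplus$. Since $\eta\bomega\ge0$ the inequalities $\muminus\le k_-$ and $k_+\le\muplus$ are trivial, while the estimate $\eta\bomega\le\tfrac\xi2\bomega<\tfrac12\muminus\le\tfrac12\muplus$ (using $\eta\le\frac\xi2$ and $\xi\bomega<\muminus\le\muplus$) gives $k_-=\muminus+\eta\bomega<\tfrac32\muminus$ and $k_+=\muplus-\eta\bomega>\tfrac12\muplus$. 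Finally, the hypothesis already furnishes $\theta^{1/(q-1)}\approx\muplus$ up to the factor $\tfrac{\xi+1}{\xi}$, the first step gives $\muplus\approx\muminus$, and the two-sided bounds just proved give $k_+\approx\muplus$ and $k_-\approx\muminus$; chaining these comparisons yields $\theta^{1/(q-1)}\approx k_\pm$ with a constant depending only on $\xi$.

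For \eqref{eq:below-zero-dg-local} I would first pass to absolute values. Here $\muminus\le\muplus<-\xi\bomega<0$, so setting $a:=-\muminus\ge b:=-\muplus>0$ turns $\muplus<-\xi\bomega$ into $b>\xi\bomega$ and $\muplus-\muminus\le\bomega$ into $a-b\le\bomega$; the reasoning of the previous paragraph then applies verbatim to $a,b$ (with $\theta^{1/(q-1)}\approx b$ coming from the hypothesis), and translating back via $\muminus=-a$, $\muplus=-b$, reversing inequality directions where a negative factor intervenes, produces exactly $\muminus\le\muplus<\tfrac{\xi}{\xi+1}\muminus$, $\muminus\le k_-<\tfrac12\muminus$, $\tfrac32\muplus<k_+\le\muplus$ and $\theta^{1/(q-1)}\approx|k_\pm|$. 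Since everything is arithmetic there is no genuine obstacle; the only point needing attention is tracking the direction of the inequalities when multiplying by negative quantities in this second case, which is precisely why it is cleanest to reduce to absolute values before computing.
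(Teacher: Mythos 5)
Your proof is correct: every inequality follows exactly as you compute from $\muminus\leq\inf_{\mathcal Q}u\leq\sup_{\mathcal Q}u\leq\muplus$, $\bomega\geq\muplus-\muminus$, the relevant alternative \eqref{eq:above-zero-dg-local} or \eqref{eq:below-zero-dg-local} and $\eta\leq\tfrac{\xi}{2}$, and the paper states Lemma~\ref{lem:away-zero-levels} without proof, treating it as precisely this kind of elementary bookkeeping. One cosmetic remark: in the second case the hypothesis \eqref{eq:below-zero-dg-local} furnishes $\theta^{\frac{1}{q-1}}\approx|\muminus|=a$ rather than $\approx b=|\muplus|$ as you write, but since you also establish $a\approx b$ up to a $\xi$-dependent constant, the final comparison $\theta^{\frac{1}{q-1}}\approx|k_\pm|$ is unaffected.
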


Then we give a version of De Giorgi type lemma.
\begin{lemma} \label{l.de_giorgi2_scheven2}
Let $Q_{\rho,\frac12 \theta \rho^2}(z_o) \Subset \mathcal{Q}$ be a parabolic cylinder such that either~\eqref{eq:above-zero-dg-local} or~\eqref{eq:below-zero-dg-local} holds true. Let $\eta \in \big(0,\frac{\xi}{2}\big]$.
Assume that $u$ is a locally bounded, local weak sub(super)solution to \eqref{eq:pde}$_1$  in the sense of Definition~\ref{def:localsol_pme} in $\mathcal{Q}$.
Then there exists a constant $\nu_1 = \nu_1(n,q,C_o,C_1,\xi) \in (0,1)$ such that if
$$
\left| \left\{ \pm (\boldsymbol{\mu}^\pm - u )  < \eta \boldsymbol{\omega} \right\}
\cap Q_{\rho,\frac12 \theta \rho^2}(z_o) \right|< \nu_1 \left| Q_{\rho,\frac12 \theta \rho^2}(z_o) \right|, 
$$
then
$$
 \pm( \boldsymbol{\mu}^\pm - u)  \geq \tfrac12 \eta \boldsymbol{\omega}\quad \text{ a.e. in } Q_{\frac{\rho}{2},\frac12 \theta \left(\frac{\rho}{2}\right)^2} (z_o)
$$
\end{lemma}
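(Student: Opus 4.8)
The plan is to run a standard De Giorgi iteration on the function $(u-k_\pm)_\mp$ with levels $k_\pm = \mupm \mp \eta\bomega$, using the Caccioppoli estimate from Lemma \ref{l.caccioppoli} as the energy input. I will treat the subsolution case (upper bound, sign $+$) in detail; the supersolution case is symmetric. Since under \eqref{eq:above-zero-dg-local} or \eqref{eq:below-zero-dg-local} the solution is bounded away from zero, Lemma \ref{lem:away-zero-levels} gives $\theta^{1/(q-1)} \approx |k_\pm|$ and hence $|s|^{q-1} \approx \theta$ on the relevant range of $s$; combined with Lemma \ref{lem:calg} this means $\mathfrak g_\pm(u,k) \approx \theta (u-k)_\pm^2$ on the cylinder. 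The effect is that the equation behaves like the heat equation after the time rescaling built into $\theta\rho^2$, so the energy estimate has the homogeneous (non-intrinsic) form and the classical De Giorgi scheme applies verbatim.

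The steps, in order. First I would fix the nested cylinders: set $\rho_j = \tfrac{\rho}{2}(1+2^{-j})$, $\theta\rho_j^2$ the corresponding time-lengths, $k_j = k_+ - 2^{-j-1}\eta\bomega$ increasing to $k_+$ (so $k_0 = k_+ - \tfrac12\eta\bomega$, $k_\infty = k_+$), and cutoff functions $\varphi_j$ equal to $1$ on $Q_{\rho_{j+1},\frac12\theta\rho_{j+1}^2}(z_o)$, vanishing on the parabolic boundary of $Q_{\rho_j,\frac12\theta\rho_j^2}(z_o)$, with $|\nabla\varphi_j|\lesssim 2^j/\rho$ and $|\partial_t\varphi_j|\lesssim 2^{2j}/(\theta\rho^2)$. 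Second, apply Lemma \ref{l.caccioppoli}(1) with level $k_j$ and cutoff $\varphi_j$; using $\mathfrak g_+(u,k_j) \approx \theta (u-k_j)_+^2$ and $|\partial_t\varphi_j^2|\approx 2^{2j}/(\theta\rho^2)$, the time term and the gradient term combine so that both $\esssup_t \int \varphi_j^2 (u-k_j)_+^2$ and $\iint \varphi_j^2|\nabla(u-k_j)_+|^2$ are bounded by $c\,4^j\rho^{-2}\iint_{Q_{\rho_j,\frac12\theta\rho_j^2}} (u-k_j)_+^2\,\dx\dt$. Third, introduce $Y_j := |\{u>k_j\}\cap Q_{\rho_j,\frac12\theta\rho_j^2}(z_o)|\,/\,|Q_{\rho_j,\frac12\theta\rho_j^2}(z_o)|$ and, via the parabolic Sobolev embedding applied to $\varphi_j(u-k_j)_+$ together with the measure-theoretic bound $(k_{j+1}-k_j)^2\,|\{u>k_{j+1}\}\cap Q_{\rho_{j+1},\cdots}| \le \iint (u-k_j)_+^2$, derive a recursive inequality of the form $Y_{j+1}\le C\,b^{j}\,Y_j^{1+\beta}$ for explicit $C,b>1$ and $\beta = \tfrac{2}{n+2}>0$ (the time rescaling by $\theta$ is precisely what makes the constant here independent of $\bomega$; note $k_{j+1}-k_j = 2^{-j-2}\eta\bomega$ contributes the $b^j$ factor). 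Fourth, invoke the standard fast-geometric-convergence lemma (e.g.\ \cite[Chapter I, Lemma 4.1]{DiBenedetto} or the version in \cite{DGV}): there is $\nu_1\in(0,1)$ depending only on $C,b,\beta$ — hence on $n,q,C_o,C_1,\xi$ — such that $Y_0\le\nu_1$ forces $Y_j\to 0$, i.e.\ $u\le k_+ = \mupm-\eta\bomega$ a.e.\ on $Q_{\rho/2,\frac12\theta(\rho/2)^2}(z_o)$, which is $-( \muplus - u)\le-\eta\bomega$, the claimed conclusion; one checks $Y_0 \le |\{u>k_0\}\cap Q_{\rho,\frac12\theta\rho^2}|/|Q_{\rho,\frac12\theta\rho^2}| \le \nu_1$ is exactly the hypothesis since $k_0 = \muplus - \tfrac12\eta\bomega$ wait — more precisely $\{u > k_0\} \subset \{ (\muplus - u) < \tfrac12\eta\bomega\} \subset \{(\muplus-u)<\eta\bomega\}$, so the hypothesis with $\nu_1$ chosen no larger than the abstract threshold suffices.

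The main obstacle is bookkeeping the equivalence $\mathfrak g_\pm(u,k_j)\approx\theta(u-k_j)_\pm^2$ uniformly in $j$: I must check that on the support of $(u-k_j)_+$, i.e.\ where $k_0 \le u \le \muplus$, Lemma \ref{lem:away-zero-levels} really gives $|u|+|k_j| \approx |k_+| \approx \theta^{1/(q-1)}$ with constants depending only on $\xi$, so that Lemma \ref{lem:calg} upgrades to the two-sided bound $\tfrac1c\theta(u-k_j)_+^2 \le \mathfrak g_+(u,k_j) \le c\theta(u-k_j)_+^2$ with $c=c(q,\xi)$; this is where the structural hypotheses \eqref{eq:above-zero-dg-local}/\eqref{eq:below-zero-dg-local} are used, and it is what guarantees the iteration constants do not degenerate as $\bomega\to0$. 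Everything else is the textbook De Giorgi argument applied after this reduction.
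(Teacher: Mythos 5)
Your overall strategy is exactly the one used in the paper: a De Giorgi iteration based on the energy estimate of Lemma~\ref{l.caccioppoli}, where Lemma~\ref{lem:away-zero-levels} together with Lemma~\ref{lem:calg} is used to replace $\mathfrak{g}_\pm(u,k_j)$ by $\theta\,(u-k_j)_\pm^2$ up to constants depending only on $q$ and $\xi$, so that the scheme becomes the standard homogeneous one and the fast geometric convergence lemma produces $\nu_1=\nu_1(n,q,C_o,C_1,\xi)$. (The paper runs the supersolution case under~\eqref{eq:above-zero-dg-local} and arrives at $Y_{j+1}\le c\,2^{2j}Y_j^{1+\frac{1}{n+2}}$; your squared variant with exponent $1+\frac{2}{n+2}$ is equally admissible.)

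There is, however, a concrete slip in the level bookkeeping that breaks the initialization as written. You set $k_j=k_+-2^{-j-1}\eta\bomega$ with $k_+=\muplus-\eta\bomega$, so $k_0=\muplus-\tfrac32\eta\bomega$, and hence $Y_0$ measures the set $\{\muplus-u<\tfrac32\eta\bomega\}$, which is \emph{larger} than the set $\{\muplus-u<\eta\bomega\}$ controlled by the hypothesis; the inclusion you assert at the end, $\{u>k_0\}\subset\{\muplus-u<\tfrac12\eta\bomega\}$, is false for this $k_0$, and your parenthetical remark that $k_0=\muplus-\tfrac12\eta\bomega$ contradicts your own definition. In addition, with your levels the effective shift $\tfrac32\eta$ may exceed $\tfrac{\xi}{2}$, so Lemma~\ref{lem:away-zero-levels} as stated does not directly cover $k_0$ (a direct computation still gives $k_0\ge\tfrac14\muplus>0$, but this should be said). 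The fix is the standard one and is exactly what the paper does on the mirrored side: let the levels increase from the hypothesis level to the conclusion level, $k_j=\muplus-\big(\tfrac12+2^{-j-1}\big)\eta\bomega$, so that $k_0=\muplus-\eta\bomega$, $k_j\uparrow\muplus-\tfrac12\eta\bomega$, $Y_0$ is precisely the quantity bounded by $\nu_1$ in the hypothesis, and the terminal statement $\big|\{u>\muplus-\tfrac12\eta\bomega\}\cap Q_{\frac{\rho}{2},\frac12\theta\left(\frac{\rho}{2}\right)^2}(z_o)\big|=0$ is exactly the claimed conclusion $\muplus-u\ge\tfrac12\eta\bomega$. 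With this correction the remainder of your argument goes through and coincides with the paper's proof.
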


\begin{proof}
We prove the result for supersolutions in case~\eqref{eq:above-zero-dg-local}, where $u$ is above zero.
The considerations for the remaining three cases, i.e.~for subsolutions in case~\eqref{eq:above-zero-dg-local} and for super- and subsolutions in case~\eqref{eq:below-zero-dg-local} are analogous.
We omit $z_o$ for simplicity and start the proof by defining
$$
	\begin{array}{c}
		k_j = \boldsymbol{\mu}^- + \Big( \frac{\eta}{2} + \frac{\eta}{2^{j+1}} \Big) \boldsymbol{\omega},\,
		\rho_j = \frac{\rho}{2} + \frac{\rho}{2^{j+1}},\,
		B_j = B_{\rho_j},\,
		Q_j := Q_{\rho_j, \frac12 \theta \rho_j^2}.
	\end{array}
$$
Observe that clearly $k_j > 0$ for all $j \in \N_0$.
Let $0 \leq \varphi \leq 1$ be a cut-off function that equals $1$ in $Q_{j+1}$ and vanishes on the parabolic boundary of $Q_j$ such that
$$
\left| \nabla \varphi \right| \leq c\frac{2^j}{\rho}\quad \text{ and }\quad \left| \partial_t \varphi \right|\leq c \frac{2^{2j}}{\theta \rho^2}.
$$
From the fact that $0 < \boldsymbol{\mu}^- \leq u < k_j$ in the set $A_j := \{ u < k_j \} \cap Q_j$ and the energy estimate in Lemma~\ref{l.caccioppoli} (2), we obtain
\begin{align*}
	\min&\left\{ \left( \boldsymbol{\mu}^- \right)^{q-1}, k_j^{q-1} \right\}
	\esssup_{-\frac12\theta \rho_j^2<t<0} \int_{{B}_j} \varphi^2( u-{k}_j)_-^2\, \d x 
	+ \iint_{{Q}_j} \varphi^2| \nabla (u-{k}_j)_- |^2 \, \d x \d t \\
	&\leq
	\esssup_{-\frac12\theta \rho_j^2<t<0} \int_{{B}_j} \varphi^2 \left( |u| + |k_j| \right)^{q-1} ( u-{k}_j)_-^2\, \d x 
	+ \iint_{{Q}_j} \varphi^2 | \nabla (u-k_j)_- |^2 \, \d x \d t \\
	&\leq
	c \iint_{Q_j} (u-k_j)_-^2 |\nabla \varphi|^2 \, \d x \d t + c \iint_{Q_j} (|u| + |k_j|)^{q-1} (u-k_j)_-^2 |\partial_t \varphi^2| \, \d x \d t \\
	&\leq
	c \frac{2^{2j}}{\rho^2} \left( 1 + \max \left\{ \left(\boldsymbol{\mu}^- \right)^{q-1}, k_j^{q-1} \right\} \theta^{-1} \right) \iint_{Q_j} (u-k_j)_-^2 \, \d x \d t \\
	&\leq 
	c \frac{2^{2j} \left( \eta \boldsymbol{\omega}\right)^2}{\rho^2} \left( 1 + \max \left\{ \left( \boldsymbol{\mu}^- \right)^{q-1}, k_j^{q-1} \right\} \theta^{-1} \right) \left| A_j\right| \\
	&\leq c \frac{2^{2j} \left( \eta \boldsymbol{\omega}\right)^2}{\rho^2} \left| A_j\right|
\end{align*}
with a constant $c = c(C_o,C_1,n,q,\xi)$, where we used Lemma~\ref{lem:away-zero-levels}.
With these estimates at hand, we infer in particular
$$
\esssup_{-\frac12\theta \rho_j^2<t<0} \int_{{B}_j} \varphi^2 ( u-{k}_j)_-^2\, \d x \leq c \frac{2^{2j} \left( \eta \boldsymbol{\omega}\right)^2}{\theta \rho^2} \left| A_j\right|.
$$
Since $k_j - u \geq k_j -  k_{j+1} = 2^{-(j+2)} \eta \boldsymbol{\omega}$ in the set $\{u \leq  k_{j+1}\}$, by H\"older's and Sobolev's inequalities~\cite[Proposition 4.1, Chapter 2]{DGV} we obtain that
\begin{align*}
	\frac{\eta \boldsymbol{\omega}}{2^{j+2}} &\left| A_{j+1} \right|
	\leq
	\iint_{{Q}_j}  (u-  k_j )_- \varphi \, \d x \d t \\
	&\leq
	c \left( \iint_{ Q_j} [ (u- k_j)_- \varphi ]^\frac{2(n+2)}{n} \, \d x \d t \right)^\frac{n}{2(n+2)} \left| A_j \right|^{1- \frac{n}{2(n+2)}} \\
	&\leq
	\left( \iint_{ Q_j} | \nabla [ (u- k_j)_- \varphi ] |^2 \, \d x \d t \right)^\frac{n}{2(n+2)}
	\left( \esssup_{-\frac12\theta  \rho_j^2 < t < 0} \int_{ B_j} \varphi^2(u- k_j)_-^2 \, \d x \right)^\frac{1}{n+2} \\
	&\phantom{=}
	\cdot \left| A_j \right|^{1- \frac{n}{2(n+2)}} \\
	&\leq
	c \left( \frac{2^{2j} (\eta \boldsymbol{\omega})^2 }{\rho^2} \right)^\frac{n}{2(n+2)} \left( \frac{2^{2j} (\eta \boldsymbol{\omega})^2}{\theta \rho^2} \right)^\frac{1}{n+2} \left| A_j \right|^{1+\frac{1}{n+2}} \\
	&=
	c \frac{2^j \eta \boldsymbol{\omega}}{\theta^\frac{1}{n+2} \rho} \left| A_j \right|^{1+ \frac{1}{n+2}}.
\end{align*}
Dividing by $|Q_{j+1}|$ and denoting $Y_j = |A_j| / |Q_j|$, we conclude that
$$
Y_{j+1} \leq c 2^{2j} Y_{j}^{1+ \frac{1}{n+2}}
$$
for a constant $c = c(C_o,C_1,n,q,\xi)$. Setting $\nu_1 \leq c^{-(n+2)} 4^{-(n+2)^2}$,
we conclude the proof by using fast geometric convergence lemma~\cite[Lemma 5.1, Chapter 2]{DGV}.
\end{proof}

In the case where $u$ is away from zero, the next lemma transfers positivity from a measure condition at a single time slice to a pointwise estimate in a whole cylinder. The proof applies Lemma~\ref{l.de_giorgi2_scheven2}.

\begin{lemma} \label{l.supersub-slicewise-to-pointwise}
Let $Q_{\rho, \theta \rho^2}(z_o) \Subset \mathcal{Q}$ be a parabolic cylinder such that either~\eqref{eq:above-zero-dg-local} or~\eqref{eq:below-zero-dg-local} holds true. Assume that $u$ is a locally bounded, local weak sub(super)solution in $\mathcal{Q}$ according to Definition~\ref{def:localsol_pme}. Then for any $\nu \in (0,1)$ there exists a constant $a = a(n,q,C_o,C_1,\nu,\xi) \in \big(0,\frac{1}{64}\big]$ such that if
$$
\left| \left\{ \pm ( \boldsymbol{\mu}^\pm - u(\cdot,t) )  \geq \tfrac38 \bomega \right\} \cap B_{\rho}(x_o) \right| > \nu |B_{\rho}(x_o)|
$$
for every $t \in (t_o-\theta\rho^2,t_o]$, then
$$
\pm (\boldsymbol{\mu}^\pm - u) \geq a \bomega \quad \text{ a.e. in } Q_{\frac{\rho}{2}, \frac12 \theta \left( \frac{\rho}{2} \right)^2}(z_o).
$$
\end{lemma}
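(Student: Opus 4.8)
The plan is to reduce the statement, via a De Giorgi shrinking estimate, to the De Giorgi type Lemma~\ref{l.de_giorgi2_scheven2}. By symmetry it suffices to treat a supersolution $u$ under~\eqref{eq:above-zero-dg-local}; the remaining three cases (a subsolution under~\eqref{eq:above-zero-dg-local}, and super-/subsolutions under~\eqref{eq:below-zero-dg-local}) are completely analogous. Then $\pm(\mupm-u)=u-\muminus\geq 0$, and we must propagate its positivity. Write $Q:=Q_{\rho,\frac12\theta\rho^2}(z_o)$ and note $Q\subset Q_{\rho,\theta\rho^2}(z_o)\Subset\mathcal{Q}$, so the measure hypothesis is available on every time slice of $Q$. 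The point that makes the away-from-zero regime manageable is that, by~\eqref{eq:above-zero-dg-local} together with Lemma~\ref{lem:away-zero-levels}, for every level $k$ with $\muminus\leq k\leq\muminus+\frac38\bomega$ one has $(|u|+|k|)^{q-1}\approx\theta$ on $\{u<k\}$ with constants depending only on $\xi$; hence the energy estimate Lemma~\ref{l.caccioppoli}(2) reduces, on such levels, to the non-degenerate ($p=2$) energy estimate with the fixed intrinsic factor $\theta$.

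\emph{Step 1: a shrinking estimate.} For $s\in\N$ set $k_i:=\muminus+2^{-i-2}\bomega$, $i=0,\dots,s$, and $\eta:=2^{-s-2}$, so that $\{u-\muminus<\eta\bomega\}=\{u<k_s\}$. I claim
\[
\bigl|\{\,u<k_s\,\}\cap Q\bigr|\ \leq\ \frac{c}{\sqrt{s}}\,|Q|,\qquad c=c(n,q,C_o,C_1,\nu,\xi).
\]
This is the classical De Giorgi shrinking lemma in the $p=2$ setting, proved exactly as Lemma~\ref{lem:shrinking-deg} (cf.\ \cite[Lemma 4.2]{BDL}). Because each $k_i\leq\muminus+\frac14\bomega<\muminus+\frac38\bomega$, the slicewise hypothesis gives $|\{u(\cdot,t)>k_i\}\cap B_\rho|\geq\nu|B_\rho|$ for every admissible $t$, which bounds below the denominator in De Giorgi's isoperimetric inequality applied slicewise between the consecutive levels $k_{i+1}<k_i$. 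Integrating in time, using the Cauchy--Schwarz inequality, estimating the gradient integrals over the pairwise disjoint strips $\{k_{i+1}<u<k_i\}$ by a single bound from Lemma~\ref{l.caccioppoli}(2) (with a cutoff exploiting the room afforded by $Q_{\rho,\theta\rho^2}(z_o)\Subset\mathcal{Q}$, together with the remark above), and summing over $i=0,\dots,s-1$, one obtains the asserted $1/\sqrt{s}$ decay.

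\emph{Step 2: conclusion.} Let $\nu_1\in(0,1)$ be the constant of Lemma~\ref{l.de_giorgi2_scheven2} (it depends only on $n,q,C_o,C_1,\xi$) and choose $s=s(n,q,C_o,C_1,\nu,\xi)$ so large that $\frac{c}{\sqrt{s}}\leq\nu_1$, that $\eta=2^{-s-2}\leq\frac\xi2$, and that $a:=\frac12\eta=2^{-s-3}\leq\frac1{64}$. With this $s$, Step 1 gives $|\{u-\muminus<\eta\bomega\}\cap Q|\leq\nu_1|Q|$, which is precisely the hypothesis of Lemma~\ref{l.de_giorgi2_scheven2} on $Q=Q_{\rho,\frac12\theta\rho^2}(z_o)$ for the admissible parameter $\eta$. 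That lemma then yields $u-\muminus\geq\frac12\eta\bomega=a\bomega$ a.e.\ in $Q_{\frac\rho2,\frac12\theta(\frac\rho2)^2}(z_o)$, which is the assertion, with $a$ depending only on $n,q,C_o,C_1,\nu,\xi$.

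\emph{Main obstacle.} The substance of the proof is Step 1: running the De Giorgi iteration in intrinsically scaled cylinders while keeping every constant independent of the truncation level $k_i$, which is exactly where~\eqref{eq:above-zero-dg-local} and Lemma~\ref{lem:away-zero-levels} are used to collapse the porous-medium energy estimate to its non-degenerate form with the fixed factor $\theta$. A further technical point is the interplay of the radii and the available room: the measure information sits on $B_\rho$ at every slice of $(t_o-\theta\rho^2,t_o]$, while the energy estimate must be run on $Q_{\rho,\frac12\theta\rho^2}(z_o)$ and the output of Lemma~\ref{l.de_giorgi2_scheven2} is only on the half-cylinder $Q_{\frac\rho2,\frac12\theta(\frac\rho2)^2}(z_o)$; keeping track of these scales is routine but must be done carefully. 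Apart from this, the argument is standard bookkeeping.
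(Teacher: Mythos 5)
Your proposal is correct and follows essentially the same route as the paper: a slicewise De Giorgi isoperimetric/shrinking argument over levels between $\muminus$ and $\muminus+\tfrac38\bomega$ (using the time-slice measure hypothesis for the lower bound and the away-from-zero condition~\eqref{eq:above-zero-dg-local} with Lemma~\ref{lem:away-zero-levels} to make the energy estimate of Lemma~\ref{l.caccioppoli} effectively non-degenerate with the fixed factor $\theta$), yielding the $1/\sqrt{s}$ decay, followed by an application of Lemma~\ref{l.de_giorgi2_scheven2} to obtain the pointwise bound with $a$ a dyadic fraction of $\bomega$. The only differences are cosmetic (your levels $\muminus+2^{-i-2}\bomega$ versus the paper's $\muminus+\xi 2^{-j}\bomega$, with your explicit check that the final $\eta\leq\tfrac{\xi}{2}$), so the argument matches the paper's proof in substance.
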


\begin{proof}
We prove the result for supersolutions in case~\eqref{eq:above-zero-dg-local}. The remaining three cases are analogous. 
For simplicity, we let $z_o= (0,0)$ and omit it from the notation. Define $Q_2 = B_\rho \times \big(-\tfrac12 \theta \rho^2,0\big]$ and $Q_1 = B_\rho \times \big(-  \theta \rho^2,0\big]$ so that $Q_2 \subset Q_1$. Further, set $k_j= \muminus + \tfrac{\xi}{2^j}\bomega$ for $j \in \N_{\geq3}$ and $A_j = \{u < k_j\}\cap Q_2$. By De Giorgi's isoperimetric inequality~\cite[Lemma 2.2, Chapter 2]{DGV} we obtain
\begin{align*}
	( k_j - k_{j+1} ) &| \{ u(\cdot,t) < k_{j+1} \} \cap B_\rho | \\
	&\leq
	\frac{c(n) \rho^{n+1}}{| \{ u(\cdot,t) > k_j \} \cap B_\rho |}
	\int_{B_\rho \cap \{ k_{j+1} < u(\cdot,t) < k_j \}} | \nabla u | \, \d x  \\
	&\leq
	\frac{c(n) \rho}{\nu} \int_{B_\rho \cap \{ k_{j+1} < u(\cdot,t) < k_j \}} | \nabla u | \, \d x.
\end{align*}
Integrating over $\big(- \tfrac12 \theta \rho^2, 0\big)$, we find that
\begin{align*}
( k_j - k_{j+1} ) |A_{j+1}| &\leq \frac{c(n) \rho}{\nu} \iint_{A_j \setminus A_{j+1}} \left| \nabla u \right| \, \d x \d t \\
&\leq \frac{c(n) \rho}{\nu} \left| A_j \setminus A_{j+1} \right|^\frac12 \left( \iint_{A_j \setminus A_{j+1}} |\nabla u|^2 \, \d x \right)^\frac12 \\
&\leq \frac{c(n) \rho}{\nu} \left| A_j \setminus A_{j+1} \right|^\frac12 \left( \iint_{Q_2} \left| \nabla (u-k_j)_- \right|^2 \, \d x \d t \right)^\frac12.
\end{align*}
By applying Lemma~\ref{l.caccioppoli} (2) we get
\begin{align*}
\iint_{Q_2} | \nabla (u-k_j)_-|^2 \, \dx \d t &\leq c \left( \frac{1}{\rho^2} + \frac{\max \{k_j^{q-1}, \left(\boldsymbol{\mu}^-\right)^{q-1}\}}{ \theta \rho^2} \right) \iint_{Q_1} (u-k_j)_-^2 \, \d x \d t \\
&\leq \frac{c}{ \rho^2} \left( \frac{\boldsymbol{\omega}}{2^j} \right)^2 |Q_1|,
\end{align*}
for $c = c(C_o,C_1,q,\xi) > 0$ by using Lemma~\ref{lem:away-zero-levels}. 
By combining the two estimates above and using $k_j - k_{j+1} = \xi 2^{-(j+1)} \bomega$ we have
$$
|A_{j+1}|^2 \leq \frac{c}{\nu^2} |A_j \setminus A_{j+1}| |Q_1|.
$$
By summing this over $j= 3, ..., s_o + 1$ for some $s_o \in \N_{\geq 3}$ we obtain
$$
s_o |A_{s_o+2}|^2 \leq \frac{c}{\nu^2}|Q_1|^2 \leq \frac{c}{\nu^2}|Q_2|^2
$$
for $c = c(C_o,C_1,n,q,\xi) >0$. By choosing $s_o$ large enough, we have
$$
\left| \left\{ u < \muminus + \tfrac{\xi}{2^{s_o+2}}\bomega \right\} \cap Q_{\rho, \frac12 \theta \rho^2} \right| < \nu_1 |Q_{\rho, \frac12 \theta \rho^2}|.
$$
At this stage we apply Lemma~\ref{l.de_giorgi2_scheven2} to conclude that 
\begin{align*}
u &\geq \muminus + \tfrac{\xi}{2^{s_o+3}} \bomega\quad \text{ a.e. in } Q_{\frac{\rho}{2}, \frac12 \theta \left(\frac{\rho}{2} \right)^2}.
\qedhere
\end{align*}
\end{proof}

\subsection{Tools for sub(super)solutions near the initial boundary} \label{subsec:initial-case}

Throughout this section, we are concerned with weak sub(super)solutions to \eqref{eq:pde}$_1$ in a subset $\mathcal{Q}^+ = Q_{R,S}^+(\hat x_o,0) \subset \Omega_T$ attaining an initial datum.
Next, we prove an energy estimate near the initial boundary, see also~\cite[Proposition 2.2]{part2} and \cite[Proposition 3.2]{BDL}.

\begin{lemma} \label{lem:energy-est-initial}
Let $u$ be a weak sub(super)solution to \eqref{eq:pde}$_1$ in $\Omega_T$ with an initial datum $g_o \in L^\infty_{\loc}(\Omega)$. There exists a constant $c = c(C_o,C_1)>0$ such that for all cylinders $Q_{\rho,s}^+(x_o,0) \subset \Omega_T$ and every $k \in \R$ satisfying
\[
\begin{cases}
k\geq \sup_{B_\rho(x_o)} g_o \quad \text{ for weak subsolutions, }\\
k\leq \inf_{B_\rho(x_o)} g_o \quad \text{ for weak supersolutions, }
\end{cases}
\]
there holds
\begin{align*}
\sup_{0<t<s} &\int_{B_\rho(x_o) \times \{t\}} \eta^2 \mathfrak{g}_\pm (u,k)\, \d x + \iint_{Q_{\rho,s}^+(x_o,0)} \eta^2 | \nabla (u-k)_\pm |^2 \, \d x \d t \\
&\leq c \iint_{Q_{\rho,s}^+(x_o,0)} (u-k)_\pm^2 |\nabla \eta|^2 \, \d x \d t
\end{align*}
for every nonnegative, time-independent piecewise smooth cutoff function $\eta$ vanishing on $\partial B_\rho(x_o)$.
\end{lemma}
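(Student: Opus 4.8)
The plan is to derive the estimate by a Caccioppoli-type argument: test the weak formulation \eqref{eq:localsol_pme} with the truncation $(u-k)_\pm$ localised in space by $\eta$ and in time by a Lipschitz cutoff, with the parabolic term made rigorous through the time-mollification device used in the proof of Lemma~\ref{lem:maximum-principle}. I describe the subsolution case, working with $(u-k)_+$ and the hypothesis $k\ge\sup_{B_\rho(x_o)}g_o$; the supersolution case is entirely symmetric.

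First I would fix $\tau\in(0,s)$ and small $h,\delta>0$ with $\tau+\delta<s$, and take the piecewise affine time cutoff $\zeta$ which increases linearly from $0$ to $1$ on $(0,h)$, equals $1$ on $(h,\tau)$, and decreases linearly back to $0$ on $(\tau,\tau+\delta)$. Writing \eqref{eq:localsol_pme} in the time-mollified form and testing with $\varphi=\zeta\eta^2\,\tfrac{(u-k)_+}{(u-k)_++\sigma}$ (the auxiliary parameter $\sigma>0$ is sent to $0$ at the end, and is convenient since $u$ is not assumed bounded) --- admissible because $\varphi\ge0$ and $\varphi\in L^2(0,T;H^1_0(B_\rho(x_o)))$ as $\eta$ vanishes on $\partial B_\rho(x_o)$ --- and then letting the mollification parameter and afterwards $\sigma$ tend to $0$ exactly as in the proof of Lemma~\ref{lem:maximum-principle} (using the monotonicity of $s\mapsto\boldsymbol{s}^q$ and $(u-k)_+\partial_t\u^q=\partial_t\mathfrak{g}_+(u,k)$ in the limiting sense), one obtains
\[
\iint_{Q_{\rho,s}^+(x_o,0)}\big[\zeta\eta^2\,\partial_t\mathfrak{g}_+(u,k)+\mathbf{A}(x,t,\u^q,\nabla u)\cdot\nabla\big(\zeta\eta^2(u-k)_+\big)\big]\,\dx\dt\leq 0.
\]

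For the divergence term I would expand $\nabla(\zeta\eta^2(u-k)_+)=\zeta\eta^2\nabla(u-k)_++\zeta(u-k)_+\nabla\eta^2$, use the coercivity \eqref{eq:structure}$_1$ on the first contribution and the growth \eqref{eq:structure}$_2$ with Young's inequality on the second, bounding this term below by $\tfrac{C_o}{2}\iint\zeta\eta^2|\nabla(u-k)_+|^2\,\dx\dt-c\iint(u-k)_+^2|\nabla\eta|^2\,\dx\dt$ with $c=c(C_o,C_1)$. For the parabolic term, integrating by parts in $t$ and using $\zeta(0)=\zeta(s)=0$ turns it into $-\iint\zeta'\eta^2\mathfrak{g}_+(u,k)\,\dx\dt$, i.e.\ $-\tfrac1h\iint_{B_\rho(x_o)\times(0,h)}\eta^2\mathfrak{g}_+(u,k)\,\dx\dt+\tfrac1\delta\iint_{B_\rho(x_o)\times(\tau,\tau+\delta)}\eta^2\mathfrak{g}_+(u,k)\,\dx\dt$. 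Combining the two bounds, letting $\delta\downarrow0$ so that by Lebesgue's differentiation theorem the last integral tends to $\int_{B_\rho(x_o)\times\{\tau\}}\eta^2\mathfrak{g}_+(u(\cdot,\tau),k)\,\dx$ for a.e.\ $\tau$, and keeping the average over $(0,h)$ on the right, I am reduced to showing $\tfrac1h\iint_{B_\rho(x_o)\times(0,h)}\eta^2\mathfrak{g}_+(u,k)\,\dx\dt\to0$ as $h\downarrow0$.

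This last point is where the sign condition $k\ge\sup_{B_\rho(x_o)}g_o$ enters: it gives $(u-k)_+\le(u-g_o)_+$ on $B_\rho(x_o)$, and combining Lemma~\ref{lem:calg} with elementary estimates, split according to whether $q\le1$ or $q>1$ as in the proof of Lemma~\ref{lem:boundedness}, yields $\mathfrak{g}_+(u,k)\le c\big((u-g_o)_+^{q+1}+|g_o|^{q-1}(u-g_o)_+\big)$, with only the first summand when $q\le1$. Since $g_o\in L^\infty_{\loc}(\Omega)$, H\"older's inequality and the initial condition \eqref{eq:pme_initial_values} then give the claimed convergence. Passing $h\downarrow0$ and taking the supremum over $\tau\in(0,s)$ produces the asserted estimate. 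I expect the genuinely delicate part to be precisely this handling of the parabolic term: making the time derivative rigorous by mollification and exploiting \eqref{eq:pme_initial_values} to absorb the initial contribution, without appealing to any pointwise initial trace of $u$; the remainder is a standard energy computation.
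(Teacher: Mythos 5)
Your mechanism for the only genuinely new feature of this lemma --- the initial boundary term --- is the same as the paper's: average over a short initial interval $(0,h)$, use the sign condition on $k$ together with Lemma~\ref{lem:calg} to bound $\mathfrak g_\pm(u,k)$ by quantities controlled by $(u-g_o)_\pm$, and let \eqref{eq:pme_initial_values} make the averaged term vanish. The genuine gap is in the step that is supposed to produce your intermediate inequality. Testing with $\varphi=\zeta\eta^2\tfrac{(u-k)_+}{(u-k)_++\sigma}$ and sending $\sigma\downarrow0$ does \emph{not} give a Caccioppoli estimate: this is the bounded regularization from Lemma~\ref{lem:maximum-principle}, and after the mollification limit the parabolic term involves $q\int_k^u\tfrac{|s|^{q-1}(s-k)_+}{(s-k)_++\sigma}\,\ds$, which tends to $(\u^q-\boldsymbol{k}^q)_+$ as $\sigma\downarrow0$, not to $\mathfrak g_+(u,k)$; moreover the coercive part of the diffusion term carries the factor $\tfrac{\sigma}{[(u-k)_++\sigma]^2}$ and vanishes in the limit, so no term $\iint\eta^2|\nabla(u-k)_+|^2\,\dx\dt$ survives. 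What this test function yields is an $L^1$/maximum-principle type estimate, whereas your displayed inequality, with $\partial_t\mathfrak g_+(u,k)$ and $\mathbf A\cdot\nabla\big(\zeta\eta^2(u-k)_+\big)$, corresponds to testing with $\zeta\eta^2(u-k)_+$ itself; making that (unbounded) test function rigorous through the time mollification is precisely the nontrivial content of the interior energy estimate. The paper sidesteps this entirely: it applies the already established Lemma~\ref{l.caccioppoli} (quoted from \cite{part2,BDL}) on $B_\rho(x_o)\times(t_1,s)$, which produces the additional right-hand side term $\int_{B_\rho(x_o)\times\{t_1\}}\eta^2\mathfrak g_\pm(u,k)\,\dx$, then averages over $t_1\in(0,h)$ and lets $h\downarrow0$. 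You should either do the same, or, if you want a self-contained derivation, test with $\zeta\eta^2\min\{(u-k)_+,m\}$ and let $m\to\infty$ (i.e.\ follow the truncation argument of \cite{BDL}) rather than use the $\sigma$-regularized quotient.

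A smaller point: for $q>1$ the bound $\mathfrak g_+(u,k)\le c\big((u-g_o)_+^{q+1}+|g_o|^{q-1}(u-g_o)_+\big)$ does not hold with $c=c(q)$; the correct elementary estimate has $(u-g_o)_+^2$ in the second summand (or one must let $c$ depend on $\|g_o\|_{L^\infty(B_\rho(x_o))}$). Since this bound is only used to show that the averaged integral over $(0,h)$ tends to zero, either repair is harmless: with $g_o\in L^\infty_{\loc}(\Omega)$, H\"older's inequality and \eqref{eq:pme_initial_values} still give the convergence. The paper argues slightly differently at this point, applying H\"older directly to $(|u|+|k|)^{q-1}(u-k)_\pm^2$ without separating off $g_o$. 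The rest of your argument (time cutoff, Lebesgue differentiation in $\tau$, coercivity and Young's inequality for the diffusion term) is standard and fine once the test-function step is repaired.
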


\begin{proof}
The proof follows the idea from \cite[Proposition 3.2]{BDL}.
Applying Lemma \ref{l.caccioppoli} with $t_o=s$, $s-t_1$ in place of $s$ for some $t_1>0$ and $\varphi \equiv \eta$ in $B_\rho(x_o) \times (t_1,s)$, integrating over $t_1 \in (0,h)$ for some $h>0$, dividing by $h$ and using that all terms are non-negative yields that
\begin{align} \label{eq:prelim-energy-init}
	\esssup_{h < t < s} &\int_{B_\rho(x_o) \times \{t\}} \eta^2 \mathfrak{g}_\pm(u,k) \, \d x
	+ \tfrac{C_o}{2} \iint_{B_\rho(x_o) \times (h,s)} \eta^2 |\nabla (u-k)_\pm|^2 \, \d x \d t \nonumber \\
	&\leq
	c \iint_{Q_{\rho,s}^+(x_o,0)} (u-k)_\pm^2|\nabla \eta|^2 \, \d x \d t + 2 \bint_0^h \int_{B_\rho(x_o)} \eta^2 \mathfrak{g}_\pm(u,k)\, \d x \d t.
\end{align}
In the degenerate case $0<q<1$, by Lemma \ref{lem:calg} and the triangle inequality we estimate
$$
	\bint_0^h \int_{B_\rho(x_o)} \eta^2 \mathfrak{g}_\pm(u,k)
	\leq
	c(q) \|\eta\|_{L^\infty(\Omega)}^2 \bint_0^h \int_{B_\rho(x_o)} (u-k)_\pm^{q+1} \, \d x \d t.
$$
In the singular case $q>1$, by Lemma \ref{lem:calg} and H\"older's inequality we find that
\begin{align*}
	\bint_0^h \int_{B_\rho(x_o)}& \eta^2 \mathfrak{g}_\pm(u,k) \, \d x \d t\\
	&\leq
	c(q) \|\eta\|_{L^\infty(\Omega)}^2
	\bigg( \bint_0^h \int_{B_\rho(x_o)} (|u|+|k|)^{q+1} \, \d x \d t \bigg)^\frac{q-1}{q+1}\\
	&\phantom{=}\cdot
	\bigg( \bint_0^h \int_{B_\rho(x_o)} (u-k)_\pm^{q+1} \, \d x \d t \bigg)^\frac{2}{q+1}.
\end{align*}
Therefore, recalling the choice of the level $k$ and using that $u$ takes the initial condition in the sense of Definition \ref{def:localsol_pme}, we conclude that the second term on the right-hand side of~\eqref{eq:prelim-energy-init} vanishes in the limit $h \downarrow 0$ and we obtain the claimed energy estimate.
\end{proof}

In this section, we will use parameters $\muminus, \muplus \in \R$ and $\bomega > 0$ satisfying
$$
\muminus \leq \inf_{\mathcal{Q}^+} u,\quad \muplus \geq \sup_{\mathcal{Q}^+} u \quad \text{and}\quad \bomega \geq \muplus - \muminus. 
$$

\subsubsection{Tools for the case near zero}

We recall a result on propagation of positivity with pointwise information given at the initial time slice, see~\cite[Lemma 3.2]{part2}. It will be used in reduction in oscillation up to the initial boundary.

\begin{lemma} \label{l.de-giorgi-initial}
Let $u$ be a locally bounded weak sub(super)solution to \eqref{eq:pde}$_1$ in $\mathcal{Q}^+$. Set $\theta = (\xi \bomega)^{q-1}$ for some $\xi \in (0,1)$ and suppose that $Q^+_{\rho,\theta\rho^2}(x_o,t_o) \Subset \mathcal{Q}^+$. There exists a positive constant $\nu_o$ depending only on $C_o,C_1,n$ and $q$ such that if
$$
\pm (\mupm - u(x,t_o)) \geq \xi \bomega \quad \text{ a.e.~in } B_\rho(x_o),
$$
then
$$
|\mupm| \geq 8 \xi \bomega,
$$
or we have that
$$
\pm (\mupm - u) \geq \tfrac12 \xi \bomega \quad \text{ a.e. in } B_{\frac{\rho}{2}} (x_o) \times (t_o,t_o+ \nu_o \theta \rho^2).
$$
\end{lemma}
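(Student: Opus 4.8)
The plan is to run a De~Giorgi iteration in the spirit of Lemma~\ref{lem:de-giorgi-deg}, the new feature being that the quantitative information is prescribed on the bottom time slice rather than as a measure condition in the interior of the cylinder. By the symmetry between sub- and supersolutions and between the two sign choices it suffices to treat a supersolution $u$ in the ``$-$'' case: assuming $u(\cdot,t_o)\ge\muminus+\xi\bomega$ a.e.\ in $B_\rho(x_o)$ --- and $|\muminus|<8\xi\bomega$, since otherwise there is nothing to prove --- I would show that $u\ge\muminus+\tfrac12\xi\bomega$ a.e.\ in $B_{\rho/2}(x_o)\times(t_o,t_o+\nu_o\theta\rho^2)$. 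I set
\[
\rho_j=\tfrac{\rho}{2}\bigl(1+2^{-j}\bigr),\qquad k_j=\muminus+\Bigl(\tfrac12+2^{-j-1}\Bigr)\xi\bomega,\qquad Q_j=B_{\rho_j}(x_o)\times\bigl(t_o,\,t_o+\nu_o\theta\rho^2\bigr),
\]
and put $A_j=\{u<k_j\}\cap Q_j$, $Y_j=|A_j|/|Q_j|$; note that $k_j\searrow\muminus+\tfrac12\xi\bomega$ and $k_j\le\muminus+\xi\bomega$. The goal is $Y_j\to0$.

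The first step is the energy estimate. I would apply Lemma~\ref{l.caccioppoli}(2) on the forward cylinder $Q_j$, viewed as a backward cylinder with vertex $(x_o,t_o+\nu_o\theta\rho^2)$ (equivalently, Lemma~\ref{lem:energy-est-initial} when $t_o$ is the initial time of $\mathcal{Q}^+$), with level $k_j$ and a time-independent cutoff $\varphi_j=\varphi_j(x)$ that equals $1$ on $B_{\rho_{j+1}}(x_o)$, vanishes on $\partial B_{\rho_j}(x_o)$, and obeys $|\nabla\varphi_j|\le c\,2^j/\rho$. The crucial point is that $\partial_t\varphi_j\equiv0$ and that the bottom-slice term $\int_{B_{\rho_j}(x_o)\times\{t_o\}}\varphi_j^2\,\mathfrak g_-(u,k_j)\,\d x$ vanishes, because the hypothesis gives $u(\cdot,t_o)\ge\muminus+\xi\bomega\ge k_j$ a.e.\ on $B_\rho(x_o)$, hence $(u(\cdot,t_o)-k_j)_-\equiv0$ there. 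Using also $(u-k_j)_-\le k_j-\muminus\le\xi\bomega$ on $A_j$, one obtains
\[
\esssup_{t_o<t<t_o+\nu_o\theta\rho^2}\int_{B_{\rho_j}(x_o)}\varphi_j^2\,\mathfrak g_-(u,k_j)\,\d x+\iint_{Q_j}\varphi_j^2\,\bigl|\nabla(u-k_j)_-\bigr|^2\,\d x\,\d t\le c\,\frac{2^{2j}}{\rho^2}\,(\xi\bomega)^2\,|A_j|.
\]

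The second step turns this into a recursion. On $A_j$ the bound $|\muminus|<8\xi\bomega$ together with $\muminus\le u<k_j\le\muminus+\xi\bomega$ confines $|u|,|k_j|\le 9\xi\bomega$, so by Lemma~\ref{lem:calg} the density $\mathfrak g_-(u,k_j)$ is comparable to $\theta\,(u-k_j)_-^2$ on the relevant level sets, up to constants depending only on $q$ (this comparison is the delicate point). Inserting this into the parabolic Sobolev inequality~\cite[Proposition~4.1, Chapter~2]{DGV} applied to $(u-k_j)_-\varphi_j$, exactly as in the proof of Lemma~\ref{l.de_giorgi2_scheven2}, every power of $\xi\bomega$ cancels thanks to the intrinsic choice $\theta=(\xi\bomega)^{q-1}$, while the volume $|Q_j|=|B_{\rho}(x_o)|\,\nu_o\theta\rho^2$ produces a factor $\nu_o^{2/n}$; the outcome is
\[
Y_{j+1}\le c\,\nu_o^{2/n}\,b^{\,j}\,Y_j^{1+\frac2n}
\]
with $b=b(n)>1$ and $c=c(n,q,C_o,C_1)>0$. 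Since $Y_0\le1$ trivially, the fast geometric convergence lemma~\cite[Lemma~5.1, Chapter~2]{DGV} forces $Y_j\to0$ once $\nu_o=\nu_o(n,q,C_o,C_1)\in(0,1)$ is chosen small enough, whence $u\ge\muminus+\tfrac12\xi\bomega$ a.e.\ in $B_{\rho/2}(x_o)\times(t_o,t_o+\nu_o\theta\rho^2)$.

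The hard part will be the intrinsic comparison $\mathfrak g_\pm(u,k_j)\simeq\theta\,(u-k_j)_\pm^2$ on the level sets, since this is exactly what dictates the time scaling $\theta=(\xi\bomega)^{q-1}$ and generates the alternative $|\mupm|\ge8\xi\bomega$. In the degenerate range $0<q<1$ it is immediate: the upper confinement $|u|,|k_j|\lesssim\xi\bomega$ already bounds $(|u|+|k_j|)^{q-1}$ from below. In the singular range $q>1$ it has to be handled with more care near $\{u\approx0\}$, as in~\cite[Lemma~3.2]{part2}. Once the comparison is available the remaining work is the standard De~Giorgi machinery, and passing to subsolutions and to the ``$+$'' case amounts to routine sign changes.
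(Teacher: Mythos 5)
The paper itself offers no proof of this lemma: it is simply quoted from \cite[Lemma 3.2]{part2} (specialized to $p=2$), so your attempt is measured against that standard argument rather than against an internal proof. Your strategy is the right one: a forward De Giorgi iteration with time-independent cutoffs, the initial-slice term in the energy estimate annihilated by the pointwise hypothesis $u(\cdot,t_o)\ge \muminus+\xi\bomega\ge k_j$, and the smallness in the recursion supplied by the factor $\nu_o^{2/n}$ coming from $|Q_j|=\nu_o\theta\rho^2|B_{\rho_j}(x_o)|$ rather than by a measure hypothesis. In the degenerate range $0<q<1$ your argument is essentially complete: the complementary assumption $|\muminus|<8\xi\bomega$ confines $|u|,|k_j|\le 9\xi\bomega$ on the level sets, so Lemma~\ref{lem:calg} gives $\mathfrak{g}_-(u,k_j)\ge c(q)\,\theta\,(u-k_j)_-^2$ there, and the recursion $Y_{j+1}\le c\,\nu_o^{2/n}b^{j}Y_j^{1+2/n}$ with $Y_0\le 1$ closes once $\nu_o$ is chosen small in terms of $n,q,C_o,C_1$.

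The gap is the singular range $q>1$, which you flag but do not resolve. The comparison you rely on, $\mathfrak{g}_\pm(u,k_j)\simeq\theta\,(u-k_j)_\pm^2$ ``up to constants depending only on $q$'', is false there: for $q>1$ it would require a lower bound $|u|+|k_j|\gtrsim\xi\bomega$, and nothing prevents both $u$ and $k_j$ from lying near zero (take $\muminus$ close to $-\xi\bomega$, so that $k_0=\muminus+\xi\bomega\approx 0$ and $u$ just below $k_0$); your confinement $|u|,|k_j|\le 9\xi\bomega$ points in the wrong direction, so the displayed recursion is not justified in this case. The fix is not finer analysis of the same inequality but a different mechanism: for $q\ge 1$ use $|u|+|k_j|\ge (u-k_j)_-$ in Lemma~\ref{lem:calg} to get $\mathfrak{g}_-(u,k_j)\ge c(q)(u-k_j)_-^{q+1}$, so the energy estimate controls $\esssup_t\int\varphi_j^2(u-k_j)_-^{q+1}\,\dx$, and then apply the parabolic Sobolev inequality \cite[Proposition 4.1, Chapter 2]{DGV} with time-slice exponent $q+1$ instead of $2$, i.e.
\begin{align*}
\iint_{Q_j} v^{2\frac{n+q+1}{n}}\,\dx\dt
\le c\left(\iint_{Q_j}|\nabla v|^2\,\dx\dt\right)\left(\esssup_t\int_{B_{\rho_j}(x_o)} v^{q+1}\,\dx\right)^{\frac2n},
\qquad v=(u-k_j)_-\varphi_j .
\end{align*}
The powers of $\xi\bomega$ and $\rho$ again cancel, the intrinsic factor $\theta=(\xi\bomega)^{q-1}$ now enters only through $|Q_j|$ and supplies exactly the missing power $(\xi\bomega)^{2(q-1)/n}$, and one arrives at the same recursion $Y_{j+1}\le c\,\nu_o^{2/n}b^{j}Y_j^{1+2/n}$; in this branch the alternative $|\mupm|\ge 8\xi\bomega$ is not even needed. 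With this case distinction ($\theta$-weighted $L^2$ slices for $q<1$, $L^{q+1}$ slices for $q\ge 1$) your proof becomes correct and coincides with the argument of \cite[Lemma 3.2]{part2}.
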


\subsubsection{Tools for the case away from zero}

Next we prove propagation in measure and a De Giorgi type lemma near the initial boundary. In the following, we suppose that 
\begin{equation} \label{eq:initial-levels}
\left\{
\begin{array}{ll}
	\muplus - \frac14 \bomega \geq \sup_{B_\rho(x_o)} g_o &\text{for weak subsolutions,}\\[5pt]
\muminus + \frac14 \bomega \leq \inf_{B_\rho(x_o)} g_o &\text{for weak supersolutions.}
\end{array}
\right.
\end{equation}
Further, we suppose that there either holds
\begin{equation} \label{eq:above-zero-initial-dg}
\muminus > \bomega \quad \text{ and } \quad \tfrac12 \muplus \leq \theta^\frac{1}{q-1} \leq 2 \muplus
\end{equation}
or
\begin{equation} \label{eq:below-zero-initial-dg}
\muplus < -\bomega \quad \text{ and } \quad \tfrac12 |\muminus| \leq \theta^\frac{1}{q-1} \leq 2 |\muminus|.
\end{equation}

First, we state an analogue to Lemma~\ref{lem:away-zero-levels}.

\begin{lemma} \label{lem:away-zero-levels-init}
Let $\eta \in \big(0,\frac18\big]$ and $k_\pm = \mupm \mp \eta \bomega$. If~\eqref{eq:above-zero-initial-dg} holds, then
$$
\tfrac12 \muplus < \muminus \leq \muplus,
$$
$$
\tfrac78 \muplus <k_+ \leq \muplus,\quad \text{and }\quad \muminus \leq k_- < \tfrac98 \muminus.
$$
Furthermore, $\theta^\frac{1}{q-1} \approx k_\pm$ up to a numerical constant.

If~\eqref{eq:below-zero-initial-dg} holds, then
$$
\muminus \leq \muplus < \tfrac12 \muminus,
$$
$$
\tfrac98 \muplus<k_+ \leq \muplus,\quad \text{and }\quad \muminus \leq k_- < \tfrac78 \muminus.
$$
Furthermore, $\theta^\frac{1}{q-1} \approx |k_\pm|$ up to a numerical constant.
\end{lemma}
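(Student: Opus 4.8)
The proof is the elementary inequality-chasing analogue of Lemma~\ref{lem:away-zero-levels}: the only inputs are the standing hypotheses of this subsection, $\muminus \le \muplus$ and $\bomega \ge \muplus - \muminus$, together with the assumptions of the lemma and $\eta \le \tfrac18$. The plan is to split into the two alternatives and, in each, first pin down the relation between $\muminus$ and $\muplus$, then estimate $k_\pm = \mupm \mp \eta\bomega$, and finally compare with $\theta^\frac{1}{q-1}$.

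\emph{Case \eqref{eq:above-zero-initial-dg}.} From $\muminus > \bomega$ and $\bomega \ge \muplus - \muminus$ one gets $2\muminus > \muplus$, hence with $\muminus \le \muplus$ the chain $\tfrac12\muplus < \muminus \le \muplus$, and as a by-product $0 < \bomega < \muminus \le \muplus$. The bounds on $k_\pm$ are then immediate: $k_+ \le \muplus$ is trivial and $k_+ > \muplus - \tfrac18\bomega > \tfrac78\muplus$ because $\bomega < \muplus$; likewise $k_- \ge \muminus$ and $k_- < \muminus + \tfrac18\bomega < \tfrac98\muminus$ because $\bomega < \muminus$. Finally, since $k_+ \in (\tfrac78\muplus,\muplus]$ and $\tfrac12\muplus < \muminus \le k_- < \tfrac98\muminus \le \tfrac98\muplus$, both $k_+$ and $k_-$ are comparable to $\muplus$ up to numerical constants, and the hypothesis $\tfrac12\muplus \le \theta^\frac{1}{q-1} \le 2\muplus$ then gives $\theta^\frac{1}{q-1} \approx k_\pm$.

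\emph{Case \eqref{eq:below-zero-initial-dg}.} Here I would pass to absolute values at once, so that all quantities are positive. From $\muplus < -\bomega$ we have $|\muplus| > \bomega$; combined with $\bomega \ge \muplus - \muminus$ this gives $\muminus \ge \muplus - \bomega > 2\muplus$, i.e.\ $\muplus < \tfrac12\muminus$ and equivalently $|\muminus| < 2|\muplus|$, so with $\muminus \le \muplus$ we obtain the asserted chain and $0 < \bomega < |\muplus| \le |\muminus|$. Then $k_+ = \muplus - \eta\bomega \le \muplus$ and $k_+ > \muplus - \tfrac18|\muplus| = \tfrac98\muplus$, while $k_- = \muminus + \eta\bomega \ge \muminus$ and $k_- < \muminus + \tfrac18|\muminus| = \tfrac78\muminus$. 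Since $|k_+| \in [|\muplus|,\tfrac98|\muplus|)$, $|k_-| \in (\tfrac78|\muminus|,|\muminus|]$ and $|\muplus| \le |\muminus| < 2|\muplus|$, both $|k_\pm|$ are comparable to $|\muminus|$ up to numerical constants, so the hypothesis $\tfrac12|\muminus| \le \theta^\frac{1}{q-1} \le 2|\muminus|$ yields $\theta^\frac{1}{q-1} \approx |k_\pm|$.

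There is no genuine obstacle: the statement contains no analysis, only bookkeeping of elementary inequalities. The single point requiring care is the sign tracking in \eqref{eq:below-zero-initial-dg}, where $\mupm < 0$, so that adding a positive multiple of $\bomega$ to $\muminus$ moves it toward $0$ while subtracting one from $\muplus$ moves it away; keeping everything in absolute values from the outset makes the direction of every estimate transparent.
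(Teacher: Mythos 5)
Your argument is correct: every inequality chain checks out against the standing hypotheses $\muminus\le\inf u\le\sup u\le\muplus$, $\bomega\ge\muplus-\muminus$, $\bomega>0$, and the sign bookkeeping in the case \eqref{eq:below-zero-initial-dg} is handled properly (the only cosmetic slip is writing $k_+>\muplus-\tfrac18\bomega$ instead of $\ge$ when $\eta=\tfrac18$, which is harmless since the following inequality is strict). The paper omits the proof precisely because it is this routine analogue of Lemma~\ref{lem:away-zero-levels}, so your write-up is essentially the intended argument.
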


\begin{lemma} \label{l.propagation-positivity-initial}
Let $u$ be a bounded weak sub(super)solution in a cylinder $Q^+_{\rho,\theta \rho^2}$ such that~\eqref{eq:pme_initial_values} and~\eqref{eq:initial-levels} are satisfied.
Then, for any $\alpha \in (0,1)$ there exists $\nu = \nu (C_o,C_1,q,n, \alpha) \in (0,1)$ such that 
$$
\left| \{ \pm(\mupm - u(\cdot,t)) \leq \tfrac18 \bomega \} \cap B_\rho(x_o) \right| \leq \alpha |B_\rho (x_o)| \quad \text{ for all } t \in (0,\nu \theta \rho^2).
$$
\end{lemma}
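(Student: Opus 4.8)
The plan is to derive the measure estimate from the energy estimate near the initial boundary, Lemma~\ref{lem:energy-est-initial}, combined with two elementary facts: on the relevant super-level sets the density $\mathfrak g_\pm(u,k)$ is comparable to $\theta\,(u-k)_\pm^2$ (by Lemma~\ref{lem:calg} together with the size estimates of Lemma~\ref{lem:away-zero-levels-init}), and condition~\eqref{eq:initial-levels} keeps the initial datum away from the level $\mupm\mp\tfrac18\bomega$ by a fixed amount, which is precisely what makes the level $k$ chosen below admissible in Lemma~\ref{lem:energy-est-initial}; that lemma is applicable because $u$ attains $g_o$ in the sense~\eqref{eq:pme_initial_values}.

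By symmetry it suffices to treat weak supersolutions under assumption~\eqref{eq:above-zero-initial-dg}; the remaining three cases (subsolutions under~\eqref{eq:above-zero-initial-dg}, and super- and subsolutions under~\eqref{eq:below-zero-initial-dg}) are analogous after the obvious sign changes. Here $\pm(\mupm-u)=u-\muminus$, so one must estimate $|\{u(\cdot,t)\le\muminus+\tfrac18\bomega\}\cap B_\rho(x_o)|$. I would set $k:=\muminus+\tfrac14\bomega$; then $k\le\inf_{B_\rho(x_o)}g_o$ by~\eqref{eq:initial-levels}, so Lemma~\ref{lem:energy-est-initial} applies with this level and $(u-k)_\pm=(u-k)_-$. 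Apply it on $Q^+_{\rho,s}(x_o,0)$ with $s:=\nu\theta\rho^2\le\theta\rho^2$ and a time-independent cutoff $\eta$ satisfying $\eta\equiv1$ on $B_{(1-\sigma)\rho}(x_o)$, $\spt\eta\subset B_\rho(x_o)$, $|\nabla\eta|\le c/(\sigma\rho)$, where $\sigma\in(0,1)$ is to be fixed later.

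For the right-hand side of the energy estimate one uses $0\le(u-k)_-\le k-\muminus=\tfrac14\bomega$ (since $u\ge\muminus$) and $|Q^+_{\rho,s}(x_o,0)|=|B_\rho(x_o)|\,s$ to bound it by $c\,\bomega^2\,s\,(\sigma\rho)^{-2}\,|B_\rho(x_o)|$. For the supremum-in-time term on the left-hand side, on $A_t:=\{u(\cdot,t)\le\muminus+\tfrac18\bomega\}\cap B_{(1-\sigma)\rho}(x_o)$ one has $(u-k)_-\ge\tfrac18\bomega$, and since $\muminus\le u\le\tfrac98\muminus$ there while $\muminus\approx\muplus\approx\theta^{1/(q-1)}$ by~\eqref{eq:above-zero-initial-dg} and Lemma~\ref{lem:away-zero-levels-init}, Lemma~\ref{lem:calg} gives $\mathfrak g_-(u,k)\ge c(q)\,\theta\,(u-k)_-^2\ge c(q)\,\theta\,\bomega^2$ on $A_t$. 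Hence the left-hand side dominates $c\,\theta\,\bomega^2\,|A_t|$, and comparing the two bounds, the factors $\bomega^2$, $\rho^2$ and $\theta$ cancel, leaving $|A_t|\le c\,\nu\,\sigma^{-2}\,|B_\rho(x_o)|$ for every $t\in(0,\nu\theta\rho^2)$, with $c=c(C_o,C_1,q,n)$.

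It then remains to absorb the thin shell: using $\{u(\cdot,t)\le\muminus+\tfrac18\bomega\}\cap B_\rho(x_o)\subset A_t\cup\big(B_\rho(x_o)\setminus B_{(1-\sigma)\rho}(x_o)\big)$ and $|B_\rho(x_o)\setminus B_{(1-\sigma)\rho}(x_o)|\le c(n)\,\sigma\,|B_\rho(x_o)|$, I would first choose $\sigma=\sigma(n,\alpha)$ so that $c(n)\,\sigma\le\tfrac{\alpha}{2}$, and then $\nu=\nu(C_o,C_1,q,n,\alpha)\in(0,1)$ so that $c\,\nu\,\sigma^{-2}\le\tfrac{\alpha}{2}$, which yields the claim. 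The one point requiring genuine care is the bookkeeping of powers of $\theta$ and $\bomega$: the factor $\theta$ produced by $\mathfrak g_\pm$ must cancel exactly the $\theta$ hidden in $s=\nu\theta\rho^2$, which is why the comparison $\mathfrak g_\pm(u,k)\approx\theta\,(u-k)_\pm^2$ on the level sets --- resting on $u$ being away from zero, i.e.~on~\eqref{eq:above-zero-initial-dg}/\eqref{eq:below-zero-initial-dg} --- is indispensable; everything else is the familiar two-levels device plus a routine annulus estimate.
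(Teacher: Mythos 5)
Your proposal is correct and follows essentially the same route as the paper's proof: the initial-boundary energy estimate of Lemma~\ref{lem:energy-est-initial} with the level $\mupm \mp \tfrac14\bomega$ (admissible by~\eqref{eq:initial-levels}), the lower bound $\mathfrak g_\pm(u,k)\gtrsim\theta\,\bomega^2$ on the bad set via Lemma~\ref{lem:calg} and the away-from-zero comparability $\theta^{1/(q-1)}\approx|\mupm|$, the annulus estimate $|B_\rho\setminus B_{(1-\sigma)\rho}|\le n\sigma|B_\rho|$, and the choice of $\sigma$ and then $\nu$ in terms of $\alpha$. The only differences are cosmetic (you treat the supersolution case while the paper treats the subsolution case, and you bound $\mathfrak g_\pm$ pointwise on the set rather than via the intermediate level $k_\eps$), so no further comment is needed.
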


\begin{proof}
We only consider the case where $u$ is above zero for subsolutions, i.e.~\eqref{eq:initial-levels}$_1$ and~\eqref{eq:above-zero-initial-dg}. The remaining cases are analogous.

Let $Q = B_\rho \times (0,\nu \theta \rho^2)$ and define $k = \muplus - \tfrac14 \bomega$, $k_\eps = \muplus - \eps \bomega$. Now by choosing $\varphi$ such that $|\nabla \varphi| \leq \frac{1}{\sigma \rho}$ in the energy estimate, Lemma~\ref{lem:energy-est-initial}, for the right hand side we have
$$
\iint_{Q} (u-k)_+^2 |\nabla \varphi|^2 \, \d x \d t \leq \frac{\nu \theta \bomega^2}{(4 \sigma)^2} |B_\rho|. 
$$
For the first term on the left hand side we obtain 
\begin{align*}
\int_{B_\rho \times \{t\}} \varphi^2 \mathfrak{g}_+ (u,k) \, \d x &\geq |A_{k_\eps, (1-\sigma) \rho}(t)| \mathfrak{g}_+ (k_\eps,k) \\
&\geq c(q)|A_{k_\eps, (1-\sigma) \rho}(t)| \left( |k_\eps| + |k| \right)^{q-1} (k_\eps - k)_+^2 \\
&\geq c(q) \left( \frac{\bomega}{8}\right)^2 \theta  |A_{k_\eps, (1-\sigma) \rho}(t)|, 
\end{align*}
since $\muplus \leq |k_\eps| + |k| \leq 2 \muplus$, $\eps \in (0,\frac{1}{8}]$ and by denoting $A_{k,\rho}(t) = \{ u(\cdot,t) \geq k\}\cap B_\rho$. By using the fact that 
$$
|A_{k_\eps,\rho} (t)| \leq |A_{k_\eps, (1-\sigma)\rho} (t)| + n \sigma |B_\rho|
$$
we have
$$
|A_{k_\eps,\rho}(t)| \leq \left( c\frac{\nu}{\sigma^2} + n \sigma \right) |B_\rho|
$$
for $c = c(C_o,C_1,q) > 0$. By choosing $\sigma = \alpha (c+n)^{-1}$ and $\nu = \sigma^3$, the claim follows.
\end{proof}

\begin{lemma} \label{lem:de-giorgi-initial}
Let $Q^+_{\rho,\nu \theta \rho^2}(x_o,0)$ be a forward parabolic cylinder with $\nu \in (0,1)$, vertex in $\Omega \times \{0\}$ and $B_\rho(x_o) \Subset \Omega$. Let $u$ be a bounded weak sub(super)solution to \eqref{eq:pde}$_1$ in $\Omega_T$ such that~\eqref{eq:pme_initial_values} and~\eqref{eq:initial-levels} are satisfied. Furthermore, suppose that either~\eqref{eq:above-zero-initial-dg} or~\eqref{eq:below-zero-initial-dg} holds. Then there exists $\nu_1 = \nu_1(n,q,C_o,C_1) \in (0,1)$ such that if
$$
\left| \left\{ \pm(\mupm - u) \leq \tfrac18 \bomega \right\} \cap Q^+_{\rho,\nu \theta \rho^2} \right| \leq \nu_1 |Q^+_{\rho,\nu \theta \rho^2}|,
$$
then 
$$
\pm(\mupm -u) \geq \tfrac{1}{16} \bomega\quad \text{ a.e. in } Q^+_{\frac{\rho}{2},\nu \theta \left(\frac{\rho}{2}\right)^2}.
$$
\end{lemma}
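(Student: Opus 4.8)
The plan is to run a De~Giorgi iteration up to the initial boundary, parallel to the proof of Lemma~\ref{l.de_giorgi2_scheven2}, the one structural change being that the local Caccioppoli inequality of Lemma~\ref{l.caccioppoli} is replaced by the initial-boundary energy estimate of Lemma~\ref{lem:energy-est-initial} — which, crucially, carries no time-derivative term because its cutoff is time-independent. As in the quoted lemma it suffices to treat one of the four cases, and I would take $u$ a weak supersolution with \eqref{eq:above-zero-initial-dg} in force, so that $\mupm=\muminus$, $u$ is positive and bounded away from zero near $\muminus$; the remaining three cases (subsolutions under \eqref{eq:above-zero-initial-dg}, and sub/supersolutions under \eqref{eq:below-zero-initial-dg}) are analogous, using the ``$+$'' parts of Lemmas~\ref{lem:calg} and~\ref{lem:away-zero-levels-init}. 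Dropping the vertex $(x_o,0)$ from the notation, I would introduce
\[
k_j=\muminus+\bigl(\tfrac{1}{16}+\tfrac{1}{2^{j+4}}\bigr)\bomega,\qquad
\rho_j=\tfrac{\rho}{2}+\tfrac{\rho}{2^{j+1}},\qquad
Q_j=Q^+_{\rho_j,\,\nu\theta\rho_j^2},\qquad j\in\N_0,
\]
so that $k_0=\muminus+\tfrac18\bomega$, $k_j\downarrow\muminus+\tfrac1{16}\bomega$, $k_j-k_{j+1}=2^{-(j+5)}\bomega$, $Q_0=Q^+_{\rho,\nu\theta\rho^2}$ and $Q_j\supseteq Q^+_{\rho/2,\,\nu\theta(\rho/2)^2}$ for every $j$. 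Since $\muminus>\bomega>0$ all $k_j$ are positive, and since $k_j<\muminus+\tfrac14\bomega\le\inf_{B_\rho}g_o$ by \eqref{eq:initial-levels}, each $k_j$ is an admissible level in Lemma~\ref{lem:energy-est-initial}. I would take time-independent cutoffs $\varphi_j$ with $\varphi_j\equiv1$ on $B_{\rho_{j+1}}$, $\varphi_j=0$ on $\partial B_{\rho_j}$, $|\nabla\varphi_j|\le c\,2^j/\rho$, and set $A_j=\{u<k_j\}\cap Q_j$, $Y_j=|A_j|/|Q_j|$.

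The first step is the energy inequality on $Q_j$. On $A_j$ one has $0<\muminus\le u<k_j$, and by Lemma~\ref{lem:away-zero-levels-init} the quantities $\muminus$, $k_j$ and $\theta^{1/(q-1)}$ are mutually comparable up to numerical constants; hence $(|u|+|k_j|)^{q-1}\approx\theta$ there, so Lemma~\ref{lem:calg} yields $\mathfrak{g}_-(u,k_j)\approx\theta\,(u-k_j)_-^2$ on $A_j$. Inserting this together with the crude bound $(u-k_j)_-\le k_j-\muminus\le\tfrac18\bomega$ into Lemma~\ref{lem:energy-est-initial} with $\eta=\varphi_j$ gives
\[
\theta\sup_{0<t<\nu\theta\rho_j^2}\int_{B_{\rho_j}}\varphi_j^2(u-k_j)_-^2\,\dx
\;+\;\iint_{Q_j}\bigl|\nabla\bigl(\varphi_j(u-k_j)_-\bigr)\bigr|^2\,\dx\dt
\;\le\;\frac{c\,2^{2j}\bomega^2}{\rho^2}\,|A_j|
\]
with $c=c(C_o,C_1,n,q)$; in particular the supremum term alone is bounded by $c\,2^{2j}\bomega^2\rho^{-2}\theta^{-1}|A_j|$.

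The second step is the standard measure-decay estimate. Since $\varphi_j\equiv1$ on $Q_{j+1}$ and $k_j-u\ge k_j-k_{j+1}$ on $A_{j+1}$, while $\varphi_j(u-k_j)_-$ is supported in $A_j$, Hölder's inequality together with the parabolic Sobolev inequality \cite[Proposition 4.1, Chapter 2]{DGV} and the two bounds above give
\[
(k_j-k_{j+1})|A_{j+1}|\;\le\;\iint_{Q_j}\varphi_j(u-k_j)_-\,\dx\dt\;\le\;\frac{c\,2^{j}\bomega}{\theta^{1/(n+2)}\rho}\,|A_j|^{1+\frac{1}{n+2}}.
\]
Dividing by $k_j-k_{j+1}=2^{-(j+5)}\bomega$ and by $|Q_{j+1}|$, and using that $|Q_j|$ and $|Q_{j+1}|$ are both comparable to $\nu\theta\rho^{n+2}$ up to dimensional constants (so that $|Q_j|/|Q_{j+1}|$ stays bounded), the powers of $\theta$ and $\rho$ cancel and only a harmless factor $\nu^{1/(n+2)}\le1$ survives, whence
\[
Y_{j+1}\;\le\;c\,2^{2j}\,Y_j^{1+\frac{1}{n+2}}
\]
with $c=c(C_o,C_1,n,q)$ independent of $\nu$. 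By the fast geometric convergence lemma \cite[Lemma 5.1, Chapter 2]{DGV}, the choice $\nu_1=c^{-(n+2)}4^{-(n+2)^2}$ forces $Y_j\to0$ as soon as $Y_0\le\nu_1$. But $A_0=\{u<\muminus+\tfrac18\bomega\}\cap Q^+_{\rho,\nu\theta\rho^2}$ coincides up to a null set with $\{u-\muminus\le\tfrac18\bomega\}\cap Q^+_{\rho,\nu\theta\rho^2}$, so the hypothesis is precisely $Y_0\le\nu_1$. Letting $j\to\infty$ and using $\{u<\muminus+\tfrac1{16}\bomega\}\cap Q^+_{\rho/2,\nu\theta(\rho/2)^2}\subset A_j$ for every $j$, one concludes $u\ge\muminus+\tfrac1{16}\bomega$ a.e.\ in $Q^+_{\rho/2,\nu\theta(\rho/2)^2}$, which is the claim in the chosen case.

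Beyond routine bookkeeping, the point requiring care is to make sure $\nu_1$ does not depend on the parameter $\nu$ that fixes the height of the cylinder; this is exactly where one uses that $|Q_j|/|Q_{j+1}|$ stays bounded and that the powers of $\nu$ produced by the Sobolev inequality have the favourable sign. The other ingredient keeping all constants under control is Lemma~\ref{lem:away-zero-levels-init}, which pins $(|u|+|k_j|)^{q-1}$ to $\theta$ on every $A_j$ and thereby makes the iteration effectively non-intrinsic; without the away-from-zero assumptions \eqref{eq:above-zero-initial-dg}/\eqref{eq:below-zero-initial-dg} this comparison would fail and a more delicate intrinsic analysis would be required (which is precisely why the near-zero situation is handled separately in Lemma~\ref{l.de-giorgi-initial}).
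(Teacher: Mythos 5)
Your proposal is correct and follows essentially the same route as the paper: the initial-boundary energy estimate of Lemma~\ref{lem:energy-est-initial} (with time-independent cutoffs), the comparability of levels to $\theta^{1/(q-1)}$ via Lemmas~\ref{lem:calg} and~\ref{lem:away-zero-levels-init}, the same choice of shrinking levels, radii and cylinders, and the same De Giorgi iteration $Y_{j+1}\le c\,2^{2j}Y_j^{1+1/(n+2)}$ (with the favourable $\nu^{1/(n+2)}\le 1$ factor making $\nu_1$ independent of $\nu$) concluded by fast geometric convergence. The only difference is that you treat the supersolution representative case while the paper treats the subsolution one; both note the remaining cases are analogous.
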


\begin{proof}
We prove the result for subsolutions in case~\eqref{eq:above-zero-initial-dg}, i.e.~we assume that $u$ is above zero. 
The remaining cases are analogous.
We define
$$
	\begin{array}{c}
		k_j = \boldsymbol{\mu}^+ - \Big( \frac{1}{16} + \frac{1}{2^{j+4}} \Big) \boldsymbol{\omega},\,
		\rho_j = \frac{\rho}{2} + \frac{\rho}{2^{j+1}},\,
		B_j = B_{\rho_j},\,
		Q_j := Q^+_{\rho_j, \nu \theta \rho_j^2}.
	\end{array}
$$
Observe that clearly $k_j > 0$ if for all $j\in \N_0$ and~\eqref{eq:above-zero-initial-dg}$_1$ holds for all such $k_j$. Let $\varphi = \varphi(x)$ be a cutoff function such that $0\leq \varphi \leq 1$, $\varphi$ vanishes on $\partial B_j$ and takes the value $\varphi \equiv 1$ in $B_{j+1}$ and its gradient satisfies $|\nabla \varphi| \leq c \frac{2^j}{\rho}$ with a numerical constant $c$.

Since $\muplus \geq u > k_j > 0$ in the set $A_{j} = \{u > k_j\} \cap Q_j$, from the energy estimate, Lemma~\ref{lem:energy-est-initial}, we obtain that
\begin{align*}
	\min&\left\{ \left( \boldsymbol{\mu}^+ \right)^{q-1}, k_j^{q-1} \right\}
	\esssup_{0<t<\nu \theta \rho_j^2} \int_{{B}_j} \varphi^2( u-{k}_j)_+^2\, \d x 
	+ \iint_{{Q}_j} \varphi^2| \nabla (u-{k}_j)_+ |^2 \, \d x \d t \\
	&\leq
	\esssup_{0<t<\nu \theta \rho_j^2} \int_{{B}_j} \varphi^2 \left( |u| + |k_j| \right)^{q-1} ( u-{k}_j)_+^2\, \d x 
	+ \iint_{{Q}_j} \varphi^2 | \nabla (u-k_j)_+ |^2 \, \d x \d t \\
	&\leq
	c \iint_{Q_j} (u-k_j)_+^2 |\nabla \varphi|^2 \, \d x \d t \\
	&\leq
	c \frac{2^{2j}}{\rho^2} \iint_{Q_j} (u-k_j)_+^2 \, \d x \d t \\
	&\leq 
	c \frac{2^{2j} \left( \tfrac18 \boldsymbol{\omega}\right)^2}{\rho^2} \left| A_j\right|,
\end{align*}
with a constant $c = c(C_o,C_1,q)>0$. Observe that by Lemma~\ref{lem:away-zero-levels-init} we have that 
$$
\esssup_{0<t<\nu \theta \rho_j^2} \int_{{B}_j} \varphi^2( u-{k}_j)_+^2\, \d x \leq c \frac{2^{2j} \big(\tfrac18 \bomega\big)^2}{\theta \rho^2} |A_j|.
$$
Since $u - k_j \geq k_{j+1} -  k_{j} = 2^{-(j+5)} \boldsymbol{\omega}$ in the set $\{u \geq  k_{j+1}\}$, by H\"older and Sobolev inequality~\cite[Proposition 4.1, Chapter 2]{DGV} we obtain that
\begin{align*}
	\frac{\boldsymbol{\omega}}{2^{j+5}} &\left| A_{j+1} \right|
	\leq
	\iint_{{Q}_j}  (u-  k_j )_+ \varphi \, \d x \d t \\
	&\leq
	\left( \iint_{ Q_j} [ (u- k_j)_+ \varphi ]^\frac{2(n+2)}{n} \, \d x \d t \right)^\frac{n}{2(n+2)} \left| A_j \right|^{1- \frac{n}{2(n+2)}} \\
	&\leq
	c \left( \iint_{ Q_j} | \nabla [ (u- k_j)_+ \varphi ] |^2 \, \d x \d t \right)^\frac{n}{2(n+2)}
	\left( \esssup_{0 < t < \nu \theta  \rho_j^2} \int_{ B_j} \varphi^2(u- k_j)_+^2 \, \d x \right)^\frac{1}{n+2} \\
	&\phantom{=}
	\cdot \left| A_j \right|^{1- \frac{n}{2(n+2)}} \\
	&\leq
	c \left( \frac{2^{2j} \big(\tfrac18 \boldsymbol{\omega}\big)^2 }{\rho^2} \right)^\frac{n}{2(n+2)} \left( \frac{2^{2j} \big(\tfrac18 \boldsymbol{\omega}\big)^2}{\theta \rho^2} \right)^\frac{1}{n+2} \left| A_j \right|^{1+\frac{1}{n+2}} \\
	&=
	c \frac{2^{j-3} \boldsymbol{\omega}}{\theta^\frac{1}{n+2} \rho} \left| A_j \right|^{1+ \frac{1}{n+2}}
\end{align*}
for a constant $c = c(C_o,C_1,n,q)> 0$. Dividing by $|Q_{j+1}|$, denoting $Y_j = |A_j| / |Q_j|$ and using that $\nu \in (0,1)$, we have that
$$
	Y_{j+1}
	\leq
	c \nu^\frac{1}{n+2} 2^{2j} Y_{j}^{1+ \frac{1}{n+2}}
	\leq
	c 2^{2j} Y_{j}^{1+ \frac{1}{n+2}}
$$
for a constant $c = c(C_o,C_1,n,q)$. When $\nu_1 \leq c^{-(n+2)} 4^{-(n+2)^2}$,
we conclude the proof by using fast geometric convergence lemma~\cite[Lemma 5.1, Chapter 2]{DGV}.
\end{proof}

\section{Continuity up to the lateral boundary} \label{sec:continuity-lateral}
Let $(x_o,t_o)\in S_T$, $\tilde \rho_o\in (0,\min \{1,t_o,\tilde \rho\})$, where $\tilde \rho$ denotes the radius $\rho_o$ in the geometric density condition~\eqref{geometry}, and $Q := Q_{\tilde \rho_o, \tilde \rho_o}(x_o,t_o)$. Define 
$$
	\muplus_o := \sup_{Q \cap \Omega_T} u
	\quad \text{and} \quad
	\muminus_o := \inf_{Q \cap \Omega_T} u.
$$
If $0<q<1$, let
\begin{equation} \label{e.initial_omega}
\bomega_o = \max \left\{ \tfrac{2}{\xi} \|u\|_\infty, A, 4 \osc_{Q\cap \Omega_T} \psi, 4 \osc_{Q \cap S_T} g \right\},
\end{equation}
where $A > 1$ and $\xi \in (0,1)$ are constants determined by $C_o,C_1,n,q$ and $\alpha_*$ later on. 

If $q > 1$, we use the rescaling argument in Appendix~\ref{appendix-a} with $M =  \tfrac{2}{\xi} \|u\|_\infty$.  We let $\tilde \rho_o > 0$ be so small that $4 \max\{\osc_{Q \cap \Omega_T} \psi, \osc_{Q\cap S_T} g\} \leq 1$. This is possible since $\psi$ is uniformly continuous in $\overline{\Omega}_T$ and $g$ in $\overline{Q} \cap S_T$. Now we let
$$
\bomega_o = 1.
$$

In the whole range $0<q<\infty$ we let $\theta_o = \bomega_o^{q-1}$ and $\rho_o = \frac{ \tilde \rho_o}{32}$ and define cylinders
$$
Q_o := Q_{16 \rho_o, A^{(1-q)_+}\theta_o (16 \rho_o)^2}(x_o,t_o) \quad \text{ and } \quad  Q_o' :=
	Q_{\frac{\tilde \varrho_o}{2}, \frac{\tilde \rho_o}{2}}(x_o,t_o).
$$ 
It follows that 
$$
	Q_o 
	\subset
	Q'_o
	\subset 
	Q.
$$

In this section, we say that $u$ is near zero if
\begin{equation}
	\muplus_o \geq - \xi \bomega_o
	\quad \text{and} \quad
	\muminus_o \leq  \xi \bomega_o,
	\label{eq:near zero}
\end{equation}
and that $u$ is away from zero if
\begin{equation}
	\muplus_o < -  \xi \bomega_o
	\quad \text{or} \quad
	\muminus_o >   \xi \bomega_o.
	\label{eq:away from zero}
\end{equation} 

\subsection{Reduction in oscillation near zero}

Observe that with our choice of $\bomega_o$ we start in case~\eqref{eq:near zero} where $u$ is near zero. In particular, we have that $\big| \mupm_o \big| < 2 \bomega_o$. First, we suppose that
\begin{equation} \label{e.omega-alternative}
\muplus_o - \muminus_o \geq \tfrac12 \bomega_o.
\end{equation}
Together with \eqref{e.initial_omega}, we conclude that
$$
\max\left\{ \osc_{Q_o' \cap \Omega_T} \psi,\, \osc_{Q_o' \cap S_T} g \right \} = \tfrac{1}{4} \bomega_o
$$ 
or one of the following cases must hold:
\begin{equation}
	\muplus_o - \tfrac18 \bomega_o > \max \left\{ \sup_{Q_o' \cap \Omega_T} \psi,\, \sup_{Q_o' \cap S_T} g \right\}
	\quad \text{or} \quad
	\muminus_o + \tfrac18 \bomega_o < \inf_{Q_o' \cap S_T} g.
	\label{eq:comparison g}
\end{equation}
At this stage, assume that the first case of \eqref{eq:comparison g} holds and set $k := \muplus_o - \tfrac18 \bomega_o$. Now $u_k := \max\{u, k \}$ is a local weak subsolution to the obstacle free porous medium equation \eqref{eq:pde}$_1$ in $Q_o \cap \Omega_T$ by Lemma~\ref{lem:weak_subsol} (2). Note that by \eqref{eq:comparison g}$_1$ and Lemma~\ref{lem:super_sub_extension} (2),
$u_k$ can be extended from $Q_o \cap \Omega_T$ to $Q_o$ by $k$ such that
the resulting function is a weak subsolution to the porous medium equation
in $Q_o$. Thus, we can work with $Q_o$ as an interior cylinder.
In the following, we omit $(x_o,t_o)$ to simplify our notation.

Observe that the definitions of $k$ and $u_k$ and
the positive geometric density condition \eqref{geometry} imply 
\begin{align} \label{eq:measure-density-uk}
	\Big| &\Big\{ \muplus_o - u_k(\cdot,s) \geq \tfrac18 \bomega_o \Big\}
	\cap B_{ \varrho_o} \Big|  \\
	&=
	\Big| \Big\{ \muplus_o - u_k(\cdot,s) = \tfrac18 \bomega_o \Big\}
	\cap B_{ \varrho_o} \Big| 
	=
	| \{ u_k(\cdot,s) = k \} \cap B_{ \varrho_o} |
	\geq
	| B_{ \varrho_o} \setminus \Omega |
	\geq
	\alpha_* |B_{ \varrho_o}| \nonumber
\end{align}
for any $s \in \big( -A^{(1-q)_+} \theta_o  \varrho_o^2, 0 \big)$.

Let $0<q<1$, and $j_* \geq 2$ in Lemma~\ref{lem:shrinking-deg} be so large that Lemma~\ref{lem:de-giorgi-deg} is applicable, and fix $A = 2^{j_*+3}$ and $\xi = 2^{-j_*}$. Then, the aforementioned lemmas together with~\eqref{eq:measure-density-uk} yield that
$$
|\muplus_o| > \xi \bomega_o
$$
or
$$
	\muplus_o - u_k \geq \frac{1}{2^{j_*+4}} \bomega_o
	\quad \text{ a.e. in }
	Q_{\frac{{ \varrho_o}}{2}, (\xi \bomega_o)^{q-1} \left(\frac{{\varrho_o}}{2}\right)^2}
	\supset
	Q_{\frac{{ \varrho_o}}{2}, \theta_o \left(\frac{{\varrho_o}}{2}\right)^2}.
$$
If $|\muplus_o| > \xi \bomega_o$ holds, by assumption~\eqref{eq:near zero} we have that $\xi \bomega_o < \muplus_o \leq 2 \bomega_o$. In this case we use Lemma~\ref{lem:mu trapped} with $a=\tfrac12 \xi$ to conclude from~\eqref{eq:measure-density-uk} that 
$$
\muplus_o - u_k \geq \eta_1 \bomega_o \quad \text{ a.e. in } Q_{\frac{{ \varrho_o}}{2}, \theta_o \left(\frac{{\varrho_o}}{2}\right)^2}
$$
for some $\eta_1 = \eta_1(C_o,C_1,n,q,\alpha_*)> 0$.
Here, we have used that $b \in (0,1)$ in Lemma~\ref{lem:mu trapped} and $A>1$.

Let $\eta := \min\{2^{-(j_* + 4)}, \eta_1\}$ and $\delta = 1- \eta$. By similar considerations for $u - \muminus_o$ if~\eqref{eq:comparison g}$_2$ holds, we deduce
$$
\osc_{Q_{\frac{{ \varrho_o}}{2}, \theta_o \left(\frac{{\varrho_o}}{2}\right)^2} \cap \Omega_T} u \leq \delta \bomega_o.
$$
By taking the case that both conditions in~\eqref{eq:comparison g} are violated or~\eqref{e.omega-alternative} does not hold into account, we obtain that
$$
\osc_{Q_{\frac{{ \varrho_o}}{2}, \theta_o \left(\frac{{\varrho_o}}{2}\right)^2} \cap \Omega_T} u \leq \max \left\{ \delta \bomega_o, 4 \osc_{Q_o' \cap \Omega_T} \psi, 4\osc_{Q_o' \cap S_T} g \right\}.
$$

In the case $q > 1$, by taking into account Remark~\ref{rem:expansion}, we assume that $1-\epsilon \leq \frac{3}{4}$ in Proposition~\ref{prop:expansion-of-positivity-q} and let $\xi$ denote the according constant.
Thus, by means of Proposition~\ref{prop:expansion-of-positivity-q} from \eqref{eq:measure-density-uk} we deduce that
$$
|\muplus_o| > \xi \bomega_o
$$
or there exists $\eta_1$ depending only on $C_o$, $C_1$, $n$, $q$ and $\alpha_\ast$ such that
$$
	\muplus_o - u_k(\cdot,t) \geq \tfrac{1}{8}\eta_1 \bomega_o \quad \text{ a.e. in } Q_{\frac{{ \varrho_o}}{2}, \theta_o \left(\frac{{\varrho_o}}{2}\right)^2}.
$$
Again, by assumption~\eqref{eq:near zero}, $|\muplus_o| > \xi \bomega_o$ can only be satisfied if $\xi \bomega_o < \muplus_o \leq 2\bomega_o$.
In the case, we use Lemma~\ref{lem:mu trapped} with $a = \frac{1}{2}\xi$ and recall that $b \in (0,1)$ to conclude that
$$
	\muplus_o - u_k \geq \eta_2 \bomega_o \quad \text{ a.e. in }
	Q_{2\rho_o, \left(1-\tfrac12 b\right)\theta_o \varrho_o^2} \supset
	Q_{\frac{{ \varrho_o}}{2}, \theta_o \left(\frac{{\varrho_o}}{2}\right)^2}.
$$
By choosing $\eta = \min \left\{\tfrac{1}{8}\eta_1,\eta_2\right\}$ and $\delta = 1 - \eta$ we obtain that
$$
\osc_{Q_{\frac{{ \varrho_o}}{2}, \theta_o \left(\frac{{\varrho_o}}{2}\right)^2} \cap \Omega_T} u \leq \delta \bomega_o.
$$
By similar arguments for~\eqref{eq:comparison g}$_2$ and by taking the case that both conditions in~\eqref{eq:comparison g} are violated or~\eqref{e.omega-alternative} does not hold into account, we find that
$$
\osc_{Q_{\frac{{ \varrho_o}}{2}, \theta_o \left(\frac{{\varrho_o}}{2}\right)^2} \cap \Omega_T} u \leq \max \left\{ \delta \bomega_o, 4 \osc_{Q_o' \cap \Omega_T} \psi, 4\osc_{Q_o' \cap S_T} g \right\}.
$$
Now, consider the whole range $0<q<\infty$.
For the corresponding parameters $\eta$ and $\delta$ in the degenerate and singular cases, respectively, let 
$$
\bomega_1 := \max \left\{ \delta \bomega_o, 4 \osc_{Q_o' \cap \Omega_T} \psi, 4\osc_{Q_o' \cap S_T} g \right\} \quad \text{ and } \quad \theta_1 := \bomega_1^{q-1}.
$$
Furthermore, denote $Q'_1 := Q_{\frac{{ \varrho_o}}{2}, \theta_o \left(\frac{{\varrho_o}}{2}\right)^2}$, and let 
$$
Q_1 := Q_{16\rho_1, A^{(1-q)_+}\theta_1 (16\rho_1)^2}
	\quad \text{where }
	\varrho_1 := \lambda  \varrho_o
	\text{ and }
	\lambda :=
	\tfrac{1}{32} \Big( \tfrac{\delta}{A} \Big)^\frac{(1-q)_+}{2}.
$$
Since
$A^{1-q} \theta_1 ( 16\lambda  \varrho_o)^2
\leq 16^2A^{1-q} \delta^{q-1} \theta_o \lambda^2   \varrho_o^2
= \frac{1}{4} \theta_o   \varrho_o^2$
if $0<q<1$, and $\theta_1 \leq \theta_o$ if $q >1$, we find that 
$$
Q_1 \subset Q_1' \subset Q_o.
$$
Thus
$$
\osc_{Q_1 \cap \Omega_T} u \leq \bomega_1.
$$ 
Define
$$
	\muminus_1 := \inf_{Q_1 \cap \Omega_T} u
	\quad \text{and} \quad
	\muplus_1 := \muminus_1 + \bomega_1
	\geq
	\inf_{Q_1 \cap \Omega_T} u + \osc_{Q_1 \cap \Omega_T} u
	=
	\sup_{Q_1 \cap \Omega_T} u.
$$
If~\eqref{eq:near zero} holds, we may again use alternatives analogous to~\eqref{eq:comparison g} in the cylinder $Q'_1$, and proceed iteratively. In this way we can build a sequence with indices $i = 1,...,j-1$ in which~\eqref{eq:near zero} holds true up to some $j \in \N$. For each $i = 1,2,...,j$, we define
\begin{equation*}
	\begin{array}{c}
		\rho_i := \lambda \rho_{i-1},
		\quad
		\boldsymbol{\omega}_i :=
		\max \left\{ \delta \boldsymbol{\omega}_{i-1}, 4 \osc_{Q'_{i-1} \cap \Omega_T} \psi, 4 \osc_{Q'_{i-1} \cap S_T}g \right\},
		\quad
		\theta_i := \boldsymbol{\omega}_i^{q-1}, \\[5pt]
		\quad
		\lambda := \tfrac{1}{32} \left(\frac{\delta}{A}\right)^\frac{(1-q)_+}{2},
		\quad
		Q_i := Q_{16\rho_i, A^{(1-q)_+}\theta_i (16\rho_i)^2},
		\quad Q'_i = Q_{\frac{1}{2}\rho_{i-1}, \theta_{i-1} \left(\frac{1}{2}\rho_{i-1}\right)^2}, \\[5pt]
		\boldsymbol{\mu}_i^- := \inf_{Q_i} u
		\quad \text{ and } \quad
		\boldsymbol{\mu}_i^+ := \boldsymbol{\mu}_i^- + \boldsymbol{\omega}_i
	\end{array}
\end{equation*}
and deduce that for each $i = 1,2,...,j$, we have that $Q_i \subset Q_{i}'$ and that
$$
	\osc_{Q_i \cap \Omega_T} u \leq \bomega_i.
$$

\subsection{Reduction in oscillation above zero}
\label{sec:above_zero}

Suppose that $j$ is the first index for which \eqref{eq:away from zero} holds.
In this section, we assume that we are in the case \eqref{eq:away from zero}$_2$,
whereas the case \eqref{eq:away from zero}$_1$ will be treated in the next section.
Observe that $\muminus_j = \inf_{Q_j\cap \Omega_T} u$, $\muplus_j = \muminus_j + \bomega_j$,
$$
\bomega_j = \max\left\{ \delta \bomega_{j-1}, 4 \osc_{Q'_{j-1}\cap \Omega_T} \psi, 4 \osc_{Q'_{j-1} \cap S_T }g \right\}
$$ 
and $\theta_j =  \bomega_j^{q-1}$. 
Condition~\eqref{eq:away from zero}$_2$ implies that 
\begin{equation} \label{eq:mu_equiv_az}
\tfrac{\xi}{\xi+1}\muplus_j < \muminus_j \leq \muplus_j.
\end{equation}
Now, define $\theta_*=\left(\muplus_j\right)^{q-1}$.
If $q < 1$, we immediately have that $\theta_* \leq \bomega_j^{q-1}$. On the other hand, since $u$ is near zero for $j-1$, we have that
$$
 \xi \bomega_j < \muminus_j = \inf_{Q_j \cap \Omega_T}u \leq \sup_{Q_{j-1}\cap \Omega_T} u\leq \muplus_{j-1} \leq 2\bomega_{j-1} \leq \tfrac{2}{\delta} \bomega_j.	
$$
By~\eqref{eq:mu_equiv_az} we conclude that
$$
\muplus_j < \tfrac{\xi+1}{\xi} \muminus_j \leq \tfrac{2(\xi+1)}{\delta \xi} \bomega_j,
$$
such that $\theta_* < \left( \tfrac{2(\xi+1)}{\delta \xi} \right)^{q-1} \theta_j$ if $q > 1$. Denote 
\begin{equation} \label{eq:hatQj}
\hat \rho_j = \left( \tfrac{\delta \xi}{2(\xi + 1)} \right)^\frac{(q-1)_+}{2} \rho_j\quad \text{ and }\quad \widehat Q_j := Q_{\hat \rho_j, \theta_* \hat \rho_j^2}
\end{equation}
such that $\widehat Q_j \subset Q_j$.

At least one of the following must hold: $\max \{ \osc_{\widehat Q_j \cap \Omega_T} \psi, \osc_{\widehat Q_j \cap S_T} g \} = \tfrac{1}{4} \bomega_j$ or
\begin{equation}
	\muplus_j - \tfrac38 \bomega_j > \max \left\{ \sup_{\widehat Q_j \cap \Omega_T} \psi, \sup_{\widehat Q_j \cap S_T} g \right\}
	\quad \text{or} \quad
	\muminus_j + \tfrac38 \bomega_j < \inf_{\widehat Q_j \cap S_T} g.
	\label{eq:comparison g2}
\end{equation}
Suppose that~\eqref{eq:comparison g2}$_1$ holds. By setting $k := \muplus_j - \tfrac38 \bomega_j$, we have that $u_k = \max\{u,k\}$ is a weak subsolution to \eqref{eq:pde}$_1$ in $\widehat Q_j$. 
Then \eqref{geometry} implies that
$$
\left| \left\{ \muplus_j - u_k(\cdot,s) \geq  \tfrac38 \bomega_j \right\} \cap B_{\hat \rho_j} \right| \geq \alpha_* |B_{\hat \rho_j}|
$$
for all $s \in (- \theta_* \hat \rho_j^2,0)$. 
Applying Lemma~\ref{l.supersub-slicewise-to-pointwise}, we conclude that 
$$
\muplus_j- u_k \geq  a \bomega_j\quad \text{ a.e. in } Q_{\frac{\hat\rho_j}{2}, \frac12 \theta_* \left( \frac{\hat \rho_j}{2} \right)^2} \cap \Omega_T.
$$

If~\eqref{eq:comparison g2}$_2$ holds true, then we have that $u_k = \min \{u, k\}$ with $k = \muminus_j + \tfrac38 \bomega_j$ is a weak supersolution to \eqref{eq:pde}$_1$ in $\widehat Q_j$. Again, we have that
$$
\left| \left\{ u_k(\cdot,s) - \muminus_j \geq \tfrac38 \bomega_j \right\} \cap B_{\hat \rho_j} \right| \geq \alpha_* |B_{\hat \rho_j}|.
$$
In this case, using Lemma~\ref{l.supersub-slicewise-to-pointwise} shows that 
$$
u_k - \muminus_j \geq a \bomega_j \quad \text{ a.e. in } Q_{\frac{\hat \rho_j}{2}, \frac12 \theta_* \left( \frac{\hat \rho}{2} \right)^2} \cap \Omega_T.
$$
Now choose $\hat \rho_{j+1} = \hat \lambda \hat \rho_j$ with $\hat \lambda = \sqrt{\tfrac18}$. Then 
$$
	Q_{\hat \rho_{j+1}, \theta_* \hat \rho_{j+1}^2} =: Q_{j+1} \subset Q_{\frac{\hat\rho_j}{2}, \frac12 \theta_* \left( \frac{\hat\rho_j}{2} \right)^2} \subset Q_j 
$$
and
$$
\osc_{Q_{j+1}\cap \Omega_T} u \leq \delta \bomega_j.
$$
On the other hand, if \eqref{eq:comparison g2} fails, we have that
$$
\osc_{Q_{j+1} \cap \Omega_T} u \leq \bomega_j = 4 \max\left\{ \osc_{\widehat Q_j \cap \Omega_T} \psi, \osc_{\widehat Q_j \cap S_T}g \right\},
$$
so that altogether
$$
\osc_{Q_{j+1} \cap \Omega_T} u  \leq \max \left\{ \delta \bomega_j , 4 \osc_{Q_j' \cap \Omega_T} \psi, 4 \osc_{Q_j' \cap S_T}g \right\} =: \bomega_{j+1}.
$$
Again, define $\muminus_{j+1} = \inf_{Q_{j+1} \cap \Omega_T} u$ and $\muplus_{j+1} = \muminus_{j+1} + \bomega_{j+1}$.
Observe that since $\bomega_{j+1} \leq \bomega_j$ we have that
$$
 \xi \bomega_{j+1} \leq  \xi \bomega_j < \muminus_j \leq \muminus_{j+1},
$$
which implies that we are again in case~\eqref{eq:away from zero}$_2$ for $j+1$, and also that
$$
\tfrac12 \muplus_{j+1} \leq \theta_*^\frac{1}{q-1} \leq \tfrac{\xi+1}{\xi} \muplus_{j+1}.
$$
Now, we can iterate with the choices
$$
Q_i:= Q_{\hat \rho_i,\theta_* \hat \rho_i^2} \quad \text{ with } \hat \rho_i = \hat\lambda^{i-j} \left( \tfrac{\delta \xi}{2(\xi + 1)} \right)^\frac{(q-1)_+}{2} \rho_{j}
\quad\text{and}\quad \hat\lambda := \sqrt{\tfrac{1}{8}},
$$
which yields that for any $i > j$, there holds
$$
	\osc_{Q_i \cap \Omega_T} u
	\leq
	\bomega_i.
$$

\subsection{Reduction in oscillation below zero}

Suppose that \eqref{eq:away from zero}$_1$ holds, which implies that
\begin{equation} \label{eq:mu_equiv_bz}
\muminus_j \leq \muplus_j < \tfrac{\xi}{\xi + 1} \muminus_j.
\end{equation}
Recall $\muminus_j = \inf_{Q_j\cap \Omega_T} u$, $\bomega_j = \max\{\delta \bomega_{j-1}, 4 \osc_{Q_{j-1}\cap \Omega_T}\psi, 4 \osc_{Q_{j-1}\cap S_T} g\}$, $\muplus_j = \muminus_j + \bomega_j $ and $\theta = \bomega_j^{q-1}$. Consider $\theta_* = |\muminus_j|^{q-1} $ analogously as in the case where $u$ is above zero. If $q <1$ we have $\theta_* \leq \theta$. If $q> 1$, it follows that 
$$
- \muminus_j < - \tfrac{\xi + 1}{\xi} \muplus_j \leq \tfrac{2(\xi + 1)}{\delta \xi} \bomega_j,
$$
such that $\theta_* < \left(\tfrac{2(\xi + 1)}{\delta \xi}\right)^{q-1} \theta$. Define $\hat \rho_j$ and $\widehat Q_j$ analogous to~\eqref{eq:hatQj}.
Again, consider the alternatives that $\max \{ \osc_{\widehat Q_j \cap \Omega_T} \psi, \osc_{\widehat Q_j \cap S_T} g \} = \tfrac{1}{4} \bomega_j$ or that~\eqref{eq:comparison g2} holds.
As in Section~\ref{sec:above_zero}, we define $Q_{j+1}$ and by means of Lemma~\ref{l.supersub-slicewise-to-pointwise} we infer that
$$
\osc_{Q_{j+1} \cap \Omega_T} u \leq \max\left\{ \delta \bomega_j, 4 \osc_{\widehat Q_j \cap \Omega_T} \psi,4 \osc_{\widehat Q_j \cap S_T} g \right\} =: \bomega_{j+1}.
$$
Next, we define $\muplus_{j+1} = \sup_{Q_{j+1} \cap \Omega_T} u$, $\muminus_{j+1} = \muplus_{j+1}- \bomega_{j+1}$. 
Now we can deduce that 
$$
\muplus_{j+1} \leq \muplus_j < - \xi \bomega_j \leq - \xi \bomega_{j+1}.
$$
Further, we have that
$$
\tfrac12 |\muminus_{j+1}| \leq \theta_*^\frac{1}{q-1} \leq \tfrac{\xi + 1}{\xi}| \muminus_{j+1}|.
$$
Consider the alternatives 
\begin{equation*}
	\muplus_{j+1} - \tfrac38 \bomega_{j+1} > \max \left\{ \sup_{Q_{j+1} \cap \Omega_T} \psi, \sup_{Q_{j+1} \cap S_T}g \right\}
	\quad \text{or} \quad
	\muminus_{j+1} + \tfrac38 \bomega_{j+1} < \inf_{Q_{j+1} \cap S_T} g.
\end{equation*}
At this stage, for $i > j$ we iterate the arguments in this section with the choices
$$
Q_i:= Q_{\hat \rho_i,\theta_* \hat \rho_i^2} \quad \text{ with } \hat \rho_i = \hat\lambda^{i-j} \left( \tfrac{\delta \xi}{2(\xi + 1)} \right)^\frac{(q-1)_+}{2} \rho_{j},\ \hat\lambda := \sqrt{\tfrac{1}{8}}.
$$
This shows that for any $i>j$, we have that
$$
	\osc_{Q_i \cap \Omega_T} u \leq \bomega_i.
$$

\subsection{Oscillation decay estimate up to the lateral boundary} \label{subsec:oscillation-decay-lateral}

We define
$$
	r_i :=
	\left\{
	\begin{array}{ll}
		\left( \tfrac{\delta \xi^2}{2(\xi+1)} \right)^\frac{q-1}{2} \lambda_s^i \rho_o
		&\text{if } q > 1, \\
		\left( \tfrac{2(\xi+1)}{\xi} \right)^\frac{q-1}{2}  \lambda^i \rho_o
		&\text{if } 0<q<1,
	\end{array}
	\right.
$$
where $\lambda_s = \delta^\frac{q-1}{2}\lambda$.
Observe that $r_i \leq \hat \rho_i$ for every $i \in \N_0$.
We claim that
$$
Q_{r_i} := Q_{r_i, \theta_o  r_i^2}\subset Q_i
$$
for any $i \in \N_0$. If $0<q<1$ and $i \leq j$, we have that $\theta_i = \bomega_{i}^{q-1} \geq \bomega_o^{q-1} = \theta_o$. If $0<q<1$ and $i > j$, there holds $\theta_* \geq \left( \frac{2(\xi + 1)}{\xi} \right)^{q-1} \bomega_o^{q-1} = \left( \frac{2(\xi + 1)}{\xi} \right)^{q-1}\theta_o$. Next, if $q>1$ and $i \leq j$, we have that $\theta_i = \bomega_i^{q-1} \geq \left(\delta^i \bomega_o\right)^{q-1}$. If $q>1$ and $i > j$, there holds $\theta_* \geq \left( \xi \bomega_j \right)^{q-1} \geq \left( \xi \delta^j \bomega_o \right)^{q-1}$.

Collecting the results from the three preceding sections, we infer that
\begin{align*}
\osc_{Q_{r_i, \theta_o  r_i^2}} u \leq \bomega_i &\leq \delta^i \bomega_o + 4 \sum_{k=0}^{i-1} \delta^k \left( \osc_{Q_{i-k-1}^{(\prime)}\cap \Omega_T}\psi + \osc_{Q_{i-k-1}^{(\prime)}\cap S_T} g \right) \\
&\leq \delta^i \bomega_o + \frac{4}{1-\delta} \left[\bomega_{\psi}(\tilde \rho_o ) + \bomega_g (\tilde \rho_o) \right],
\end{align*}
where $Q_{i-k-1}^{(\prime)}$ denotes $Q_{i-k-1}'$ for $i-k-1 \leq j$ and $Q_{i-k-1}$ for $i-k-1>j$.
Suppose that $0<q<1$. Recalling the definition of $r_i$, we find that
$$
\lambda^i = \left( \tfrac{2(\xi+1)}{\xi} \right)^\frac{1-q}{2} \frac{r_i}{\rho_o}.
$$
If $q > 1$ we have in a similar fashion that
$$
\lambda_s^i = \left( \tfrac{\delta \xi^2}{2(\xi+1)} \right)^\frac{1-q}{2} \frac{r_i}{\rho_o}.
$$
In the following, by the short notation $\lambda_{(s)}$ we mean $\lambda$ if $0<q<1$, and $\lambda_s$ if $q>1$.
Hence, for $\gamma_1 = \frac{\log \delta}{\log \lambda_{(s)}}$ we find that
\begin{align*}
\osc_{Q_{r_i, \theta_o  r_i^2}} u &\leq \lambda_{(s)}^{\gamma_1 i }\bomega_o + \frac{4}{1-\delta} \left[\bomega_{\psi}(\tilde \rho_o) + \bomega_g (\tilde \rho_o) \right] \\
&\leq c \left(\frac{r_i}{\rho_o} \right)^{\gamma_1} \bomega_o + \frac{4}{1-\delta} \left[\bomega_{\psi}(\tilde \rho_o) + \bomega_g (\tilde \rho_o) \right],
\end{align*}
where $c= c(C_o,C_1,n,q,\alpha_*)$. Let $r \in (0,r_o)$. Then there exists $i \in \N_0$ such that $r_{i+1} < r \leq r_i$. Since $r_i/r_{i+1} =  \lambda_{(s)}^{-1}$, it immediately follows that 
$$
\osc_{Q_{r, \theta_o  r^2}} u \leq c \left(\frac{r}{\rho_o} \right)^{\gamma_1} \bomega_o + \frac{4}{1-\delta} \left[\bomega_{\psi}(\tilde \rho_o) + \bomega_g (\tilde \rho_o) \right],
$$
where $c= c(C_o,C_1,n,q,\alpha_*)$. In a similar fashion, for $r \in [r_o,\rho_o)$ we obtain that
$$
\osc_{Q_{r, \theta_o  r^2}} u \leq \bomega_o  \leq \left( \tfrac{2(\xi+1)}{\xi} \right)^\frac{\gamma_1(1-q)}{2} \bomega_o \left(\frac{r}{\rho_o} \right)^{\gamma_1}
$$
if $0<q<1$ and
$$
\osc_{Q_{r, \theta_o  r^2}} u \leq \bomega_o  \leq \left( \tfrac{\delta \xi^2}{2(\xi+1)} \right)^\frac{\gamma_1(1-q)}{2} \bomega_o \left(\frac{r}{\rho_o} \right)^{\gamma_1}
$$
if $q>1$. Thus, altogether we have  that
$$
\osc_{Q_{r, \theta_o  r^2}} u \leq c \left(\frac{r}{\rho_o} \right)^{\gamma_1} \bomega_o + \frac{4}{1-\delta} \left[\bomega_{\psi}(\tilde \rho_o) + \bomega_g (\tilde \rho_o) \right]
$$
for every $r \in (0,\rho_o)$. Without loss of generality, we can replace $\rho_o$ by any $\tilde \rho \in (r,\rho_o)$. By choosing $\tilde \rho = \sqrt{r\rho_o}$ and by observing that $\tilde \rho_o = 32 \rho_o$, for any $r \in (0,\varrho_o)$ we obtain the desired oscillation decay estimate
$$
\osc_{Q_{r, \theta_o  r^2}} u \leq c \left(\frac{r}{\rho_o} \right)^\frac{\gamma_1} {2}\bomega_o + \frac{4}{1-\delta} \left[\bomega_{\psi}(32\sqrt{r \rho_o}) + \bomega_g (32 \sqrt{r \rho_o}) \right].
$$
This concludes the proof of the claim for $(x_o,t_o) \in S_T$ in Theorem~\ref{thm:main_theorem}.

\begin{remark} \label{rem:lateral-distance}
Observe that $\tilde \rho_o$ depends on the instance $t_o$ and the estimate in Theorem~\ref{thm:main_theorem} will depend on the distance to the boundary $\Omega \times \{0\}$ in this case. This is due to the fact that lateral and initial boundary data may not be compatible (i.e., continuous at the corner points). If we suppose $g \in C(\Omega_T \cup \partial_p \Omega_T)$ and $g_o \equiv g(\cdot,0)$, we do not need this restriction. Indeed, by replacing oscillation and extrema of $g$ over $Q \cap S_T$ by $Q \cap \partial_p \Omega_T$ in the argument, we can extend $u_k$ by $k$ to the negative times in the proof, which will be again a weak sub(super)solution to the obstacle free problem in the whole cylinder $Q$.
\end{remark}

\section{Continuity up to the initial boundary} \label{sec:initial}

Consider a cylinder $Q = Q_{\rho_o,\rho_o}^+(x_o,0) \subset \Omega_T$ with vertex $(x_o,0) \in \Omega \times \{0\}$ and $\rho_o < 1$. We set $x_o = 0$ and define
$$
\muplus_o = \sup_{Q} u, \quad \muminus_o = \inf_{Q} u.
$$
If $0<q<1$, let 
\begin{equation}
\bomega_o =\max \left\{2\|u\|_\infty,4,4 \osc_{Q} \psi, 4\osc_{B_{\rho_o}} g_o \right\}.
\end{equation}
If $q>1$, we use the rescaling argument in Appendix~\ref{appendix-a} with $M = 2 \|u\|_\infty$ and let
$$
\bomega_o := 1.
$$ 
Again, by uniform continuity of $\psi$ and $g_o$ we suppose that $\rho_o$ is so small that $4 \osc_{Q} \psi \leq 1$ and $4\osc_{B_{\rho_o}} g_o \leq 1$ independently of the point $x_o \in \Omega$.
Let $\theta_o = \left( \tfrac14 \bomega_o \right)^{q-1}$.
Then, in the whole range $0<q<\infty$ we have that
$$
Q_o := Q_{\rho_o, \theta_o \rho_o^2}^+(x_o,0) \subset Q \quad \text{ and }\quad \osc_{Q_o} u \leq \bomega_o.
$$

In the following, we will omit $(x_o,0)$ in order to simplify our notation.
Further, we say that $u$ is near zero if
\begin{equation} \label{e.nearzero-initial}
\muminus_o \leq \bomega_o \quad \text{ and } \quad \muplus_o \geq -\bomega_o
\end{equation} 
and away from zero if
\begin{equation} \label{e.awayzero-initial}
\muminus_o > \bomega_o \quad \text{ or } \quad \muplus_o < -\bomega_o.
\end{equation} 

\subsection{Reduction in oscillation near zero}
Observe that with our choice of $\bomega_o$ we start in the case where $u$ is near zero. This implies $|\mupm_o| \leq 2\bomega_o$. First, we suppose that
\begin{equation} \label{e.omega-alternative-init}
\muplus_o - \muminus_o \geq \tfrac34 \bomega_o.
\end{equation}
In the following, we use the alternatives
\begin{equation}
	\left\{
	\begin{array}{c}
	\muplus_o - \tfrac14 \bomega_o
	> \max \left\{ \sup_{Q_o} \psi, \sup_{B_{\rho_o}} g_o \right\}
	\quad \text{or} \quad
	\muminus_o + \tfrac14 \bomega_o < \inf_{B_{\rho_o}} g_o \\[5pt]
	\quad \text{or} \quad
	\bomega_o = 4 \max \left\{ \osc_{Q_o} \psi, \sup_{B_{\rho_o}} g_o \right\}
	\end{array}
	\right.
	\label{eq:alternatives-initial}
\end{equation}

If~\eqref{eq:alternatives-initial}$_1$ holds, let $k = \muplus_o - \tfrac14 \bomega_o$.
Then, by Lemma~\ref{lem:weak_subsol}~\eqref{lem:weak_subsol_2} $u_k := \max \{u,k\}$ is a weak subsolution to \eqref{eq:pde}$_1$.
Choosing $t_o = 0$ and $\xi = \frac14$ in Lemma~\ref{l.de-giorgi-initial}, we deduce that
$$
u \leq u_k \leq \muplus_o - \tfrac18 \bomega_o \quad \text{ a.e in }
Q_{\frac{\rho_o}{2}, \nu_o \theta_o \rho_o^2}^+.
$$
If~\eqref{eq:alternatives-initial}$_2$ holds, we set $k= \muminus_o + \tfrac14 \bomega_o$ and proceed in a similar way.
By denoting $\widetilde Q := Q_{\frac{\rho_o}{2}, \nu_o \theta_o \rho_o^2}^+ \subset Q_o$, taking into account that~\eqref{eq:alternatives-initial}$_3$ may hold or~\eqref{e.omega-alternative-init} is violated and combining the results, we have that
$$
	\osc_{\widetilde Q} u \leq \max \left\{ \tfrac78 \bomega_o, 4 \osc_{Q} \psi, 4\sup_{B_{\rho_o}} g_o \right\} =: \bomega_1.
$$
We define $\theta_1 := \big(\tfrac14 \bomega_1\big)^{q-1}$ and $\rho_1 := \lambda \rho_o$, where $\lambda := \tfrac12 \left( \frac78 \right)^\frac{(1-q)_+}{2}\nu_o^\frac{1}{2}$. Now, we set
$$
Q_1 := Q_{\rho_1, \theta_1 \rho_1^2}^+ \subset \widetilde Q.
$$

At this stage, we iterate the preceding arguments.
More precisely, for all indices $i = 1,...,j-1$ for which~\eqref{e.nearzero-initial} is satisfied we define
\begin{equation*}
	\left\{
	\begin{array}{c}
		 \rho_i :=  \lambda  \rho_{i-1},
		\quad
		\boldsymbol{\omega}_i :=
		\max \left\{ \tfrac{7}{8} \boldsymbol{\omega}_{i-1}, 4 \osc_{Q_{i-1}} \psi, 4 \osc_{B_{\rho_{i-1}}}g_o \right\},\\[5pt]
		\theta_i := \big(\tfrac14 \bomega_i\big)^{q-1},
		\quad
		 \lambda := \tfrac12 \left( \frac78 \right)^\frac{(1-q)_+}{2}\nu_o^\frac{1}{2},
		\quad
		Q_i := Q_{\rho_i, \theta_i \rho_i^2}^+,\\[5pt]
		\boldsymbol{\mu}_i^- := \inf_{Q_i} u
		\quad \text{ and } \quad
		\boldsymbol{\mu}_i^+ := \boldsymbol{\mu}_i^- + \boldsymbol{\omega}_i.
	\end{array}
	\right.
\end{equation*}
Then, for all $i = 1,...,j$ we obtain that
$$
	\osc_{Q_i} u \leq \bomega_i.
$$

\subsection{Reduction in oscillation above zero}
\label{sec:above_zero_initial}

Consider the first index $j$ for which \eqref{e.nearzero-initial} is false.
In this section, we assume that~\eqref{e.awayzero-initial}$_1$ holds, i.e.~$u$ is above zero.
For~\eqref{e.awayzero-initial}$_2$ we refer to the next section.
Since~\eqref{e.nearzero-initial} holds true for $j-1$, for $\delta = \frac78$ we have that 
$$
\tfrac12 \muplus_j <\muminus_j \leq \muplus_j,
$$
and
$$
\bomega_j < \muminus_j \leq \muplus_{j-1} \leq 2 \bomega_{j-1} \leq \tfrac{2}{\delta} \bomega_j,
$$
which imply that
$$
\muplus_j < \tfrac{4}{\delta} \bomega_j.
$$
Let $\theta_* = (\muplus_j)^{q-1}$. If $0<q<1$, then $\theta_* \leq \theta_j$, and if $q  >1$, then $\theta_* < \left(\tfrac{16}{\delta} \right)^{q-1} \theta_j$.

We use the alternatives 
\begin{equation}
	\left\{
	\begin{array}{c}
	\muplus_j - \tfrac14 \bomega_j
	> \max \left\{ \sup_{Q_j} \psi, \sup_{B_{\rho_j}} g_o \right\}
	\quad \text{or} \quad
	\muminus_j + \tfrac14 \bomega_j < \inf_{B_{\rho_j}} g_o \\[5pt]
	\quad \text{or} \quad
	\bomega_j \leq 4 \max \left\{ \osc_{Q_j} \psi, \sup_{B_{\rho_j}} g_o \right\}.
	\end{array}
	\right.
	\label{eq:alternatives-initial-j}
\end{equation}
Suppose  that alternative~\eqref{eq:alternatives-initial-j}$_1$ holds. Again $u_k = \max \{u,k\}$ for $k = \muplus_j - \tfrac14 \bomega_j$ is a weak subsolution in $Q_j$. Define $\hat \rho_j = \left( \tfrac{\delta}{16} \right)^\frac{(q-1)_+}{2} \rho_j$.
From Lemma~\ref{l.propagation-positivity-initial} we obtain that for any $\alpha \in(0,1)$ there exists $\nu = \nu (C_o,C_1,n,q,\alpha)\in (0,1)$ such that 
$$
\left| \big\{ \muplus_j - u_k \leq \tfrac18 \bomega_j \big\} \cap Q_{\hat \rho_j, \nu \theta_* \hat \rho_j^2}^+ \right| \leq \alpha |Q_{\hat \rho_j, \nu \theta_* \hat \rho_j^2}^+|.
$$
At this point, let $\alpha = \nu_1(n,q,C_o,C_1) \in (0,1)$ in Lemma~\ref{lem:de-giorgi-initial} (which also fixes $\nu = \nu(n,q,C_o,C_1) \in (0,1)$ in the length of the cylinders in the preceding inequality) such that Lemma~\ref{lem:de-giorgi-initial} is applicable. Then we can conclude 
$$
 u \leq u_k \leq \muplus_j - \tfrac{1}{16} \bomega_j\quad \text{ a.e. in } Q_{\frac{\hat \rho_j}{2}, \nu \theta_* \left( \frac{\hat \rho_j}{2} \right)^2}^+.
$$
If alternative~\eqref{eq:alternatives-initial}$_2$ holds, by similar arguments for $u_k = \min \{u,k\}$ with $k = \muminus_j + \frac14 \bomega_j$ we obtain that
$$
u \geq u_k \geq \muminus_j + \tfrac{1}{16}\bomega_j\quad \text{ a.e. in } Q_{\frac{\hat \rho_j}{2}, \nu \theta_* \left( \frac{\hat \rho_j}{2} \right)^2}^+.
$$

Define $Q_{j+1} := Q_{\hat \rho_{j+1}, \theta_* \hat \rho_{j+1}^2}^+$, where $\hat \rho_{j+1} = \hat \lambda \hat \rho_j$ with $\hat \lambda := \tfrac12 \nu^\frac12$. 
By taking into account that~\eqref{eq:alternatives-initial}$_3$ may hold, we obtain
$$
\osc_{Q_{j+1}} u \leq \max\Big\{ \tfrac{15}{16} \bomega_j, 4 \osc_{Q_j} \psi, 4 \osc_{B_{\rho_j}} g_o \Big\} =: \bomega_{j+1}.
$$
Define $\muminus_{j+1} = \inf_{Q_{j+1}} u$ and $\muplus_{j+1} = \muminus_{j+1} + \bomega_{j+1}$. Observe that 
$$
\muminus_{j+1} \geq \muminus_{j} > \bomega_j \geq \bomega_{j+1},
$$
which implies that for index $j+1$ \eqref{e.awayzero-initial}$_1$ holds. Furthermore, we have 
$$
\tfrac12 \muplus_{j+1} \leq \theta_*^\frac{1}{q-1} \leq 2 \muplus_{j+1}.
$$
Let $B_{j} = B_{\rho_j}$. Iterating the arguments above, for any $i > j$ we define
\begin{equation*}
	\left\{
	\begin{array}{c}
		\hat \rho_i := \hat \lambda \hat \rho_{i-1},
		\quad
		\hat \lambda := \tfrac12 \nu^\frac{1}{2},
		\quad
		B_{i} = B_{\hat \rho_i},
		\quad
		Q_i := Q_{\hat \rho_i, \theta_* \hat\rho_i^2}^+,
 \\[5pt]
		\boldsymbol{\omega}_i :=
		\max\left\{ \tfrac{15}{16} \boldsymbol{\omega}_{i-1}, 4 \osc_{Q_{i-1}} \psi, 4 \osc_{B_{i-1}}g_o \right\},
		\\[5pt]
		\boldsymbol{\mu}_i^- := \inf_{Q_i} u
		\quad \text{ and } \quad
		\boldsymbol{\mu}_i^+ := \boldsymbol{\mu}_i^- + \boldsymbol{\omega}_i
	\end{array}
	\right.
\end{equation*}
and obtain for any $i > j$ that
$$
	\osc_{Q_i} u \leq \bomega_i.
$$

\subsection{Reduction in oscillation below zero}

Suppose that $j\in \N$ is the first index such that~\eqref{e.awayzero-initial}$_2$ holds true. This implies that
$$
\muminus_j \leq \muplus_j < \tfrac12 \muminus_j.
$$
Since~\eqref{e.nearzero-initial} is fulfilled for $j-1$, we have that
$$
-\bomega_j >\muplus_j \geq \sup_{Q_j} u \geq \inf_{Q_{j-1}} u = \muminus_{j-1} \geq -2 \bomega_{j-1} \geq -\tfrac{2}{\delta} \bomega_j,
$$
where $\delta = \frac78$. By combining the estimates above we obtain that
$$
- \muminus_j \leq \tfrac{4}{\delta} \bomega_j.
$$
Define $\theta_* = |\muminus_j|^{q-1}$. If $0<q<1$, then $\theta_* \leq \theta_j$, and if $q>1$, then $\theta_* < \left( \tfrac{16}{\delta} \right)^{q-1} \theta_j$.
Using Lemmas~\ref{l.propagation-positivity-initial} and~\ref{lem:de-giorgi-initial} for alternatives~\eqref{eq:alternatives-initial-j}$_{1,2}$ as in Section~\ref{sec:above_zero_initial}, defining $Q_{j+1} := Q_{\hat \rho_{j+1}, \theta_* \hat \rho_{j+1}^2}^+$, where $\hat \rho_{j+1} = \hat \lambda \hat \rho_j$ with $\hat \lambda := \tfrac12 \nu^\frac12$ and $\hat \rho_j = \left( \tfrac{\delta}{16} \right)^\frac{(q-1)_+}{2} \rho_j$, and and taking \eqref{eq:alternatives-initial-j}$_3$ into account, we conclude that
$$
\osc_{Q_{j+1}} u \leq \max\Big\{ \tfrac{15}{16} \bomega_j, 4 \osc_{Q_j} \psi, 4 \osc_{B_{\rho_j}} g_o \Big\} =: \bomega_{j+1}.
$$
Define $\muplus_{j+1} = \sup_{Q_{j+1}} u$ and $\muminus_{j+1} = \muplus_{j+1} - \bomega_{j+1}$. Observe that 
$$
\muplus_{j+1} \leq \muplus_j < - \bomega_j \leq - \bomega_{j+1},
$$
which implies that~\eqref{e.awayzero-initial}$_2$ holds for index $j+1$. Furthermore
$$
\tfrac12 |\muminus_{j+1}| \leq \theta_*^\frac{1}{q-1} \leq 2 |\muminus_{j+1}|.
$$
Now we can repeat the arguments above. Let $B_{j} = B_{\rho_j}$ and for any $i > j$ define
\begin{equation*}
	\left\{
	\begin{array}{c}
		\hat \rho_i := \hat \lambda \hat \rho_{i-1},
		\quad
		\hat \lambda := \tfrac12 \nu^\frac{1}{2},
		\quad
		B_{i} = B_{\hat \rho_i},
		\quad
		Q_i := Q_{\hat \rho_i, \theta_* \hat\rho_i^2}^+,
 \\[5pt]
		\boldsymbol{\omega}_i :=
		\max\left\{ \tfrac{15}{16} \boldsymbol{\omega}_{i-1}, 4 \osc_{Q_{i-1}} \psi, 4 \osc_{B_{i-1}}g_o \right\},\\[5pt]
		\boldsymbol{\mu}_i^+ := \sup_{Q_i} u
		\quad \text{ and } \quad
		\boldsymbol{\mu}_i^- := \boldsymbol{\mu}_i^+ - \boldsymbol{\omega}_i
	\end{array}
	\right.
\end{equation*}
and conclude that for any $i > j$ there holds
$$
	\osc_{Q_i} u \leq \bomega_i.
$$

\subsection{Oscillation decay estimate up to the initial boundary}

We define
$$
	r_i :=
	\left\{
	\begin{array}{ll}
		\left( \tfrac{\delta}{16} \right)^\frac{q-1}{2} \lambda_s^i \rho_o
		&\text{if } q > 1, \\
		\left( \tfrac{16}{\delta} \right)^\frac{q-1}{2}  \lambda^i \rho_o
		&\text{if } 0<q<1,
	\end{array}
	\right.
$$
where $\lambda_s = \delta^\frac{q-1}{2} \lambda$, $\lambda = \tfrac12 \delta^\frac{(1-q)_+}{2} \min\{\nu_o,\nu\}^\frac{1}{2}$ and $\delta = \frac78$.
Observe that $r_i \leq \hat \rho_i$ for every $i \in \N_0$.
We claim that
$$
Q_{r_i, \theta_o r_i^2}^+ \subset Q_i
$$
for any $i \in \N_0$. If $0<q<1$ and $i \leq j$, we have $\theta_i = \left( \frac14 \bomega_{i}\right)^{q-1} \geq \left( \frac14 \bomega_o\right)^{q-1} = \theta_o$. If $0<q<1$ and $i > j$, there holds $\theta_* \geq \left( \frac{16}{\delta} \frac14 \bomega_o \right)^{q-1}  = \left( \frac{16}{\delta} \right)^{q-1}\theta_o$. If $q>1$ and $i \leq j$, we have $\theta_i = ( \frac14 \bomega_i)^{q-1} \geq \left(\delta^i \frac14 \bomega_o\right)^{q-1}$. If $q>1$ and $i > j$, there holds $\theta_* \geq \left( \frac14 \bomega_j\right)^{q-1} \geq \left( \delta^j \frac14 \bomega_o \right)^{q-1}$.

By denoting $\tilde \delta = \frac{15}{16}$, we have that
\begin{align*}
\osc_{Q_{r_i, \theta_o r_i^2}^+} u \leq \bomega_i &\leq \tilde \delta^i \bomega_o + 4 \sum_{k=0}^{i-1} \tilde\delta^k \left( \osc_{Q_{i-k-1}}\psi + \osc_{B_{i-k-1}} g_o \right) \\
&\leq \tilde \delta^i \bomega_o + \frac{4}{1-\tilde \delta} \left[\bomega_{\psi}(\rho_o) + \bomega_{g_o} (\rho_o) \right].
\end{align*}
Hence, arguing as in Section~\ref{subsec:oscillation-decay-lateral} for any $r \in (0,\varrho_o)$ we arrive at
$$
\osc_{Q_{r, \theta_o r^2}^+} u \leq c \left(\frac{r}{\rho_o} \right)^\frac{\gamma_1} {2}\bomega_o + \frac{4}{1-\tilde\delta} \left[\bomega_{\psi}(\sqrt{r \rho_o}) + \bomega_{g_o} (\sqrt{r \rho_o}) \right].
$$
This proves the claim of Theorem~\ref{thm:main_theorem} regarding vertices $(x_o,0) \in \Omega \times \{0\}$.

\begin{remark}
In an analogous way as in Section~\ref{sec:continuity-lateral}, the final estimate in Theorem~\ref{thm:main_theorem} depends on the distance to the boundary $S_T$. If the boundary data are compatible, this can be avoided with similar arguments as in Remark~\ref{rem:lateral-distance}.
\end{remark}

\appendix

\section{Rescaling argument} \label{appendix-a}

Let $M>0$ and consider the functions
\begin{align} \label{e.rescaled_functions}
	\tilde{u}(x,t) :=
	\frac{1}{M} u(x, &M^{q-1} t), 
	\quad	
	\widetilde{\psi}(x,t) :=
	\frac{1}{M} \psi(x, M^{q-1} t),
	\, \text{ and } \\
	&\tilde{g}(x,t) :=
	\frac{1}{M} g(x, M^{q-1} t) \nonumber
\end{align}
together with
\begin{align} \label{e.rescaled_A}
	\widetilde{\mathbf{A}}(x,t,\boldsymbol{v}^q,\zeta)
	:=
	\frac{1}{M} \mathbf{A}(x, M^{q-1}t, (\boldsymbol{Mv})^q, M\zeta)
\end{align}
for $(x,t) \in \Omega_{\widetilde{T}} := \Omega \times (0,\widetilde{T}) := \Omega \times (0, M^{1-q} T)$, such that the vector field $\widetilde{\mathbf{A}}$ satisfies the same structure conditions as $\mathbf{A}$.

\begin{lemma} \label{l.rescaling}
Let $\tilde{u}$, $\widetilde{\psi}$ and $\tilde{g}$ be defined as in~\eqref{e.rescaled_functions} and $\widetilde{\mathbf{A}}$ as in~\eqref{e.rescaled_A}. Then $\tilde{u}$ is a weak solution to the obstacle problem
with obstacle $\widetilde{\psi}$ and boundary values $\tilde{g}$ such that
$$
	\partial_t \boldsymbol{\tilde{u}}^q
	- \Div \widetilde{\mathbf{A}}(x,t,\boldsymbol{\tilde{u}}^q, \nabla\tilde{u}) = 0
	\quad \text{in } \Omega_{\widetilde{T}}
$$
in the sense of Definition~\ref{d.obstacle-wsol}.
\end{lemma}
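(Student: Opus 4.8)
The plan is to treat Lemma~\ref{l.rescaling} as a pure change of variables in the time direction, matching every admissible test object for the rescaled problem on $\Omega_{\widetilde T}$ with an admissible one for the original problem on $\Omega_T$ through the substitution $s=M^{q-1}t$, and then reading off the claim for $\tilde u$ from the variational inequality \eqref{eq:obstacle-wsol} for $u$.

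First I would record the function-space membership and the boundary conditions, which are immediate consequences of the corresponding properties of $u$, since the rescaling only reparametrizes time and multiplies the value by $1/M$: one gets $\tilde u,\tilde g\in C((0,\widetilde T);L^{q+1}(\Omega))\cap L^2(0,\widetilde T;H^1(\Omega))$ from the analogous properties of $u,g$, the pointwise inequalities $\tilde u\geq\widetilde\psi$ and $\tilde g\geq\widetilde\psi$ from $u\geq\psi$ and $g\geq\psi$, and $\tilde u-\tilde g\in L^2(0,\widetilde T;H^1_0(\Omega))$ from $u-g\in L^2(0,T;H^1_0(\Omega))$ after the substitution $s=M^{q-1}t$ (which also supplies the harmless Jacobian factor $M^{1-q}$). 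Hence $\tilde g\in K_{\widetilde\psi}(\Omega_{\widetilde T})$ and $\tilde u\in K_{\widetilde\psi,\tilde g}(\Omega_{\widetilde T})$. If moreover $u$ attains an initial datum $g_o$ in the sense of \eqref{eq:obstacle-initial-q+1}, the same substitution gives $\bint_0^h\int_\Omega\bigl|\tilde u-\tfrac1M g_o\bigr|^{q+1}\,\d x\,\d t = M^{-q-1}\bint_0^{M^{q-1}h}\int_\Omega|u-g_o|^{q+1}\,\d x\,\d s\to0$ as $h\downarrow0$, so $\tilde u$ attains $\tfrac1M g_o$.

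For the variational inequality I would fix an arbitrary admissible triple $(\tilde\alpha,\tilde\eta,\tilde v)$ for the rescaled problem and set $\alpha(s):=\tilde\alpha(M^{1-q}s)$, $\eta:=\tilde\eta$ and $v(x,s):=M\,\tilde v(x,M^{1-q}s)$. One checks that $\alpha\in W^{1,\infty}_0([0,T];\R_{\geq0})$, $\eta\in C^1_0(\Omega,\R_{\geq0})$ and $v\in K'_\psi(\Omega_T)\cap C([0,T];L^{q+1}(\Omega))$ — in particular $v\geq\psi$ because $\tilde v\geq\widetilde\psi$, and $\partial_s v(x,s)=M^{2-q}(\partial_t\tilde v)(x,M^{1-q}s)\in L^{q+1}(\Omega_T)$ — so $(\alpha,\eta,v)$ is admissible in \eqref{eq:obstacle-wsol} for $u$. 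Inserting it there and substituting $s=M^{q-1}t$ in every integral, the key pointwise identities are $\boldsymbol u^q(x,M^{q-1}t)=\bigl(\boldsymbol{M\tilde{u}}(x,t)\bigr)^q=M^q\boldsymbol{\tilde u}^q(x,t)$ (using $(\boldsymbol{Ma})^q=M^q\boldsymbol a^q$ for $M>0$), $\nabla u(x,M^{q-1}t)=M\nabla\tilde u(x,t)$, and, by \eqref{e.rescaled_A}, $\mathbf A\bigl(x,M^{q-1}t,(\boldsymbol{M\tilde{u}})^q,M\nabla\tilde u\bigr)=M\,\widetilde{\mathbf A}(x,t,\boldsymbol{\tilde u}^q,\nabla\tilde u)$. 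Keeping track of the powers of $M$ — including $\alpha'(s)=M^{1-q}\tilde\alpha'(M^{1-q}s)$, the value scaling $v-u=M(\tilde v-\tilde u)$, and $\d s=M^{q-1}\,\d t$ — one finds that the parabolic term $\llangle\partial_s\boldsymbol u^q,\alpha\eta(v-u)\rrangle$ equals $M^{q+1}\,\llangle\partial_t\boldsymbol{\tilde u}^q,\tilde\alpha\tilde\eta(\tilde v-\tilde u)\rrangle$, while the diffusion term equals $M^{q+1}\iint_{\Omega_{\widetilde T}}\tilde\alpha\,\widetilde{\mathbf A}(x,t,\boldsymbol{\tilde u}^q,\nabla\tilde u)\cdot\nabla\bigl(\tilde\eta(\tilde v-\tilde u)\bigr)\,\d x\,\d t$. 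Dividing the resulting inequality by $M^{q+1}>0$ gives \eqref{eq:obstacle-wsol} for $\tilde u$ with vector field $\widetilde{\mathbf A}$, which is all that Definition~\ref{d.obstacle-wsol} requires.

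The argument is essentially a bookkeeping exercise, so I expect the main (mild) difficulty to be purely notational: keeping the powers of $M$ straight, correctly scaling the signed power $\boldsymbol{(\cdot)}^q$ via $(\boldsymbol{Ma})^q=M^q\boldsymbol a^q$, checking that the transformed triple $(\alpha,\eta,v)$ is admissible for the original problem (above all that $v$ stays above $\psi$ and has a time derivative in $L^{q+1}$), and verifying that both terms of \eqref{eq:obstacle-wsol} carry the \emph{same} factor $M^{q+1}$, so that dividing preserves the sign of the inequality. That $\widetilde{\mathbf A}$ is a Carath\'eodory vector field satisfying \eqref{eq:structure} with the same constants $C_o,C_1$ — already noted just before \eqref{e.rescaled_A} — follows from the homogeneity of \eqref{eq:structure} and is used only implicitly.
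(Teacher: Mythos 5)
Your proposal is correct and follows essentially the same route as the paper: a change of variables in time, checking that the rescaled data and boundary/initial conditions transform as claimed, with the variational inequality obtained by pulling back admissible test objects and matching the common factor $M^{q+1}$ (the paper delegates this last part to the analogous computation in \cite{MS}, which you instead carry out explicitly). The only content of the paper's proof you omit — the moduli-of-continuity estimates for $\widetilde\psi$, $\tilde g$, $\tilde g_o$ — is not part of the lemma's statement and is recorded there only for later use, so your argument covers the claim as stated.
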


\begin{proof}
The variational inequality in Definition~\ref{d.obstacle-wsol} is satisfied with similar arguments as in~\cite{MS}. By denoting $\tilde{t} = M^{1-q} t$, for $g$ with modulus of continuity $\bomega_g$ on $S_T$ we have that 
\begin{align*}
|\tilde{g}(x,\tilde t) - \tilde{g}(y,\tilde s)| \leq \frac{1}{M} \bomega_g\left(|x-y| + |M^{q-1} \tilde t- M^{q-1} \tilde s|^\frac{1}{2}\right)
\end{align*}
for every $(x,\tilde t), (y, \tilde s) \in S_{\widetilde T}$. Similarly, we find that
\begin{align*}
|\widetilde{\psi}(x,\tilde t) - \widetilde{\psi}(y,\tilde s)| \leq \frac{1}{M} \bomega_\psi \left(|x-y| + |M^{q-1} \tilde t- M^{q-1} \tilde s|^\frac{1}{2}\right)
\end{align*}
for every $(x,\tilde t), (y, \tilde s) \in \Omega_{\widetilde T}$ and 
\begin{align*}
|\tilde{g}_o(x) - \tilde{g}_o(y)| \leq \frac{1}{M} \bomega_{g_o}\left(|x-y| \right)
\end{align*}
for every $x,y \in \Omega$. Also, $\tilde u (\cdot,\tilde t) - \tilde g (\cdot,\tilde t) = \frac{1}{M} \left( u(\cdot,M^{q-1}\tilde t) - g(\cdot, M^{q-1}\tilde t) \right) \in H^1_0(\Omega)$ for a.e. $\tilde t \in (0,\widetilde  T)$.

Moreover,
\begin{align*}
\bint_0^h \int_\Omega |\tilde u(x,\tilde t) - \tilde g_o(x)|^{q+1}\, \d x \d \tilde t &= M^{-(q+1)} \bint_0^{M^{q-1}h} \int_\Omega | u(x, t) - g_o(x)|^{q+1}\, \d x \d t \\
&\xrightarrow{h \to 0} 0,
\end{align*}
completing the proof.
\end{proof}

\section{On the local and global notions of solution}
\label{sec:appendix_definitions}
Here we show that when the obstacle function satisfies~\eqref{eq:psi_conds}, our notion of solution includes the notion of so called global weak solutions to the obstacle problem as defined in~\cite{BLS} (see also~\cite{Schaetzler1,Schaetzler2}). We point out that the results also hold true under the assumptions $\psi \in L^2(0,T;H^1(\Omega)) \cap C([0,T]; L^{q+1}(\Omega))$ and $\partial_t \psi \in L^{q+1}(\Omega_T)$. On the other hand, by \cite[Lemma 3.5]{BLS} any weak solution attaining the initial and boundary datum $g$ in the sense of Definition~\ref{d.obstacle-wsol} (1)-(3) is a global solution, provided that $\R^n \setminus \Omega$ is uniformly 2-thick and $g \in K'_{\psi}(\Omega_T) \cap C([0,T];L^{q+1}(\Omega))$ with $g(\cdot,0) = g_o \in I_{\psi_o}(\Omega)$.

First, defining the class of functions
$$
K'_{\psi,g}(\Omega_T) := \left\{ v \in K_{\psi,g}(\Omega_T): \partial_t v \in L^{q+1}(\Omega_T) \right\},
$$
a global weak solution to the obstacle problem~\eqref{eq:pde} is given as follows.

\begin{definition} \label{def:obst-gsol}
Suppose that $g\in K'_{\psi}(\Omega_T) \cap C([0,T];L^{q+1}(\Omega))$ and $g(\cdot,0 ) = g_o \in I_{\psi_o}(\Omega)$. A function $u \in K_{\psi,g}(\Omega_T) \cap C([0,T];L^{q+1}(\Omega))$ is a global weak solution to the obstacle problem if and only if
$$
\llangle \partial_t \u^q, \alpha \left( v-u \right) \rrangle_{g_o} + \iint_{\Omega_T} \alpha \mathbf{A}(x,t,\u^q, \nabla u) \cdot \nabla(v-u) \, \d x \d t \geq 0
$$
for every cut-off function $\alpha\in W^{1,\infty}([0,T]; \R_{\geq 0})$ with $\alpha(T) = 0$ and all comparison maps $v \in K'_{\psi,g}(\Omega_T) \cap C([0,T];L^{q+1}(\Omega))$. Here we define the time-term by
\begin{align*}
\llangle \partial_t \u^q , \alpha(v-u) \rrangle_{g_o} := &\iint_{\Omega_T} \alpha' \left( \tfrac{q}{q+1}|u|^{q+1} -  \u^q v \right) - \alpha  \u^q \partial_t v \, \d x \d t \\
&\phantom{=} + \alpha(0) \int_\Omega \tfrac{q}{q+1} |g_o|^{q+1} - \boldsymbol{g}_o^q v(\cdot,0) \, \d x.
\end{align*}
\end{definition}

First we show that initial values are attained in the sense of Definition~\ref{d.obstacle-wsol} (3). To this end, let us define
$$
	\mathfrak{g}_q(v,u)
	:=
	q \int_u^v |s|^{q-1} (s-u) \,\ds
	=
	\tfrac{q}{q+1} \big( |v|^{q+1} - |u|^{q+1} \big) - u \big( \boldsymbol{v}^q - \u^q \big),
$$
such that 
$$
\mathfrak{g}_\frac{1}{q}(\boldsymbol{v}^q,\u^q) := \tfrac{1}{q+1} |v|^{q+1} - \tfrac{1}{q+1} |u|^{q+1} - \u^q (v-u).
$$
Observe that if $q > 1$, for any $a,b,c \in \R$ by~\cite[Lemma 2.2 and 2.3]{Schaetzler2} we have that
\begin{equation} \label{eq:I-estimate}
|a-b|^{q+1} \leq \gamma(q) \left( \mathfrak{g}_\frac{1}{q}(\boldsymbol{c}^q, \boldsymbol{a}^q) + |c-b|^{q+1}\right).
\end{equation}
If $0<q<1$, by using also~\cite[Lemma 2.1]{BDL} we find that
\begin{equation} \label{eq:I-estimate-deg}
|a-b|^{q+1} \leq \gamma(q) \left( (|a|+|c|)^{(1-q)(1+q)}\mathfrak{g}_\frac{1}{q}^q(\boldsymbol{c}^q, \boldsymbol{a}^q) + |c-b|^{q+1} \right).
\end{equation}

In the next lemma, we show that a global weak solution in the sense of Definition~\ref{def:obst-gsol} attains initial values in the sense of~\eqref{eq:obstacle-initial-q+1} in Definition~\ref{d.obstacle-wsol}.

\begin{lemma} \label{lem:initial-values-impl}
A global weak solution to the obstacle problem according to Definition~\ref{def:obst-gsol} attains initial values in the sense of~\eqref{eq:obstacle-initial-q+1} in Definition~\ref{d.obstacle-wsol}.
\end{lemma}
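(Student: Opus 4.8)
The plan is to test the global variational inequality of Definition~\ref{def:obst-gsol} once, with the comparison map $v=g$ and a cutoff concentrating at the initial time, and to recognise the surviving contribution as a nonnegative quantity that is then forced to vanish. First note that $g$ itself is an admissible comparison map: by the hypotheses of Definition~\ref{def:obst-gsol} we have $g\in K'_\psi(\Omega_T)\cap C([0,T];L^{q+1}(\Omega))$, and since $g-g=0\in L^2(0,T;H^1_0(\Omega))$ this gives $g\in K'_{\psi,g}(\Omega_T)$. For $\tau\in(0,T)$ I would take $\alpha=\alpha_\tau\in W^{1,\infty}([0,T];\R_{\geq0})$ to be the piecewise affine function with $\alpha_\tau(0)=1$ that decreases linearly to $0$ on $[0,\tau]$ and vanishes on $[\tau,T]$; then $\alpha_\tau(T)=0$ and $\alpha_\tau'=-\tfrac1\tau\chi_{(0,\tau)}$, so $\alpha_\tau$ is admissible.

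Inserting $v=g$ and $\alpha=\alpha_\tau$ and using $g(\cdot,0)=g_o$ together with $\boldsymbol{g}_o^q g_o=|g_o|^{q+1}$, the boundary part of the time-term in Definition~\ref{def:obst-gsol} contributes the $\tau$-independent constant $-\tfrac{1}{q+1}\int_\Omega|g_o|^{q+1}\,\d x$, while the bulk time-term and the diffusion term are integrals supported in $(0,\tau)$. I would then let $\tau\downarrow 0$. Writing $u_0:=u(\cdot,0)\in L^{q+1}(\Omega)$, the term $-\tfrac1\tau\int_0^\tau\int_\Omega\big(\tfrac{q}{q+1}|u|^{q+1}-\u^q g\big)\,\d x\,\d t$ converges to $-\int_\Omega\big(\tfrac{q}{q+1}|u_0|^{q+1}-\boldsymbol{u}_0^q g_o\big)\,\d x$, because $t\mapsto u(\cdot,t)$ and $t\mapsto g(\cdot,t)$ are continuous into $L^{q+1}(\Omega)$ (so $|u(\cdot,t)|^{q+1}\to|u_0|^{q+1}$ in $L^1(\Omega)$ and $\u^q(\cdot,t)\,g(\cdot,t)\to\boldsymbol{u}_0^q g_o$ in $L^1(\Omega)$ by Hölder). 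The remaining $\tau$-dependent terms $\int_0^\tau\int_\Omega\alpha_\tau\u^q\partial_t g$ and $\int_0^\tau\int_\Omega\alpha_\tau\,\mathbf{A}(x,t,\u^q,\nabla u)\cdot\nabla(g-u)$ vanish as $\tau\downarrow 0$ by absolute continuity of the integral, since $\u^q\partial_t g\in L^1(\Omega_T)$ (Hölder, using $\u^q\in L^\infty(0,T;L^{(q+1)/q}(\Omega))$ and $\partial_t g\in L^{q+1}(\Omega_T)$) and $\mathbf{A}(x,t,\u^q,\nabla u)\cdot\nabla(g-u)\in L^1(\Omega_T)$ (by~\eqref{eq:structure}$_2$ and $\nabla(g-u)\in L^2(\Omega_T)$). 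Hence in the limit
\[
\int_\Omega\Big(\tfrac{q}{q+1}|u_0|^{q+1}-\boldsymbol{u}_0^q g_o+\tfrac{1}{q+1}|g_o|^{q+1}\Big)\,\d x\le 0,
\]
and the integrand is exactly $\mathfrak{g}_\frac{1}{q}(\boldsymbol{g}_o^q,\boldsymbol{u}_0^q)$ in the notation of Appendix~\ref{sec:appendix_definitions}.

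Since $\mathfrak{g}_\frac{1}{q}(a,b)\ge 0$ for all $a,b\in\R$ — directly from its definition $\tfrac1q\int_b^a|s|^{1/q-1}(s-b)\,\d s$, or via Lemma~\ref{lem:calg} applied with exponent $1/q$ — the displayed inequality forces $\mathfrak{g}_\frac{1}{q}(\boldsymbol{g}_o^q,\boldsymbol{u}_0^q)=0$ a.e.\ in $\Omega$, whence $\boldsymbol{u}_0^q=\boldsymbol{g}_o^q$ and therefore $u_0=g_o$ a.e.\ in $\Omega$. Finally
\[
\bint_0^h\int_\Omega|u-g_o|^{q+1}\,\d x\,\d t=\bint_0^h\|u(\cdot,t)-u_0\|_{L^{q+1}(\Omega)}^{q+1}\,\d t\xrightarrow{h\downarrow 0}0
\]
by continuity of $t\mapsto u(\cdot,t)$ in $L^{q+1}(\Omega)$ at $t=0$, which is precisely~\eqref{eq:obstacle-initial-q+1}.

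The main thing to get right is the bookkeeping in the limit $\tau\downarrow 0$: one must identify the surviving combination as the nonnegative quantity $\mathfrak{g}_\frac{1}{q}(\boldsymbol{g}_o^q,\boldsymbol{u}_0^q)$ and verify the integrability needed to justify each passage to the limit. All of this follows from~\eqref{eq:structure}, the regularity encoded in $K'_{\psi,g}(\Omega_T)$ and $K'_\psi(\Omega_T)$, and the continuity $u,g\in C([0,T];L^{q+1}(\Omega))$; no quantitative estimate is needed here — in particular neither~\eqref{eq:I-estimate} nor~\eqref{eq:I-estimate-deg}, which are reserved for the converse implication.
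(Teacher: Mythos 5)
Your proof is correct, and it shares the paper's starting point (testing Definition~\ref{def:obst-gsol} with the comparison map $v=g$ and a piecewise affine cutoff concentrating at $t=0$) but concludes differently. The paper does not pass to the initial time slice: it splits $-\iint\alpha\,\u^q\partial_t g$ as $-\iint\alpha(\u^q-\g^q)\partial_t g-\iint\alpha\,\g^q\partial_t g$ and integrates the second piece by parts, so that the quantity $\mathfrak g_{\frac1q}(\g^q,\u^q)$ appears under the time average $\bint_0^\eps\int_\Omega$ at \emph{positive} times; it then converts this into the claim \eqref{eq:obstacle-initial-q+1} via the quantitative inequalities \eqref{eq:I-estimate} (for $q>1$) and \eqref{eq:I-estimate-deg} with H\"older (for $0<q<1$), using $g\in C([0,T];L^{q+1}(\Omega))$ with $g(\cdot,0)=g_o$. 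You instead exploit the continuity $u\in C([0,T];L^{q+1}(\Omega))$ built into Definition~\ref{def:obst-gsol} to pass to the limit directly in the $\alpha'$-term, identify the surviving quantity as $\int_\Omega\mathfrak g_{\frac1q}(\boldsymbol g_o^q,\boldsymbol u_0^q)\,\dx\le 0$ at the slice $t=0$, and conclude the stronger pointwise statement $u(\cdot,0)=g_o$, after which \eqref{eq:obstacle-initial-q+1} is immediate from the same continuity. This buys you a proof with no case distinction in $q$ and no use of \eqref{eq:I-estimate}--\eqref{eq:I-estimate-deg}, at the price of leaning entirely on the $C([0,T];L^{q+1})$ assumption (the paper also uses it, but only at the end and in a milder way) and on the continuity of the Nemytskii maps $w\mapsto|w|^{q+1}$ and $w\mapsto\boldsymbol w^q$; the latter is the one point you gloss over with ``by H\"older'' --- to get $\u^q(\cdot,t)g(\cdot,t)\to\boldsymbol u_0^qg_o$ in $L^1(\Omega)$ you should note $\u^q(\cdot,t)\to\boldsymbol u_0^q$ in $L^{\frac{q+1}{q}}(\Omega)$, which follows from the elementary estimates $|\boldsymbol a^q-\boldsymbol b^q|\le c(q)|a-b|^q$ for $q\le1$ and $|\boldsymbol a^q-\boldsymbol b^q|\le c(q)(|a|+|b|)^{q-1}|a-b|$ for $q>1$, so this is a harmless omission rather than a gap.
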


\begin{proof}
Fix $\eps > 0$ and let $\alpha$ be a piecewise affine function such that $\alpha(t) = (\eps - t)/\eps$ in $[0,\eps)$ and $\alpha \equiv 0$ in $[\eps, T)$. We use $g$ as comparison map. In particular, we have
\begin{align*}
- \iint_{\Omega_T} \alpha \u^q \partial_t g \, \d x \d t 
&= -  \iint_{\Omega_T} \alpha (\u^q- \g^q) \partial_t g \, \d x \d t -  \iint_{\Omega_T} \alpha \g^q \partial_t g \, \d x \d t \\
&= -  \iint_{\Omega_T} \alpha (\u^q- \g^q) \partial_t g \, \d x \d t + \tfrac{1}{q+1} \iint_{\Omega_T} \alpha' |g|^{q+1} \, \d x \d t \\
&\phantom{+} + \tfrac{1}{q+1} \alpha(0) \int_\Omega |g_o|^{q+1} \, \d x.
\end{align*}
In total we have
\begin{align*}
- \iint_{\Omega_T} \alpha' \mathfrak{g}_\frac{1}{q}(\g^q,\u^q) \, \d x \d t &\leq \iint_{\Omega_T} \alpha \mathbf{A}(x,t,\u^q, \nabla u) \cdot \nabla(g-u) \, \d x \d t \\
&\phantom{+} + \iint_{\Omega_T} \alpha (\g^q - \u^q) \partial_t g \, \d x \d t.
\end{align*}
Lebesgue's dominated convergence theorem implies that 
\begin{equation*}
\lim_{\eps \to 0} \bint_0^\eps \int_\Omega \mathfrak{g}_\frac{1}{q}(\g^q,\u^q) \, \d x \d t \leq 0.
\end{equation*}
In case $q > 1$~\eqref{eq:I-estimate} yields
\begin{align*}
\bint_0^\eps \int_\Omega |u-g_o|^{q+1} \, \d x \d t 
&\leq c(q) \bint_0^\eps \int_\Omega \mathfrak{g}_\frac{1}{q}(\g^q,\u^q) \, \d x \d t \\
&\phantom{+}+ c(q)\bint_0^\eps \int_\Omega |g-g_o|^{q+1} \, \d x \d t\\
&\xrightarrow{\eps \to 0} 0
\end{align*}
by using also the fact $g\in C([0,T];L^{q+1}(\Omega))$ with $g(\cdot,0 ) = g_o$.

In case $0<q<1$ we use~\eqref{eq:I-estimate-deg} together with H\"older's inequality and obtain
\begin{align*}
\bint_0^\eps \int_\Omega |u-g_o|^{q+1} \, \d x \d t 
&\leq c(q) \left(\bint_0^\eps \int_\Omega \left( |u| + |g| \right)^{q+1}  \, \d x \d t \right)^{1-q} \\
&\phantom{+}\cdot \left( \bint_0^\eps \int_\Omega \mathfrak{g}_\frac{1}{q}(\g^q,\u^q) \, \d x \d t \right)^q \\
&\phantom{+} + c(q) \bint_0^\eps \int_\Omega |g-g_o|^{q+1} \, \d x \d t\\
&\xrightarrow{\eps \to 0} 0.
\end{align*}
By using also that $u \in C([0,T];L^{q+1}(\Omega))$ the proof is completed.
\end{proof}

\begin{lemma} \label{lem:global-is-local}
Suppose that $\Omega \subset \R^n$ is a bounded open set which satisfies~\eqref{geometry}. Let $g$ be as in Definition~\ref{def:obst-gsol} and $\psi \in C(\overline{\Omega_T})$. If $u$ is a global weak solution according to Definition~\ref{def:obst-gsol}, then $u$ is a weak solution according to Definition~\ref{d.obstacle-wsol} satisfying properties (1) and (2).
\end{lemma}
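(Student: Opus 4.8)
The plan is to derive the local variational inequality~\eqref{eq:obstacle-wsol} by testing the global inequality in Definition~\ref{def:obst-gsol} with a sequence of admissible global comparison maps converging to $u+\eta(v-u)$; once \eqref{eq:obstacle-wsol} is established, property~(1) of Definition~\ref{d.obstacle-wsol} follows (a global solution lies in $K_{\psi,g}(\Omega_T)\subset K_\psi(\Omega_T)$) and so does property~(2) (since $u-g\in L^2(0,T;H^1_0(\Omega))$ and $g\in K'_\psi(\Omega_T)\subset K_\psi(\Omega_T)$). So I fix $\alpha\in W^{1,\infty}_0([0,T];\R_{\geq0})$, $\eta\in C^1_0(\Omega,\R_{\geq0})$ and $v\in K'_\psi(\Omega_T)\cap C([0,T];L^{q+1}(\Omega))$; by homogeneity of \eqref{eq:obstacle-wsol} in $\eta$ we may assume $0\leq\eta\leq1$. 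Being a global solution, $u\in K_{\psi,g}(\Omega_T)\cap C([0,T];L^{q+1}(\Omega))$, hence $u-g\in L^2(0,T;H^1_0(\Omega))\cap C([0,T];L^{q+1}(\Omega))$ with $(u-g)(\cdot,0)=g_o-g_o=0$.

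First I would construct the comparison maps. Let $\mollifytime{u-g}{h,0}$ denote the time mollification~\eqref{e.time_moll_g} of $u-g$ with zero initial datum; by Lemma~\ref{lem:time_mollification} it lies in $L^2(0,T;H^1_0(\Omega))\cap C([0,T];L^{q+1}(\Omega))$, has $\partial_t\mollifytime{u-g}{h,0}=\tfrac1h\big((u-g)-\mollifytime{u-g}{h,0}\big)\in L^{q+1}(\Omega_T)$, and converges to $u-g$ in $L^2(0,T;H^1_0(\Omega))$, $L^{q+1}(\Omega_T)$ and $C([0,T];L^{q+1}(\Omega))$ as $h\downarrow0$. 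Setting $\hat u_h:=g+\mollifytime{u-g}{h,0}$, I would take
$$
v_h:=(1-\eta)\,\hat u_h+\eta\, v+\sigma_h,
$$
with a correction $\sigma_h\geq0$ to be described below. The point of the $g+\mollifytime{u-g}{h,0}$ combination is that $v_h-g=\mollifytime{u-g}{h,0}+\eta(v-\hat u_h)+\sigma_h$, which lies in $L^2(0,T;H^1_0(\Omega))$ as soon as $\sigma_h$ does (the $\eta$-term has compact support in $\Omega$); moreover $v_h\in L^2(0,T;H^1(\Omega))\cap C([0,T];L^{q+1}(\Omega))$, and $\partial_t v_h\in L^{q+1}(\Omega_T)$ because $\partial_t g,\partial_t v,\partial_t\mollifytime{u-g}{h,0}\in L^{q+1}(\Omega_T)$ and $\sigma_h$ will be chosen with $\partial_t\sigma_h\in L^{q+1}(\Omega_T)$. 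So $v_h$ is admissible for Definition~\ref{def:obst-gsol} once $\sigma_h$ is chosen so that $v_h\geq\psi$.

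Next I would insert $v_h$ together with the cut-off $\alpha$ into the global inequality. Since $\alpha(0)=0$, the boundary term in $\llangle\partial_t\u^q,\alpha(v_h-u)\rrangle_{g_o}$ drops and the inequality reduces to
$$
\iint_{\Omega_T}\alpha'\Big(\tfrac{q}{q+1}|u|^{q+1}-\u^q v_h\Big)-\alpha\,\u^q\partial_t v_h\,\dx\dt+\iint_{\Omega_T}\alpha\,\mathbf{A}(x,t,\u^q,\nabla u)\cdot\nabla(v_h-u)\,\dx\dt\geq0.
$$
In the limit $h\downarrow0$ the diffusion term converges to $\iint_{\Omega_T}\alpha\,\mathbf{A}(x,t,\u^q,\nabla u)\cdot\nabla(\eta(v-u))\,\dx\dt$ since $v_h-u\to\eta(v-u)$ in $L^2(0,T;H^1(\Omega))$ and $\mathbf{A}(x,t,\u^q,\nabla u)\in L^2(\Omega_T,\R^n)$ by~\eqref{eq:structure}. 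In the time integral, every term passes to the limit by the $L^{q+1}(\Omega_T)$-convergences $\hat u_h\to u$, $\sigma_h\to0$, $\partial_t\sigma_h\to0$, and $\partial_t\mollifytime{g}{h,g_o}=\mollifytime{\partial_t g}{h,0}\to\partial_t g$, except the contribution containing $(1-\eta)\,\u^q\,\partial_t\mollifytime{u}{h,g_o}$ (obtained after writing $\mollifytime{u-g}{h,0}=\mollifytime{u}{h,g_o}-\mollifytime{g}{h,g_o}$). That contribution I would handle exactly as in the proof of Lemma~\ref{lem:weak_subsol}: using $\boldsymbol{\mollifytime{u}{h,g_o}^q}\partial_t\mollifytime{u}{h,g_o}=\tfrac1{q+1}\partial_t|\mollifytime{u}{h,g_o}|^{q+1}$, the sign estimate $\big(\u^q-\boldsymbol{\mollifytime{u}{h,g_o}^q}\big)\partial_t\mollifytime{u}{h,g_o}=\tfrac1h\big(\u^q-\boldsymbol{\mollifytime{u}{h,g_o}^q}\big)\big(u-\mollifytime{u}{h,g_o}\big)\geq0$ (monotonicity of $b\mapsto\boldsymbol b^q$), and integration by parts in $t$ (the boundary contributions vanish since $\alpha(0)=\alpha(T)=0$), one bounds the critical term from above by $\tfrac1{q+1}\iint_{\Omega_T}\alpha'(1-\eta)|\mollifytime{u}{h,g_o}|^{q+1}\,\dx\dt$, which tends to $\tfrac1{q+1}\iint_{\Omega_T}\alpha'(1-\eta)|u|^{q+1}\,\dx\dt$ because $\mollifytime{u}{h,g_o}\to u$ in $L^{q+1}(\Omega_T)$ (here $g_o=u(\cdot,0)$ is used). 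Collecting all estimates, regrouping the $\eta$-terms into $\llangle\partial_t\u^q,\alpha\eta(v-u)\rrangle$, and finally letting $\sigma_h\to0$ yields~\eqref{eq:obstacle-wsol}, and hence~(1) and~(2).

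The main obstacle is the construction of the correction $\sigma_h$: from $\mollifytime{u-g}{h,0}\geq\mollifytime{\psi-g}{h,0}$ one only gets $\hat u_h\geq\psi-D_h$ with $D_h:=\mollifytime{g-\psi}{h,0}-(g-\psi)$, a quantity that tends to $0$ in $L^{q+1}(\Omega_T)$ but need not be small in $L^\infty$ and need not vanish on $\partial\Omega$, so one must produce $\sigma_h\geq(D_h)_+$ that nonetheless lies in $L^2(0,T;H^1_0(\Omega))\cap C([0,T];L^{q+1}(\Omega))$ with $\partial_t\sigma_h\in L^{q+1}(\Omega_T)$ and $\sigma_h\to0$ in $L^2(0,T;H^1(\Omega))$ and $L^{q+1}(\Omega_T)$. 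Reconciling $v_h\geq\psi$ with the lateral boundary condition $v_h-g\in L^2(0,T;H^1_0(\Omega))$ is where the assumption $\psi\in C(\overline{\Omega_T})$ (so that time mollifications of $\psi$ converge to $\psi$ uniformly, Lemma~\ref{lem:time_mollification}(4)), the trace identity $(u-g)|_{S_T}=0$, and $g\geq\psi$ have to be combined carefully; under the stronger hypotheses $\psi\in L^2(0,T;H^1(\Omega))\cap C([0,T];L^{q+1}(\Omega))$ with $\partial_t\psi\in L^{q+1}(\Omega_T)$ this difficulty disappears, since one may then take $v_h=\max\{\hat u_h,\psi\}+\eta\big(v-\max\{\hat u_h,\psi\}\big)$: its difference with $g$ equals $\max\{\mollifytime{u-g}{h,0},\psi-g\}+\eta(\cdots)$, which lies in $L^2(0,T;H^1_0(\Omega))$ because $(\psi-g)|_{S_T}\leq0$, one has $\max\{\hat u_h,\psi\}\geq\psi$ trivially, and $\max\{\hat u_h,\psi\}\to\max\{u,\psi\}=u$ in $L^2(0,T;H^1(\Omega))\cap C([0,T];L^{q+1}(\Omega))$ (using $\nabla u=\nabla\psi$ a.e.\ on $\{u=\psi\}$). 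The second, technical-but-routine, point is the passage to the limit in the time term, which is the standard time-mollification computation for porous medium type equations already performed in Lemma~\ref{lem:weak_subsol}.
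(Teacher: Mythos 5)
Your overall strategy (test the global inequality with comparison maps built from a time mollification of $u$, handle the critical term $\u^q\partial_t\mollifytime{u}{h}$ by the monotonicity/sign argument plus integration by parts, then pass to the limit) is the same as the paper's, and that part of your computation is sound. The genuine gap is exactly the point you flag as ``the main obstacle'' and then leave open: under the actual hypotheses of the lemma ($\psi\in C(\overline{\Omega_T})$ only, no Sobolev regularity of $\psi$), you never construct the correction $\sigma_h$, and as you have specified it the construction is in general impossible. You require $\sigma_h\geq (D_h)_+$ with $\sigma_h(\cdot,t)\in H^1_0(\Omega)$ and $\sigma_h\to 0$, but $D_h=\mollifytime{g-\psi}{h,0}-(g-\psi)$ involves only time averaging, so if $g-\psi$ drops abruptly in time, $(D_h)_+$ can be bounded below by a positive constant on a full neighbourhood of a portion of $S_T$; since $\R^n\setminus\Omega$ has positive density at every boundary point by \eqref{geometry}, no $H^1_0(\Omega)$ function can dominate such a quantity there, so no admissible $\sigma_h$ exists. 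Your fallback construction $v_h=\max\{\hat u_h,\psi\}+\eta(v-\max\{\hat u_h,\psi\})$ only works under the stronger hypotheses $\psi\in L^2(0,T;H^1(\Omega))$, $\partial_t\psi\in L^{q+1}(\Omega_T)$, i.e.\ it proves a different statement than the lemma (the paper mentions that variant in passing, but the lemma is stated for merely continuous $\psi$).

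The paper resolves precisely this tension differently: it keeps the obstacle constraint by adding the \emph{constant} $\|\psi-\mollifytime{\psi}{i}\|_\infty$ (possible because $\psi$ is continuous, so the mollification converges uniformly), and restores the lateral boundary condition by gluing in $g$ near $\partial\Omega$ with a spatial cutoff $\varphi_\eps$, i.e.\ $v_i=\eta v+(\varphi_\eps-\eta)(\mollifytime{u}{i}+\|\psi-\mollifytime{\psi}{i}\|_\infty)+(1-\varphi_\eps)g$. The price is a boundary-layer term of the form $\iint \alpha\,(u-g)\,\mathbf{A}(x,t,\u^q,\nabla u)\cdot\nabla\varphi_\eps\,\dx\dt$, which is killed as $\eps\downarrow 0$ by Hardy's inequality (Lemma~\ref{lem:hardy}), and this is exactly where the geometric density condition \eqref{geometry} enters. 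The fact that your argument never uses \eqref{geometry}, although it is an explicit hypothesis of the lemma, is the symptom of the missing step: the reconciliation of $v\geq\psi$ with $v-g\in L^2(0,T;H^1_0(\Omega))$ for a merely continuous obstacle is the actual content of the proof, and it cannot be delegated to an unspecified $\sigma_h$. (Minor positive remark: your use of zero initial values in $\mollifytime{u-g}{h,0}$ does neatly avoid the paper's careful coupling of the spatial mollification parameter $\delta_i$ with $h_i$ for the convergence $\nabla\mollifytime{u}{i}\to\nabla u$; but this simplification does not touch the main difficulty.)
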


\begin{proof}
Let us denote $\Omega_{\eps} = \{x \in \Omega : \dist(x,\partial \Omega) > \eps\}$. Fix $\eta \in C^1_0(\Omega; [0,1])$ and choose $\eps>0$ so small that $\spt(\eta) \subset \Omega_{\eps}$. Let $\varphi_\eps \in C^\infty_0(\Omega;[0,1])$ such that $\varphi_\eps \equiv 1$ in $\Omega_\eps$ and $|\nabla \varphi_{\eps}| \leq \tfrac{c}{\eps}$ for a numerical constant $c > 0$.
We consider a sequence $(\delta_i)_{i \in \N}$ with $0 < \delta_i \downarrow 0$ as $i \to \infty$ and set $h_i := \delta_i^{2(n+2)}$.
Then, let $R > 0$ be such that $B_\frac{R}{2}(0) \supset \Omega$. We extend $\psi(\cdot,0)$ as a continuous function to $B_R(0) \setminus \overline{\Omega}$ and define $\psi(\cdot,0) \equiv 0$ in $\R^n \setminus B_R(0)$.  
Let us define standard mollifications of these extensions with parameter $\delta_i$ by
$$
	\psi_{o,\delta_i} := \psi(\cdot,0) \ast \phi_{\delta_i},
$$
where $\phi_\delta(x) := \delta^{-n} \phi\left(\frac{x}{\delta}\right)$ denotes a standard mollifier in $\R^n$.
Further, we define mollifications in time
$$
	u_i := \mollifytime{u}{h_i}
	\quad\text{and}\quad
	\psi_i := \mollifytime{\psi}{h_i}
$$
according to \eqref{e.time_moll_g} with initial values $\psi_{o,\delta_i}$.
By construction, we have that $u_i \geq \psi_i$ for all $i \in \N$.
Moreover, by Lemma \ref{lem:time_mollification} we find that $u_i \in C([0,T];L^{q+1}(\Omega)) \cap L^2(0,T;H^1(\Omega))$ with time derivative $\partial_t u_i \in L^{q+1}(\Omega_T)$.
Next, fix any comparison map $v \in K'_{\psi}(\Omega_T)$ and choose
$$
v_i = \eta v + (\varphi_\eps - \eta) (u_i + \| \psi- \psi_i\|_\infty) +  (1 - \varphi_\eps) g.
$$
Observe that $v_i \in K'_{\psi,g}(\Omega_T)$. In particular, we have that $v_i \geq \psi$, since $v \geq \psi$, $u_i + \| \psi - \psi_i\|_\infty \geq \psi$ and $g \geq \psi$. Furthermore, there clearly holds $(v_i - g)(\cdot,t) \in H^1_0(\Omega)$ for a.e. $t \in (0,T)$ by definition.

In order to treat the divergence part in the variational inequality, we claim that
\begin{equation}
	\nabla u_i \to \nabla u \quad\text{in $L^2(0,T;H^1(\Omega))$ as $i \to \infty$.}
	\label{eq:aux_convergence1}
\end{equation}
To this end, consider the mollification according to \eqref{e.time_moll_g} with zero initial values, i.e.~$\tilde{u}_i := u_i - e^{-\frac{t}{h_i}} \psi_{o,\delta_i}$.
By Lemma~\ref{lem:time_mollification}, we find that
\begin{equation}
	\nabla\tilde{u}_i \to \nabla u \quad\text{in $L^2(0,T;H^1(\Omega))$ as $i \to \infty$.}
	\label{eq:aux_convergence2}
\end{equation}
Further, observe that
$$
	\iint_{\Omega_T} \Big|\nabla \Big(e^{-\frac{t}{h_i}} \psi_{o,\delta_i} \Big)\Big|^2 \,\dx\dt
	=
	\int_0^T e^{-\frac{2t}{h_i}} \,\dt \int_\Omega |\nabla \psi_{o,\delta_i}|^2 \,\dx
	\leq
	\tfrac{h_i}{2} \int_\Omega |\nabla \psi_{o,\delta_i}|^2 \,\dx.
$$
Estimating the integral on the right-hand side of the preceding inequality by Young's inequality for convolutions, we conclude that
\begin{align*}
	\int_\Omega |\nabla \psi_{o,\delta_i}|^2 \,\dx
	\leq
	\|\psi_o\|_{L^1(B_R(0))}^2 \|\nabla \phi_{\delta_i}\|_{L^2(\R^n)}^2
	\leq
	\delta_i^{-(n+2)} \|\psi_o\|_{L^1(B_R(0))}^2 \|\nabla \phi\|_{L^2(\R^n)}^2.
\end{align*}
Combining the preceding two inequalities and recalling the definition of $\delta_i$, we obtain that
\begin{align*}
	\iint_{\Omega_T} \Big|\nabla \Big(e^{-\frac{t}{h_i}} \psi_{o,\delta_i} \Big)\Big|^2 \,\dx\dt
	&\leq
	\tfrac{1}{2} \|\psi_o\|_{L^1(B_R(0))}^2 \|\nabla \phi\|_{L^2(\R^n)}^2 h_i \delta_i^{-(n+2)} \\
	&=
	\tfrac{1}{2} \|\psi_o\|_{L^1(B_R(0))}^2 \|\nabla \phi\|_{L^2(\R^n)}^2 \sqrt{h_i}
	\to 0
\end{align*}
in the limit $i \to \infty$.
Together with \eqref{eq:aux_convergence2} this implies \eqref{eq:aux_convergence1}.

By observing that $1 = \eta + (\varphi_\eps - \eta) + (1- \varphi_\eps)$ everywhere in $\Omega$, for the divergence part we have
\begin{align*}
\iint_{\Omega_T} &\alpha \mathbf{A}(x,t,\u^q,\nabla u) \cdot \nabla (v_i-u) \, \d x\d t \\
&= 
\iint_{\Omega_T} \alpha \mathbf{A}(x,t,\u^q,\nabla u) \cdot \nabla (\eta (v-u) ) \, \d x\d t \\
&\phantom{+}+ \iint_{\Omega_T} \alpha \mathbf{A}(x,t,\u^q,\nabla u) \cdot \nabla [ (\varphi_\eps -\eta) (u_i-u + \| \psi - \psi_i \|_\infty) ] \, \d x\d t \\
&\phantom{+} + \iint_{\Omega_T} \alpha \mathbf{A}(x,t,\u^q,\nabla u) \cdot \nabla ((1-\varphi_\eps) (g-u) ) \, \d x\d t  \\
&\xrightarrow{i \to \infty} \iint_{\Omega_T} \alpha \mathbf{A}(x,t,\u^q,\nabla u) \cdot \nabla (\eta (v-u) ) \, \d x\d t \\
&\phantom{+} + \iint_{\Omega_T} \alpha \mathbf{A}(x,t,\u^q,\nabla u) \cdot \nabla ((1-\varphi_\eps) (g-u) ) \, \d x\d t,
\end{align*}
by \eqref{eq:aux_convergence1} and since $u \in L^2(0,T;H^1(\Omega))$, $\psi \in C(\overline{\Omega_T})$ and $|\mathbf{A}(x,t,\u^q,\nabla u)| \in L^2(\Omega_T)$ together with Lemma~\ref{lem:time_mollification} (1) and (4). For the last term on the right hand side we obtain that
\begin{align*}
\iint_{\Omega_T} &\alpha \mathbf{A}(x,t,\u^q,\nabla u) \cdot \nabla ((1-\varphi_\eps) (g-u) ) \, \d x\d t \\
&= \iint_{\Omega_T} \alpha (1-\varphi_\eps) \mathbf{A}(x,t,\u^q,\nabla u) \cdot \nabla (g-u) \, \d x\d t \\
&\phantom{+} + \iint_{\Omega_T} \alpha (u-g) \mathbf{A}(x,t,\u^q,\nabla u) \cdot \nabla \varphi_\eps   \, \d x\d t.
\end{align*}
The first term on the right hand side will vanish when $\eps \downarrow 0$ by Lebesgue's dominated convergence theorem since $\varphi_\eps \uparrow \chi_{\Omega}$ pointwise. For the second term we have that
\begin{align*}
 \iint_{\Omega_T} &\alpha (u-g) \mathbf{A}(x,t,\u^q,\nabla u) \cdot \nabla \varphi_\eps   \, \d x\d t \\
&\leq
C_1\iint_{\Omega_T} \alpha |u-g| |\nabla u| |\nabla \varphi_\eps|  \, \d x\d t \\
&\leq 
c(C_1) \|\alpha\|_\infty \|\nabla u \chi_{\Omega \setminus \Omega_\eps}\|_{L^2} \left( \iint_{(\Omega \setminus \Omega_\eps) \times (0,T)} \frac{|u-g|^2}{\eps^2}  \, \d x\d t  \right)^\frac12 \\
&\leq 
c(C_1)\|\alpha\|_\infty \|\nabla u \chi_{\Omega \setminus \Omega_\eps}\|_{L^2} \left( \iint_{(\Omega \setminus \Omega_\eps) \times (0,T)} \frac{|u-g|^2}{\dist(x,\partial \Omega)^2}  \, \d x\d t  \right)^\frac12 \\
&\leq 
c(C_1)\|\alpha\|_\infty \|\nabla u \chi_{\Omega \setminus \Omega_\eps}\|_{L^2} \left( \iint_{\Omega\times (0,T)} \frac{|u-g|^2}{\dist(x,\partial \Omega)^2}  \, \d x\d t  \right)^\frac12 \\
&\leq 
c(C_1,n,\alpha_*)\|\alpha\|_\infty \|\nabla u \chi_{\Omega \setminus \Omega_\eps}\|_{L^2} \left( \iint_{\Omega\times (0,T)} |\nabla(u-g)|^2  \, \d x\d t  \right)^\frac12
\xrightarrow{\eps \downarrow 0} 0,
\end{align*}
by using Hardy's inequality from Lemma~\ref{lem:hardy} in the last line. The convergence holds by the dominated convergence theorem, since $|\nabla u| \in L^2(\Omega_T)$ and $\chi_{\Omega \setminus \Omega_\eps} \xrightarrow{\eps \downarrow 0} 0$ pointwise everywhere in $\Omega$.

Furthermore, using that
$$
	\alpha (\varphi_\epsilon - \eta) \big( \u^q - \u_i^q \big) \partial_t u_i
	=
	\tfrac{1}{h_i}\alpha (\varphi_\epsilon - \eta) \big( \u^q - \u_i^q \big) (u-u_i)
	\geq 0
$$
by construction of the mollification and that $\u_i^q \partial_t u_i = \partial_t |u_i|^{q+1}$, integrating by parts yields the estimate
\begin{align*}
- &\iint_{\Omega_T} \alpha \u^q \partial_t v_i \, \d x \d t \\
&= 
- \iint_{\Omega_T} \alpha \u^q\left( \eta \partial_t v + (\varphi_\eps - \eta) \partial_t u_i +  (1 - \varphi_\eps) \partial_t g  \right) \, \d x \d t  \\
&\leq - \iint_{\Omega_T} \alpha \eta\u^q  \partial_t v  + \tfrac{1}{q+1} \alpha (\varphi_\eps - \eta) \partial_t |u_i|^{q+1}\, \d x \d t \\
&\phantom{+} - \iint_{\Omega_T} \alpha (1 - \varphi_\eps) \partial_t g  \, \d x \d t  \\
&\xrightarrow{i \to \infty} \iint_{\Omega_T}- \alpha \eta\u^q  \partial_t v  + \tfrac{1}{q+1} \alpha' (\varphi_\eps - \eta)  |u|^{q+1}\,  \d x \d t  - \iint_{\Omega} \alpha (1 - \varphi_\eps) \partial_t g  \, \d x \d t. 
\end{align*}
Furthermore, we have 
\begin{align*}
\iint_{\Omega_T} \alpha' \u^q v_i \, \d x\d t \xrightarrow{i \to \infty} \iint_{\Omega_T} \alpha' \eta \u^q v + \alpha' (\varphi_\eps - \eta)|u|^{q+1}  + \alpha' (1-\varphi_\eps) \u^q g  \, \d x \d t.
\end{align*}
Finally, by inserting the preceding estimates into the time term and passing to the limit $\eps \downarrow 0$ by means of the dominated convergence theorem, we conclude the claim of the lemma.
\end{proof}

By combining Lemmas~\ref{lem:initial-values-impl} and~\ref{lem:global-is-local} we arrive at the following result.

\begin{lemma} \label{lem:global-is-local-final}
Let $\Omega \subset \R^n$ be a bounded open set which satisfies~\eqref{geometry}. Suppose that $\psi \in C(\overline{\Omega_T})$ and that $g$ and $g_o$ satisfy the assumptions in Definition~\ref{def:obst-gsol}. Let $u$ be a global weak solution according to the aforementioned definition. Then $u$ is a weak solution to the obstacle problem according to Definition~\ref{d.obstacle-wsol} satisfying (1), (2) and (3).
\end{lemma}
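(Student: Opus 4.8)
The plan is to simply assemble the two preceding lemmas, since all the substantive work has already been carried out. First I would invoke Lemma~\ref{lem:global-is-local}: under the standing assumptions that $\Omega$ satisfies~\eqref{geometry} and $\psi \in C(\overline{\Omega_T})$, and with $g$, $g_o$ as in Definition~\ref{def:obst-gsol}, any global weak solution $u$ in the sense of Definition~\ref{def:obst-gsol} is a weak solution in the sense of Definition~\ref{d.obstacle-wsol} satisfying properties (1) and (2). This yields in particular that $u \in K_{\psi,g}(\Omega_T)$ and that the variational inequality~\eqref{eq:obstacle-wsol} holds for every admissible cut-off function $\alpha \in W^{1,\infty}_0([0,T];\R_{\geq 0})$, every $\eta \in C^1_0(\Omega,\R_{\geq 0})$ and all comparison maps $v \in K'_\psi(\Omega_T) \cap C([0,T];L^{q+1}(\Omega))$.

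It then remains to verify property (3) of Definition~\ref{d.obstacle-wsol}, namely that $u$ attains the initial datum $g_o$ in the sense of~\eqref{eq:obstacle-initial-q+1}. For this I would apply Lemma~\ref{lem:initial-values-impl}, which states precisely that a global weak solution according to Definition~\ref{def:obst-gsol} satisfies~\eqref{eq:obstacle-initial-q+1} (using the estimates~\eqref{eq:I-estimate} resp.~\eqref{eq:I-estimate-deg}, the comparison map $g$, and $g \in C([0,T];L^{q+1}(\Omega))$ with $g(\cdot,0) = g_o$). Combining this with the conclusion of Lemma~\ref{lem:global-is-local} shows that $u$ is a weak solution to the obstacle problem according to Definition~\ref{d.obstacle-wsol} satisfying (1), (2) and (3), which is exactly the assertion.

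Since both ingredients are already established, there is no genuine obstacle; the only point requiring care is that the hypotheses of Lemma~\ref{lem:global-is-local-final} — i.e.~\eqref{geometry}, $\psi \in C(\overline{\Omega_T})$, and the assumptions on $g$ and $g_o$ from Definition~\ref{def:obst-gsol} — match exactly what is needed by both Lemma~\ref{lem:initial-values-impl} (which needs no geometric condition) and Lemma~\ref{lem:global-is-local} (which uses~\eqref{geometry} only through Hardy's inequality, Lemma~\ref{lem:hardy}), so that the combination is legitimate. I would therefore phrase the proof as a one-line citation of Lemmas~\ref{lem:initial-values-impl} and~\ref{lem:global-is-local}.
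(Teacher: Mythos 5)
Your proposal is correct and matches the paper's argument exactly: the paper also obtains this lemma simply by combining Lemma~\ref{lem:initial-values-impl} (which yields property (3), i.e.~\eqref{eq:obstacle-initial-q+1}) with Lemma~\ref{lem:global-is-local} (which yields properties (1) and (2)). No further comment is needed.
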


\begin{remark}
Observe that the assumption~\eqref{geometry} is not the weakest possible for $\Omega$ in Lemmas~\ref{lem:global-is-local} and~\ref{lem:global-is-local-final}. Both hold true under the assumption that $\R^n\setminus \Omega$ is uniformly $2$-thick, see also Remark~\ref{rem:hardy}.
\end{remark}

\end{document}